\documentclass[reqno]{amsart}
\usepackage[utf8]{inputenc}
\usepackage{amssymb, amsmath, amsthm, color, enumerate, mathabx, mathrsfs}
\usepackage{bbm}
\usepackage{pdfpages}
\usepackage{tcolorbox}
\usepackage{esint} % for \fint
\usepackage{marginnote}
\usepackage{thmtools}
\usepackage{mathtools}

\usepackage{hyperref}

\numberwithin{equation}{section}
\theoremstyle{plain}
\newtheorem{theorem}{Theorem}[section]
\newtheorem{lemma}[theorem]{Lemma}
\newtheorem{proposition}[theorem]{Proposition}

\newtheorem{conjecture}[theorem]{Conjecture}

\theoremstyle{definition}
\newtheorem{definition}[theorem]{Definition}

\newtheorem{remark}[theorem]{Remark}

\RenewCommandCopy{\theHtheorem}{\thetheorem}
\RenewCommandCopy{\theHequation}{\theequation}
\RenewCommandCopy{\theHlemma}{\thelemma}
\RenewCommandCopy{\theHdefinition}{\thedefinition}
\RenewCommandCopy{\theHproposition}{\theproposition}

\newcommand{\N}{\mathbb{N}}

\newcommand{\bbP}{\mathbb{P}}

\newcommand{\R}{\mathbb{R}}

\newcommand{\T}{\mathbb{T}}

\newcommand{\Z}{\mathbb{Z}}

\newcommand{\cC}{\mathcal{C}}

\newcommand{\cE}{\mathcal{E}}

\newcommand{\cI}{\mathcal{I}}
\newcommand{\cJ}{\mathcal{J}}

\newcommand{\cT}{\mathcal{T}}

\newcommand{\fkA}{\mathfrak{A}}

\newcommand{\fkG}{\mathfrak{G}}

\newcommand{\fkM}{\mathfrak{M}}

\newcommand{\bdA}{\mathbf{A}}

\newcommand{\bdC}{\mathbf{C}}
\newcommand{\bdD}{\mathbf{D}}

\newcommand{\bdT}{\mathbf{T}}

\newcommand{\ud}{\mathrm{d}}
\def\inn#1#2{\langle#1,#2\rangle}

\newcommand{\supp}[1]{\mathrm{supp}_{#1}\,}

\newcommand{\dist}{\mathrm{dist}}
\newcommand{\high}{\mathrm{high}}
\newcommand{\low}{\mathrm{low}}

\newcommand{\wt}[1]{\widetilde{#1}}

\usepackage[normalem]{ulem}

\begin{document}

\title[Space curve $L^2$ Fourier restriction]{Remarks on space curve $L^2$ Fourier restriction}

\date{}

\begin{abstract} We re-examine and consider variants of the Fourier restriction theorem for space curves. In particular, we introduce a Stein--Tomas-type argument which recovers the sharp $L^2$ restriction theorem from classical oscillatory integral estimates, in the spirit of Arkhipov--Chubarikov--Karatsuba. In addition, we consider certain square function estimates which can be thought of as partial progress towards a space curve variant of the orthonormal Fourier restriction theorem of Frank--Sabin. These estimates are obtained by reducing the problem to a Kakeya-type maximal inequality, which we analyse using a formulation of the Bourgain--Guth broad/narrow method.
\end{abstract}

\author[J-C. Cuenin]{ Jean-Claude Cuenin }
\address{Jean-Claude Cuenin: Department of Mathematical Sciences, Loughborough University, Loughborough, Leicestershire, LE11 3TU, United Kingdom}
\email{J.Cuenin@lboro.ac.uk}

\author[J. Denson]{Jacob Denson}
\address{Jacob Denson: School of Mathematics and Maxwell Institute for Mathematical Sciences, James Clerk Maxwell Building, The King’s Buildings, Peter Guthrie Tait
Road, Edinburgh, EH9 3FD, UK}
\email{Jacob.Denson@ed.ac.uk}

\author[J. Hickman]{ Jonathan Hickman }
\address{Jonathan Hickman: School of Mathematics and Maxwell Institute for Mathematical Sciences, James Clerk Maxwell Building, The King's Buildings, Peter Guthrie Tait Road, Edinburgh, EH9 3FD, UK.}
\email{jonathan.hickman@ed.ac.uk}

\maketitle

%%%%%%%%%%%%%%%%%%%%%%%%%%%%%%%%%%%%%%%%%%%%%%%%%%%%%%%%%%%%%%%%%%%%%%%%%%%%%%%%%%%%%%%%%%%%%%%%

%    Introduction

%%%%%%%%%%%%%%%%%%%%%%%%%%%%%%%%%%%%%%%%%%%%%%%%%%%%%%%%%%%%%%%%%%%%%%%%%%%%%%%%%%%%%%%%%%%%%%%%

\section{Introduction}%\label{sec: intro}

%%%%%%%%%%%%%%%%%%%%%%%%%%%%%%%%%%%%%%%%%%%%%%%%%%%%%%%%%%%%%%%%%%%%%%%%%%%%%%%%%%%%%%%%%%%%%%%%

%   Fourier restriction for space curves

%%%%%%%%%%%%%%%%%%%%%%%%%%%%%%%%%%%%%%%%%%%%%%%%%%%%%%%%%%%%%%%%%%%%%%%%%%%%%%%%%%%%%%%%%%%%%%%%

\subsection{Fourier restriction for space curves} \label{sub:fourier_restriction_for_space_curves} We say a $C^{\infty}$ function $\gamma \colon [-2,2] \to \R^n$ is (a parametrisation of) a \textit{smooth, non-degenerate curve} if
\begin{equation}\label{eq: nondegenerate}
    |\gamma^{(1)}(s) \wedge \cdots \wedge \gamma^{(n)}(s)| \neq 0\quad \textrm{for all $s \in [-2,2]$.}
\end{equation}
Fix $\alpha \in C_c^\infty(\R)$ non-negative and not identically zero with $\supp{} \alpha \subseteq [-1,1]$, and define the \textit{Fourier extension operator}
\begin{equation} \label{eq:2eio1j2oiej12oilabel}
    \cE_{\gamma} g(x) \coloneq \int_{\R} e^{2 \pi i \gamma(t) \cdot x} g(t) \alpha(t)\,\ud t \qquad \textrm{for all $g \in L^1(\R) + L^{\infty}(\R)$;} 
\end{equation} 
The classical Fourier restriction theorem for space curves, due to Drury~\cite{Drury1985}, determines the $L^p$-$L^q$ mapping properties of $\cE_{\gamma}$. In particular, $\cE_\gamma$ is bounded from $L^p(\R)$ to $L^q(\R^n)$ if and only if
\begin{equation}\label{eq: Drury exp}
   \frac{n^2 + n + 2}{2} < q \leq \infty \qquad \textrm{and} \qquad q = \frac{n(n+1)}{2} p.
\end{equation}
Drury's proof \cite{Drury1985} combines a multilinear argument from \cite{Prestini1978, Christ1985} with an iteration procedure. 

In this note, we revisit the Fourier restriction theorem for space curves, focusing on $L^2$-based results: that is, we consider the $L^2$-$L^q$ mapping properties of $\cE_{\gamma}$. We are interested in two problems, discussed further in the following subsections. Namely, we obtain a new proof of the optimal $L^2$-$L^q$ estimate for $\cE_{\gamma}$, and prove new square function estimates for the Fourier extension of orthonormal wave packets.

%%%%%%%%%%%%%%%%%%%%%%%%%%%%%%%%%%%%%%%%%%%%%%%%%%%%%%%%%%%%%%%%%%%%%%%%%%%%%%%%%%%%%%%%%%%%%%%%

%   $L^2$ Fourier restriction via Arkhipov--Chubarikov--Karatsuba

%%%%%%%%%%%%%%%%%%%%%%%%%%%%%%%%%%%%%%%%%%%%%%%%%%%%%%%%%%%%%%%%%%%%%%%%%%%%%%%%%%%%%%%%%%%%%%%%

\subsection{\texorpdfstring{$L^2$ Fourier restriction via Arkhipov--Chubarikov--Karatsuba Estimates}{L2 Fourier restriction via Arkhipov-Chubarikov-Karatsuba Estimates}}\label{subsec: intro rest via ACK}

Given an interval $J \subseteq [-1,1]$, define
\begin{equation}\label{eq: H functional}
    H_{\gamma; J}(x) \coloneq \inf_{t \in J} \max_{1 \leq r \leq n} | \gamma^{(r)}(t) \cdot x |^{1/r}
\end{equation}
and suppose $b \in C_c^1(\R)$ satisfies $\supp{} b \subseteq J$. 
By a variant of the Arkhipov--Chubarikov--Karatsuba estimate \cite{ACK_book, ACK1979} (see Theorem~\ref{thm: ACK} below), the inequality\footnote{Given quantities $A,B \geq 0$, and a family of parameters $\theta$, we write $A \lesssim_\theta B$ or $B \gtrsim_{\theta} A$ to indicate that $A \leq C_{\theta} B$ for a constant $C_{\theta} > 0$ depending only on the dimension $n$ and the parameters $\theta$. We write $A \sim_{\theta} B$ to indicate $A \lesssim_{\theta} B$ and $B \lesssim_{\theta} A$. We let $O_{\theta}(B)$ denote a constant $A$ such that $A \lesssim_\theta B$, though different occurences of $O_\theta(B)$ will denote different constants.}
\begin{equation}\label{eq: ACK intro}
  \bigg|\int_J e^{2 \pi i x \cdot \gamma(t)} b(t)\,\ud t\bigg| \lesssim \|b'\|_{L^1(\R)} H_{\gamma, J}(x)^{-1}. 
\end{equation}
holds for all $x \in \R^n$. The oscillatory integral estimate \eqref{eq: ACK intro} is very sharp, in the sense that the reverse inequality also often holds (see \cite[pp.16-17]{ACK_book} for the precise details). This observation can be used to show the necessity of the first condition in \eqref{eq: Drury exp}. Indeed, it is possible to use \eqref{eq: ACK intro} (and, moreover, the accompanying reverse estimate) to show, for the indicator function $\chi_{[-1,1]}$, that
\begin{equation}\label{eq: ACK nec}
    \|\cE_{\gamma} \chi_{[-1,1]} \|_{L^q(\R^n)} < \infty \qquad \iff \qquad q > \frac{n^2 + n + 2}{2};
\end{equation}
see \cite[p.21, Theorem 1.3]{ACK_book} or the recent work of dall'Ara--Wright \cite{dAW} (which also provides an interesting new perspective on the Arkhipov--Chubarikov--Karatsuba theory) for details.

It is natural (see Remark~\ref{rmk: hypersurface} below) to ask whether the Arkhipov--Chubarikov--Karatsuba estimate \eqref{eq: ACK intro}, or its proof, can also be used to prove positive results for $\cE_{\gamma}$. Here we consider the $L^2$ restriction theory for $\cE_{\gamma}$ and develop a Stein--Tomas \cite[Chapter IX, \S2.1]{Stein_book} argument in this setting. In particular, define the exponent 
\begin{equation}\label{eq: crit q}
    q_n \coloneq n(n+1).
\end{equation}
By adapting the argument used to prove \eqref{eq: ACK intro}, in \S\ref{sec: rest via ACK} we recover the following $L^2$-based Fourier restriction inequality. 
\begin{theorem}\label{thm: L2 endpoint} Let $n \geq 2$ and $\gamma \colon [-2,2] \to \R^n$ be a smooth, non-degenerate curve. For $q \geq q_n$, we have
\begin{equation*}
  \|\cE_{\gamma} g\|_{L^{q_n}(\R^n)} \lesssim_{\gamma} \|g\|_{L^2(\R)}  \qquad \textrm{for all $g \in L^2(\R)$.}
\end{equation*}
\end{theorem}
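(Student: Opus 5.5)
We run a Stein--Tomas $TT^*$ argument. By duality it suffices to bound the operator $f \mapsto \cE_\gamma(\cE_\gamma^* f)$ from $L^{q_n'}(\R^n)$ to $L^{q_n}(\R^n)$. This operator is convolution with the measure $\widehat{\sigma}$, where
\begin{equation*}
  \widehat{\sigma}(x) = \int e^{2\pi i \gamma(t)\cdot x}\alpha(t)^2\,\ud t .
\end{equation*}
The plain decay $|\widehat\sigma(x)|\lesssim |x|^{-1/n}$ is far too weak, since it would give only the Stein--Tomas exponent for a curve with one-dimensional decay. So we keep the anisotropic information encoded in the functional $H_{\gamma;J}$ of \eqref{eq: H functional}.

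\textbf{Step 1: decomposition.} Decompose the kernel dyadically according to the size of the phase derivatives. First, by a partition of unity in $t$, we may localise $\alpha^2$ to short intervals $J$ on which $\gamma$ is a small perturbation of an affine image of the moment curve. For dyadic $R_1,\dots,R_n\geq 1$, consider the pieces of $\widehat\sigma$ on which $|\gamma^{(r)}(t)\cdot x|\sim R_r$ for all $t\in J$. It is cleaner to phrase this in frequency, decomposing the curve measure itself. Cover the $1/\lambda$-neighbourhood of $\gamma(J)$ by anisotropic slabs adapted to the Frenet frame, of dimensions $\lambda^{-1}\times\lambda^{-2}\times\cdots\times\lambda^{-n}$ after rescaling. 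The resulting kernel $K_\lambda$ satisfies:
\begin{enumerate}[(i)]
  \item $\|\widehat{K_\lambda}\|_{L^\infty}\lesssim \lambda^{a}$, an $L^1\to L^\infty$ bound on the Fourier side giving $L^2\to L^2$;
  \item $\|K_\lambda\|_{L^\infty}\lesssim \lambda^{-b}$, an $L^1\to L^\infty$ bound on the spatial side.
\end{enumerate}
The $L^\infty$ bound in (ii) comes from the ACK estimate \eqref{eq: ACK intro}: on the region where $H_{\gamma;J}(x)\sim\lambda$ the kernel is bounded by $\lambda^{-1}$. The measure of that region is governed by the volume $\sim\lambda^{n(n+1)/2}$ of the sets $\{x: |\gamma^{(r)}(t)\cdot x|\lesssim\lambda^{r}\ \forall r\}$, unioned over $t\in J$.

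\textbf{Step 2: interpolation.} Interpolating (i) and (ii) gives $\|f*K_\lambda\|_{L^q}\lesssim \lambda^{-\epsilon(q)}\|f\|_{L^{q'}}$. The exponent should vanish exactly at $q=q_n=n(n+1)=2\cdot\frac{n(n+1)}{2}$. This is the expected Stein--Tomas numerology, with homogeneous dimension $D=n(n+1)/2$ and curve of dimension $1$: the exponent $q=2(D)/(1)$ when taking $TT^*$ at $L^2$.

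\textbf{Step 3: summing at the endpoint.} Summing in $\lambda$ gives $q>q_n$ immediately. The endpoint $q=q_n$ requires either Bourgain's interpolation trick, giving restricted weak type, or a Stein-type analytic interpolation. Analytic interpolation embeds the kernels into a family $\sum_\lambda \lambda^{z}K_\lambda$ and uses Lorentz-space or real-interpolation arguments to upgrade to strong type. Since the claim is strong type $L^2\to L^{q_n}$, I would follow Stein's complex interpolation. Write $\widehat\sigma$ as a member of an analytic family $\widehat{\sigma_z}$ with the following properties:
\begin{itemize}
  \item at $\Re z = -1$, the multiplier $\sigma_z$ is bounded, so the operator maps $L^2\to L^2$;
  \item at the other endpoint, the kernel is bounded pointwise, so the operator maps $L^1\to L^\infty$.
\end{itemize}
The pointwise kernel bound follows by adapting the proof of \eqref{eq: ACK intro}: integrate by parts in $t$ against a weight $(\text{dist})^{z}$ measuring distance to the curve in the anisotropic sense.

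\textbf{Main obstacle.} The main difficulty is Step 1/3's kernel estimate. We must show that the anisotropically smoothed kernel satisfies a uniform $L^\infty$ bound with exactly the right power. This requires a refinement of the ACK argument that tracks all $n$ derivatives simultaneously, rather than just the single best one in \eqref{eq: H functional}. Concretely, one must bound
\begin{equation*}
  \Big|\int e^{2\pi i x\cdot\gamma(t)}b(t)\,\ud t\Big|\lesssim \min_r |\gamma^{(r)}(t_x)\cdot x|^{-1/r}
\end{equation*}
uniformly and control the level sets of $H_{\gamma;J}$ via the volume $\lambda^{n(n+1)/2}$. I would first establish the level-set estimate
\begin{equation*}
  |\{x:H_{\gamma;J}(x)\leq\lambda\}\cap B_R|\lesssim \lambda^{n(n+1)/2-1}R
\end{equation*}
by parametrising over $t$ and using nondegeneracy \eqref{eq: nondegenerate}, then combine it with \eqref{eq: ACK intro} in the interpolation.
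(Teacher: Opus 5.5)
Your outline has the paper's overall shape: $TT^*$, a single-scale split of the kernel into a piece with an $L^2$ bound and a piece with pointwise bound $\lambda^{-1}$, and the numerology $q=2\cdot\frac{n(n+1)}{2}$. But the two estimates that carry the proof are missing, and the tools you name for them give the wrong exponent.

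\textbf{The two single-scale estimates.} To close at $q_n$ you need exactly $b=1$ and $a=\frac{n(n+1)}{2}-1$.

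Suppose $K_\lambda$ is $\widehat{\sigma}$ cut off in $x$ alone to $\{H_{\gamma;J}\sim\lambda\}$. Then ACK gives $b=1$. But ACK plus level-set volumes only yields
\[
\|\widehat{K_\lambda}\|_{L^\infty}\le\|K_\lambda\|_{L^1}\lesssim\lambda^{-1}|\{H_{\gamma;J}\sim\lambda\}|.
\]
The union over $t\in J$ of the boxes $\{|\gamma^{(r)}(t)\cdot x|\lesssim\lambda^r \ \forall r\}$ has volume $\sim\lambda^{\frac{n(n+1)}{2}+1}$, not $\lambda^{\frac{n(n+1)}{2}}$. There are $\sim\lambda$ such boxes, one per $t$-interval of length $\lambda^{-1}$, and they are essentially disjoint away from the origin; this is exactly the fact behind \eqref{eq: ACK nec}. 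So you get $a=\frac{n(n+1)}{2}$, and the interpolation only closes at $q=n^2+n+2$, which is not sharp.

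The missing factor $\lambda^{-1}$ comes from cancellation. The paper gets it by localising jointly: $t$ to intervals $I_\mu$ of length $(C\lambda)^{-1}$, and $x$ to the box $\{|\gamma^{(r)}(t_\mu)\cdot x|\lesssim\lambda^r\}$ attached to the same $\mu$. Each piece then has Fourier transform of height $\lambda^{\frac{n(n+1)}{2}-1}$ concentrated near $\gamma(I_\mu)$, with bounded overlap in $\mu$ (Proposition~\ref{prop: low}).

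Your slab covering gestures at this, but then the complementary kernel is no longer $\widehat{\sigma}$ restricted to a region. Its amplitude $t\mapsto\sum_\mu\eta^\ell_\mu(x)\zeta_\mu(t)|\alpha(t)|^2$ depends on $x$ through $\mu$-dependent cutoffs. To invoke ACK or van der Corput you must show its derivative has $L^1$ norm $O(1)$ uniformly in $x$; a priori it can be $O(\lambda)$. The paper does this in Proposition~\ref{prop: high}: it splits $t$ into $O(1)$ intervals on which the relevant sequences in $\mu$ are monotone (via Lemma~\ref{lem: eta cutoff}), then applies a discrete Leibniz rule. The ``main obstacle'' you identify, a sharper pointwise ACK bound plus a level-set estimate for $H_{\gamma;J}$ on balls, addresses neither point.

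\textbf{The endpoint.} Even granting (i) and (ii) with the right exponents, choosing $\lambda$ in terms of $|E|,|F|$ gives only restricted weak type at $q_n$ (Proposition~\ref{prop: weak endpoint}). Theorem~\ref{thm: L2 endpoint} is a strong-type bound, and your analytic family is never constructed. An anisotropic distance to a curve is not smooth, and nothing guarantees that a power of it has $\sigma$ as a residue. Integrating by parts in $t$ also does not bound $\widehat{\sigma_z}$, which is an integral over $\R^n$.

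If you instead take $T_z=\sum_\lambda\lambda^{z}K_\lambda*$, the three-lines argument needs
\[
\sup_\xi\sum_\lambda\lambda^{1-\frac{n(n+1)}{2}}|\widehat{K_\lambda}(\xi)|\lesssim1
\quad\text{and}\quad
\sup_x\sum_\lambda\lambda|K_\lambda(x)|\lesssim1 .
\]
Bounds (i) and (ii) only make each term $O(1)$, so these cross-scale estimates are extra information you would have to prove.

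The paper avoids this with a Bak--Seeger argument:
\begin{enumerate}[(1)]
\item It dominates $\int|m^\lambda_{\low}||\widehat{\chi}_E|^2$ by an average of restriction integrals over the perturbed curves $\gamma^\lambda(\,\cdot\,;\xi)$ (Lemma~\ref{lem: foliate}).
\item For $|\xi|\le c\lambda$ these curves lie in $\fkG(2B)$, so the weak-type endpoint applies to them uniformly; the region $|\xi|>c\lambda$ is handled by the decay factor.
\item This yields two off-diagonal restricted weak-type estimates for $U_\gamma$.
\item Marcinkiewicz interpolation between them gives the strong-type bound.
\end{enumerate}
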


Comparing with \eqref{eq: Drury exp}, we see this is sharp in the sense that the $q_n$ exponent cannot be replaced with any $q < q_n$. We emphasise that Theorem~\ref{thm: L2 endpoint} is not new, and indeed has been known since Prestini's work \cite{Prestini1978} from the 1970s. The main interest here is in the method of proof. 

The classical Stein--Tomas argument (see, for instance, \cite[Chapter 8, \S5]{SS_book} or \cite[Chapter IX, \S2.1]{Stein_book}) is a powerful tool for proving $L^2$-based Fourier restriction estimates, exploiting decay of the (inverse) Fourier transform of the underlying measure supported on the surface. In the setting of non-degenerate curves, a naive application of the Stein--Tomas argument gives very poor results, as the Fourier transform of the measure underlying $\cE_\gamma$ does not decay uniformly in all directions.

% $\widecheck{\sigma}_\gamma$, where $\sigma_\gamma$ is the measure supported on $\gamma$ such that $\cE_\gamma (\cE_\gamma)^*$ is given by convolution against $\widecheck{\sigma}_\gamma$. 

Recently, Carnovale, Fraser and de Orellana \cite{CFdO} studied, \textit{inter alia}, the Stein--Tomas argument for space curves. They considered a quantity called the \textit{Fourier spectrum}, which measures average decay, in a certain sense. In the setting of nondegenerate curves, the Fourier spectrum can be computed by appeal to the Arkhipov--Chubarikov--Karatsuba estimate \eqref{eq: ACK intro}. Using this, the authors of \cite{CFdO} were able to run a variant of the Stein--Tomas argument to establish $L^2$ bounds for $\cE_{\gamma}$. Whilst the argument of \cite{CFdO} does improve over the naive application of the Stein--Tomas method, the bounds obtained in \cite{CFdO} via the Fourier spectrum are still very far from optimal. By contrast, our argument proving Theorem~\ref{thm: L2 endpoint} achieves an optimal $L^2$ result via a Stein--Tomas-type approach. 

An alternative proof of Theorem~\ref{thm: L2 endpoint} in the spirit of the classical Stein--Tomas argument, not including the endpoint $L^2$-$L^{q_n}$ bound, can be derived from the methods of the sister paper \cite{CDH}. The approach used here, however, is much more direct than that in \cite{CDH}. It also has the advantage of explicitly relating the $L^2$ theory of Fourier extension to curves to the Arkhipov--Chubarikov--Karatsuba estimate \eqref{eq: ACK intro} used to show the necessary condition \eqref{eq: ACK nec}.

\begin{remark}\label{rmk: hypersurface} In order to better understand the underlying philosophy here, it is useful to compare the above discussion with the classical $L^2$ restriction theory for hypersurfaces. Let $\Sigma \colon [-2,2]^{n-1} \to \R^n$ be (a parametrisation of) a smooth hypersurface of non-vanishing Gaussian curvature. Fix $\alpha \in C_c^\infty(\R^{n-1})$ non-negative and not identically zero with $\supp \alpha \subseteq [-1,1]$, and define the extension operator
\[ \cE_\Sigma g(\xi) = \int_{\R^n} e^{2 \pi i\, \Sigma(y) \cdot x} g(y) \alpha(y)\; \ud y \quad\text{for all $g \in L^1(\R^n) + L^\infty(\R^n)$} \]
analogous to the extension operators introduced in \S\ref{sub:fourier_restriction_for_space_curves}. Then consider the pushforward measure $\sigma_\Sigma = \Sigma_*( \alpha\; \ud y )$.

The classical van der Corput oscillatory integral estimate then tells us
\begin{equation}\label{eq: van der Corput}
    |\widecheck{\sigma}_{\Sigma}(x)| \lesssim (1 + |x|)^{- \frac{n-1}{2}}. 
\end{equation}
This estimate is sharp, in the sense that the reverse inequality also holds on an open cone in $\R^n$. Using this observation, it is possible to show
\begin{equation*}
    \|\cE_{\Sigma} \chi_{[-1,1]^{n-1}} \|_{L^q(\R^n)} < \infty \qquad \iff \qquad q > 2 \cdot \frac{n}{n-1},
\end{equation*}
which provides a necessary condition on $q$ for the Fourier restriction conjecture (see, for instance, \cite{Tao2004}). On the other hand, \eqref{eq: van der Corput} can also be used in the classical Stein--Tomas argument to prove the sharp $L^2$ Fourier restriction theorem for $\cE_{\Sigma}$. 

Our proof of Theorem~\ref{thm: L2 endpoint} mirrors the situation for hypersurfaces. It shows that the oscillatory integral estimates which are used to show the necessary condition \eqref{eq: ACK nec} can also be used to prove the sharp $L^2$ extension estimates for $\cE_\gamma$.
\end{remark}

%%%%%%%%%%%%%%%%%%%%%%%%%%%%%%%%%%%%%%%%%%%%%%%%%%%%%%%%%%%%%%%%%%%%%%%%%%%%%%%%%%%%%%%%%%%%%%%%

%   Square function bounds related to orthonormal Fourier restriction}

%%%%%%%%%%%%%%%%%%%%%%%%%%%%%%%%%%%%%%%%%%%%%%%%%%%%%%%%%%%%%%%%%%%%%%%%%%%%%%%%%%%%%%%%%%%%%%%%

\subsection{Square function bounds related to orthonormal Fourier restriction} Recently, the authors \cite{CDH} investigated variants of the $L^2$ Fourier restriction estimate for space curves involving orthonormal systems. This work complements Frank--Sabin's work \cite{FS2017} on orthonormal Fourier restriction for hypersurfaces. We remark that the original multilinear approach \cite{Prestini1978, Christ1985, Drury1985} to the Fourier restriction for curves appears to be incompatible with the theory for orthonormal systems. In particular, \cite{CDH} required the development of Stein--Tomas-type arguments, exploiting Fourier decay and Hilbert space structure. Indeed, this was a major motivation for the line of investigation discussed in \S\ref{subsec: intro rest via ACK} above.

By testing against simple examples (see \cite[Proposition 4.1]{CDH}), for $q_n$ as in \eqref{eq: crit q}, it is natural to conjecture the following. 

\begin{conjecture}[{\cite[Conjecture 1.2]{CDH}}]\label{conj: CDH} 
Let $n \geq 2$ and $\gamma \colon [-2,2] \to \R^n$ be a smooth, non-degenerate curve. For $q \geq q_n$ and $1 \leq r \leq q/2n$, the inequality
\begin{equation*}%\label{eq: ON Fourier rest 2}
    \left\|\sum_{j \in J}\nu_j |\cE_{\gamma} g_j|^2 \right\|_{L^{q/2}(\R^n)} \lesssim_{\gamma, \alpha} \left( \sum_{j \in J} |\nu_j|^r \right)^{1/r}
\end{equation*}
holds whenever $(g_j)_{j \in J}$ are orthonormal functions in $L^2(\R)$ and $(\nu_j)_{j \in J}$ are complex coefficients. Here the indexing set $J$ can be taken to be finite or countable.
\end{conjecture}

This conjecture is known for $n=2$ due to Frank--Sabin~\cite{FS2017}, and was shown to hold up to endpoints (that is, for $q > q_3$ and $r < q/6$) when $n = 3$ in \cite[Theorem 1.1]{CDH}. For $n \geq 4$, partial results were established in \cite[Theorem 1.1]{CDH}, but these fall short of the range of estimates stated in the conjecture. See \cite{CDH} for further details, including a discussion of the difficulties in the $n \geq 4$ case.

Our second result proves a special case of Conjecture~\ref{conj: CDH} up to the conjectured endpoint, where the system $(g_j)_{j \in J}$ is comprised of \textit{direction-separated wave packets} (see Definition~\ref{dfn: wp} and Definition~\ref{dfn: dir sep} below).

\begin{theorem}\label{thm: square fn} Let $n \geq 2$ and $\gamma \colon [-2,2] \to \R^n$ be a smooth, non-degenerate curve. For $q > q_n$ and $r \leq q/2n$, the inequality
\begin{equation}\label{eq: ON Fourier rest}
    \left\|\sum_{T \in \bdT}\nu_T |\cE_{\gamma} g_T|^2 \right\|_{L^{q/2}(\R^n)} \lesssim \left( \sum_{T \in \bdT} |\nu_T|^r \right)^{1/r}
\end{equation}
holds whenever $(g_T)_{T \in \bdT}$ are $L^2$-normalised wavepackets associated to direction-separated plates and $(\nu_T)_{T \in \bdT}$ are complex coefficients.
\end{theorem}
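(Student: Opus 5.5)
We reduce \eqref{eq: ON Fourier rest} to a Kakeya-type maximal inequality for plates, as the abstract indicates. We work at a fixed scale $R$ (arcs of length $R^{-1}$ in the parameter), where each wavepacket $g_T$ is adapted to an interval $I_T$ of length $R^{-1}$ and to a spatial plate $T$ of dimensions $R \times R^2 \times \cdots \times R^n$. The plate is oriented along the Frenet frame $\gamma'(t_T),\dots,\gamma^{(n)}(t_T)$. The factor $n$ in $r \le q/2n$ is consistent with the volume $|T| = R^{n(n+1)/2}$ and the normalisation $\|g_T\|_2 = 1$.

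\textbf{Step 1: pointwise wavepacket bound.} Using the definition of wavepackets, we first show $|\cE_\gamma g_T(x)|^2 \lesssim_N R^{-1}\,\chi_T^{(N)}(x)$ with $\|\cE_\gamma g_T\|_{L^\infty}^2 \lesssim R^{-1}$. Here $\chi_T^{(N)}$ is a weight that is $\sim 1$ on $T$ and decays rapidly away from it. This is non-stationary phase in the parameter $t$ on $I_T$ after a Taylor expansion of $\gamma$ to order $n$. The left-hand side of \eqref{eq: ON Fourier rest} is then dominated by $R^{-1}\|\sum_T |\nu_T| \chi_T\|_{L^{q/2}}$, up to rapidly decaying tails. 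This reduces the claim to an inequality of the form
\[ \Big\|\sum_{T} a_T \chi_T\Big\|_{L^{q/2}(\R^n)} \lesssim R\,\Big(\sum_T |a_T|^r\Big)^{1/r}, \]
uniformly over direction-separated families. By interpolation with the trivial $r=1$ bound and monotonicity in $r$, it suffices to treat $r = q/2n$. This trivial bound, $\|\chi_T\|_{L^{q/2}}\lesssim |T|^{2/q}$, gives the estimate with room since $q>q_n$. If wavepackets live at all scales, we sum dyadically over $R$ and exploit the strict inequality $q>q_n$ to get geometric decay.

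\textbf{Step 2: Kakeya maximal estimate via broad/narrow.} Direction separation means that distinct $T$ have $I_T$ in distinct $R^{-1}$-intervals, so there are at most $R$ plates with given direction. We will prove, for $p = q/2 > q_n/2$, a bound $\|\sum_T a_T\chi_T\|_{L^{p}} \lesssim R^{n(n+1)/(2p)}\|a\|_{\ell^{p/n}}$, which matches the required scaling. We induct on $R$ using the Bourgain--Guth method. Split $[-1,1]$ into $K$ intervals $\tau$ and, at each point $x$, distinguish two cases.
\begin{itemize}
\item In the \emph{narrow} case, a single $\tau$ dominates. We then rescale: the plates with $I_T\subseteq\tau$ become, after the affine map adapted to $\gamma$ on $\tau$, direction-separated plates at scale $R/K$ for a new non-degenerate curve. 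Induction applies, with a gain of $K^{-\epsilon}$ coming from $q>q_n$.
\item In the \emph{broad} case, there are $n$ intervals $\tau_1,\dots,\tau_n$ pairwise separated by $\gtrsim K^{-1}$ that all contribute significantly. Here we use an $n$-linear estimate $\|\prod_{i=1}^n \sum_{T\subseteq \tau_i} a_T\chi_T\|_{L^{p/n}}$, obtained by a multilinear Kakeya/transversality argument for plates.
\end{itemize}
The broad-case transversality comes from the fact that the tangent lines $\gamma'(t_i)$ at separated points of a non-degenerate curve span $\R^n$ with quantitative determinant. Since the plates are long in the $\gamma^{(n)}$ direction, we actually need the dual statement: the normals (the $R\times\dots$ short directions) are transversal. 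The $\ell^{p/n}$ sum on the right arises naturally from Hölder's inequality across the $n$ factors.

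\textbf{Main obstacle.} We expect the broad estimate to be the hardest part: establishing the sharp $n$-linear inequality for plates of anisotropic dimensions $R,\dots,R^n$. Multilinear Kakeya of Bennett--Carbery--Tao type handles tubes, not plates with nested dimensions. What I would try first is to slice each plate family along the long directions and apply Loomis--Whitney/Brascamp--Lieb with data given by the Frenet frames at the separated points $t_1,\dots,t_n$, checking the finiteness condition of the Brascamp--Lieb constant using the non-degeneracy \eqref{eq: nondegenerate}. If the trivial $\ell^\infty$ bound on overlaps (at most $R$ plates per direction) suffices to close this after rescaling, the induction yields the theorem with arbitrarily small losses, which are absorbed since $q>q_n$ is strict.
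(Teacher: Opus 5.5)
\textbf{The gap is in Step~1.} The bound $|\cE_\gamma g_T(x)|^2 \lesssim_N R^{-1}\chi_T^{(N)}(x)$, with rapid decay off $T$, is false. For $x \notin T$ the phase $t \mapsto (x-x_T)\cdot\gamma(t)$ can still have critical points inside $I_T$, so non-stationary phase is not available.

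A concrete example: take the parabola $\gamma(t) = (t, t^2/2)$ with $t_T = 0$, $x_T = 0$, and the point $x = (-MR/4,\, MR^2)$. This point lies at plate-distance $\sim M$ from $T$, but the phase has a non-degenerate critical point at $t = 1/(4R) \in I_T$. Stationary phase then gives $|\cE_\gamma g_T(x)|^2 \sim R^{-1}M^{-1}$ (for a generic cutoff not vanishing there), which is only polynomial decay.

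What is true, and sharp, is the Arkhipov--Chubarikov--Karatsuba bound. In your normalisation it reads $|\cE_\gamma g_T(x)| \lesssim R^{1/2}\big(R + H_{\gamma,J(T)}(x-x_T)\big)^{-1}$. These polynomial tails cannot be treated as negligible. If you cover them crudely by dilates $2^kT$ (on $2^kT \setminus 2^{k-1}T$ one only has $|\cE_\gamma g_T|^2 \lesssim 2^{-2k/n}R^{-1}$) and apply the Kakeya bound at scale $2^kR$, you get the series $\sum_k 2^{k(1-2/n)}$, which diverges.

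The missing idea is the paper's multiscale covering, which runs as follows.
\begin{enumerate}
\item On the level set $\{H_{\gamma,J(T)}(x-x_T) \sim 2^{\ell/n}R\}$ one has $|\cE_\gamma g_T|^2 \sim 2^{-2\ell/n}R^{-1}$.
\item This level set is not a dilate of $T$. It is a \emph{fan} of $O(2^{\ell/n})$ plates at the larger scale $\sim 2^{\ell/n}R$, all centred at $x_T$, with directions spread over $I_T$.
\item Over all $T \in \bdT$, these fans split into $O(1)$ direction-separated families, so the Kakeya inequality applies at every scale $\ell$.
\item The fan count costs a factor $2^{\ell/s}$ in the $\ell^{s/n}$ norm, and the resulting series $\sum_\ell 2^{-\ell(1/n-1/s)}$ converges because $s = q/2 > n$.
\end{enumerate}
So your reduction to a single-scale Kakeya inequality is not valid as stated. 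You need the Kakeya bound uniformly over all scales $\geq R$, together with this fan decomposition of the tails.

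Two smaller points. First, the target you state in Step~2, $\|\sum_T a_T\chi_T\|_{L^p} \lesssim R^{n(n+1)/(2p)}\|a\|_{\ell^{p/n}}$, is not what Step~1 requires, and it is false for $p > n(n+1)/2$. Take $\sim R$ plates centred at $0$ with $R^{-1}$-separated directions. Each contains $B(0,cR)$, so the left side is $\gtrsim R^{1+n/p}$, while the right side is $\sim R^{n(n+1)/(2p)+n/p}$. The correct and sharp target is $R\,\|a\|_{\ell^{p/n}}$, which is the rescaled form of Proposition~\ref{prop: Kakeya}.

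Second, the broad estimate you expect to be the main obstacle is elementary. Relax each plate to the slab $\{|(x-x_T)\cdot\gamma'(s_T)| < \delta\}$ (paper's unit-scale normalisation). Transversality of the normals $\gamma'(s_{T_1}),\dots,\gamma'(s_{T_n})$ then gives $|\bigcap_\ell T_\ell| \lesssim \nu^{-1}\delta^n$. Expanding the $L^n$ norm yields the $L^n$ estimate, and interpolating with the $L^\infty$ bound from counting plates through a point gives the exponent $e(n;s)$. No Brascamp--Lieb or multilinear Kakeya input is needed.

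With these corrections, your broad/narrow induction matches the paper's proof of the Kakeya inequality. The narrow case gains a positive power of the narrow scale exactly when $s > n(n+1)/2$. The real defect is the reduction to that inequality.
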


Here we say $g \in L^2(\R)$ is \textit{$L^2$-normalised} if $\|g\|_{L^2(\R)} = 1$. We refer the reader to Definition~\ref{dfn: wp} for the definition of a \textit{wave packet} in this context, and Definition~\ref{dfn: dir sep} for the definition of \textit{direction-separated}. Wave packets associated to direction-separated plates provide a natural class of orthonormal systems and, indeed, Conjecture~\ref{conj: CDH} is motivated by testing \eqref{eq: ON Fourier rest} against systems of such wave packets (see \cite[Proposition 4.1]{CDH}). Thus, Theorem~\ref{thm: square fn} arguably provides strong evidence for the validity of Conjecture~\ref{conj: CDH}, at least ruling out a large class of potential counterexamples. 

To prove Theorem~\ref{thm: square fn}, we again apply the Arkhipov--Chubarikov--Karatsuba estimate \eqref{eq: ACK intro}. In particular, \eqref{eq: ACK intro} is used to pointwise bound the functions $\cE_{\gamma}g_T$ and thereby reduce \eqref{eq: ON Fourier rest} to a purely geometric Kakeya-type maximal inequality. The reduction described here is somewhat in the spirit of classical arguments from \cite{Bourgain1991} in the hypersurface case, although the stationary phase machinery used in the space curve case is somewhat more involved. Moreover, the functions $\cE_{\gamma} g_T$ do not have simple localisation properties as in the hypersurface case. 

A multilinear variant of the key Kakeya inequality follows from very elementary arguments. The main task is then to reduce the \textit{bona fide} Kakeya inequality to this multilinear variant. This is achieved by appeal to a simple formulation of the Bourgain--Guth~\cite{BG2011} \textit{broad/narrow} method. It is remarked that the approach used here is rather similar to arguments used by Ham--Lee~\cite{HL2014} to prove weighted versions of Drury's~\cite{Drury1985} classical restriction theorem. 

%%%%%%%%%%%%%%%%%%%%%%%%%%%%%%%%%%%%%%%%%%%%%%%%%%%%%%%%%%%%%%%%%%%%%%%%%%%%%%%%%%%%%%%%%%%%%%%%

%   Notational conventions

%%%%%%%%%%%%%%%%%%%%%%%%%%%%%%%%%%%%%%%%%%%%%%%%%%%%%%%%%%%%%%%%%%%%%%%%%%%%%%%%%%%%%%%%%%%%%%%%

%\subsection*{Notational conventions} Given a list of objects $L$ and real numbers $A$, $B \geq 0$, we use the notation $A \lesssim_L B$, $B \gtrsim_L A$, $A = O_L(B)$ or $B =\Omega_L(A)$ to all indicate $A \leq C_L B$ for some constant $C_L$ which depends only on items in the list $L$ and possibly a choice of dimension $n$. We write $A \sim_L B$ to indicate $A \lesssim_L B$ and $B \lesssim_L A$.

\subsection*{Acknowledgements} The first author is supported by New Investigator Award EP/X011488/1. The second and third authors are supported by New Investigator Award UKRI097.

\subsection*{Tool and computational resource disclosure} No AI tools were used in the production of this paper.

%%%%%%%%%%%%%%%%%%%%%%%%%%%%%%%%%%%%%%%%%%%%%%%%%%%%%%%%%%%%%%%%%%%%%%%%%%%%%%%%%%%%%%%%%%%%%%%%

%   $L^2$ Fourier restriction via Arkhipov--Chubarikov--Karatsuba

%%%%%%%%%%%%%%%%%%%%%%%%%%%%%%%%%%%%%%%%%%%%%%%%%%%%%%%%%%%%%%%%%%%%%%%%%%%%%%%%%%%%%%%%%%%%%%%%

\section{\texorpdfstring{$L^2$ Fourier restriction via Arkhipov--Chubarikov--Karatsuba estimates}{L2 Fourier restriction via Arkhipov-Chubarikov-Karatsuba estimates}}\label{sec: rest via ACK}

The goal of this section is to prove Theorem~\ref{thm: L2 endpoint}, following the oscillatory integral scheme discussed in the introduction. Recalling the definition $q_n \coloneq n(n+1)$ from \eqref{eq: crit q}, the main step is to prove the following restricted weak-type inequality.  

\begin{proposition}\label{prop: weak endpoint} For a smooth, non-degenerate curve $\gamma \colon [2,2] \to \R^n$, we have
\begin{equation}\label{eq: weak endpoint}
  \|\cE_{\gamma}\cE_{\gamma}^* g\|_{L^{q_n,\infty}(\R^n)} \lesssim_{\gamma} \|g\|_{L^{q_n',1}(\R)}.   
\end{equation}
\end{proposition}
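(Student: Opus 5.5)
The operator $\cE_{\gamma}\cE_{\gamma}^*$ acts on functions on $\R^n$ (so the space on the right of \eqref{eq: weak endpoint} should be $L^{q_n',1}(\R^n)$). It is given by convolution with the kernel
\[ K(x) \coloneq \int_{\R} e^{2\pi i \gamma(t)\cdot x} \alpha(t)^2\,\ud t. \]
The plan is to run a Stein--Tomas argument with Bourgain's interpolation trick. The dyadic pieces of $K$ will be adapted to the level sets of the ACK functional $H_{\gamma;J}$ from \eqref{eq: H functional}, rather than to Euclidean annuli.

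\textbf{The interpolation.} Suppose we can write $K = \sum_{k \geq 0} K_k$ with
\[ \|K_k\|_{L^\infty(\R^n)} \lesssim 2^{-k} \qquad \textrm{and} \qquad \|\widehat{K_k}\|_{L^\infty(\R^n)} \lesssim 2^{k\beta}, \qquad \beta \coloneq \frac{n(n+1)}{2} - 1. \]
The operators $T_k g \coloneq g * K_k$ then satisfy $\|T_k\|_{L^1 \to L^\infty} \lesssim 2^{-k}$ and $\|T_k\|_{L^2 \to L^2} \lesssim 2^{k\beta}$. Bourgain's summation trick gives that $\sum_k T_k$ is restricted weak type $L^{p,1} \to L^{p',\infty}$ at the balancing exponent. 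There $\theta = 1/(1+\beta)$ and $1/p' = \theta/2$, so $p' = 2(1+\beta) = n(n+1) = q_n$, which is exactly \eqref{eq: weak endpoint}.

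\textbf{Step 1: the sublevel sets of $H$.} I first localise $t$ to intervals $I$ of length $\sim 2^{-k}$. For each $t$ consider the anisotropic box
\[ B_t(2^k) = \{x : |\gamma^{(r)}(t)\cdot x| \leq 2^{kr},\ 1 \leq r \leq n\}, \]
which has volume $\sim_\gamma 2^{kn(n+1)/2}$ by non-degeneracy \eqref{eq: nondegenerate}. The sublevel set $\{H_{\gamma;[-1,1]} \lesssim 2^k\}$ is essentially $\bigcup_t B_t(2^k)$. By Taylor expansion, the boxes $B_t(2^k)$ and $B_{t'}(2^k)$ are comparable when $|t-t'| \lesssim 2^{-k}$. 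Hence it suffices to take $t$ in a $2^{-k}$-net $\{t_\ell\}$.

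\textbf{Step 2: the cutoffs.} Set
\[ \Phi_k(x) \coloneq \sum_\ell \eta_k(x;t_\ell)\,\rho_\ell, \]
where $\eta_k(\cdot\,;t)$ is a smooth bump adapted to $B_t(2^k)$ and $\rho_\ell$ are suitably arranged weights. I would try either a smoothed maximum over $\ell$, or a genuine partition built from the $t$-localisation $\alpha^2 = \sum_I \alpha^2\chi_I$ in the ACK proof; I would try the latter first. Then take $K_k = K(\Phi_k - \Phi_{k-1})$.

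On the support of $\Phi_k - \Phi_{k-1}$ we have $H \gtrsim 2^{k}$. The ACK estimate \eqref{eq: ACK intro}, applied with $b = \alpha^2$, then gives $|K_k| \lesssim 2^{-k}$.

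For the Fourier side, $\widehat{K_k} = \sigma * (\widehat{\Phi_k} - \widehat{\Phi_{k-1}})$, where $\sigma = \gamma_*(\alpha^2\,\ud t)$. Each $\widehat{\eta_k}(\cdot\,;t_\ell)$ has height $\sim 2^{kn(n+1)/2}$ and is concentrated, with rapid decay, on the dual box of dimensions $2^{-kr}$ in the directions $\gamma^{(r)}(t_\ell)$. A translate of this dual box meets the curve in a $t$-interval of length $\lesssim 2^{-k}$. This is again by Taylor expansion, and only boundedly many $\ell$ contribute substantially at each point. This gives $\|\widehat{K_k}\|_\infty \lesssim 2^{kn(n+1)/2}\cdot 2^{-k} = 2^{k\beta}$.

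\textbf{Main obstacle.} The hard step is building cutoffs that achieve both bounds simultaneously. The ACK bound involves an infimum over $t$, so the cutoff must genuinely vanish wherever $H$ is small, yet it must be built from few smooth anisotropic pieces so that its Fourier transform is controlled. If a global partition of unity in $x$ is too awkward, the fallback is to work $t$-interval by $t$-interval. That is, decompose $\alpha^2 = \sum_I \alpha^2\chi_I$ (smoothly) at each scale, and use the localised ACK estimate with $J = I$, so that the cutoff for each piece is a single box family. The cost is that the pieces must then be reassembled as in the proof of \eqref{eq: ACK intro}, with the errors from the tails of the bumps absorbed into neighbouring scales.
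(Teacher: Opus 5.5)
Your interpolation bookkeeping is correct. The paper uses a single split $K_\gamma=K^\lambda_\low+K^\lambda_\high$ with $\lambda=(|E||F|)^{1/(n(n+1))}$ rather than a dyadic sum, but that difference is cosmetic. The gap is that the decomposition achieving both bounds, which you flag as the main obstacle, is essentially the whole proof, and neither of your routes supplies it.

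\emph{The main line fails.} It cannot work with $x$-only cutoffs $\Phi_k=\sum_\ell\rho_\ell\eta_k(\cdot\,;t_\ell)$ with constant weights. Summing your dyadic bounds forces $|K(x)|\,|1-\Phi_k(x)|\lesssim 2^{-k}$. Take $x$ at distance $\sim c\,2^{kn}$ along the long axis of $B_{t_0}(2^k)$, with $c$ small. Then $H(x)\sim c^{1/n}2^k$, the ACK bound is sharp there (the phase has a critical point of order $n$), and $x$ lies in only $O(1)$ of the boxes. So the weights with $t_\ell$ near $t_0$ must have size $\gtrsim 1$, for every $t_0$. But near the origin all $\sim 2^k$ boxes overlap and $|K|\sim 1$, so $\Phi_k\gtrsim 2^k$ there, whereas you need $\Phi_k\approx 1$. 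Letting the weights depend on $x$ (a smoothed maximum, or an $x$-partition of unity) destroys the structure your Fourier computation relies on.

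That Fourier computation is also off as written. With the full measure $\sigma$, at $\xi=\gamma(t_0)$ every dual box contains $\xi-\gamma(t_0)=0$, so every $\ell$ contributes, not boundedly many. Both defects disappear only when each $x$-cutoff is tied to a $t$-localisation, i.e.\ a joint symbol $\sum_\mu\eta_\mu(x)\zeta_\mu(t)|\alpha(t)|^2$ with $t$-pieces of length $\sim\lambda^{-1}$. This is your fallback and the paper's decomposition. With it, the Fourier bound (Proposition~\ref{prop: low}) is routine: isotropic decay $(1+\lambda\,\dist(\xi,\gamma(I_\mu)))^{-2}$ plus counting.

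\emph{The fallback lacks the dispersive estimate.} What remains missing is $\|K^\lambda_\high\|_{L^\infty}\lesssim\lambda^{-1}$ (Proposition~\ref{prop: high}). Applying the localised ACK bound with $J=I_\mu$ piece by piece gives $O(\lambda^{-1})$ for each of the $\sim\lambda$ pieces, hence only $O(1)$ in total. You must exploit cancellation between the $t$-pieces at a single scale; ``absorbing tails into neighbouring scales'' does not address this.

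The paper fixes $x$ and treats $K^\lambda_\high(x)$ as one integral $\int e^{2\pi i\gamma(t)\cdot x}b(t)\,\ud t$, where $b(t)=\sum_\mu(1-\eta_\mu(x))\zeta_\mu(t)|\alpha(t)|^2$. It then splits the $t$-range into $O(1)$ intervals $I$. On each $I$, either $|I|\lesssim\lambda^{-1}$, or $H_{\gamma,I}(x)\ge\lambda$ and $\|b'\|_{L^1(I)}=O(1)$, so van der Corput applies. Since $\|b'\|_{L^1(I)}\lesssim 1+\sum_\mu|c_{\mu+1}-c_\mu|$, where $c_\mu$ is the $x$-cutoff attached to $\zeta_\mu$, the crux is a bounded-variation estimate in $\mu$ for fixed $x$. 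This needs four ingredients:
\begin{enumerate}[(a)]
\item a further split according to the first $\ell$ with $|\gamma^{(\ell)}(t_\mu)\cdot x|\ge C\lambda^\ell$, and an $m$ with $|\gamma^{(m)}(t_\mu)\cdot x|\gtrsim|x|$;
\item Lemma~\ref{lem: eta cutoff}, which makes $t\mapsto\gamma^{(r)}(t)\cdot x$ ($r<m$) monotone on $O(1)$ intervals, so each factor $\eta_{\mu,r}(C^{-1}\lambda^{-r}x)$ is monotone in $\mu$;
\item an $O(\lambda^{-1})$ per-step Lipschitz bound for the remaining factor built from $\gamma^{(m)}$;
\item the observation that the symbol vanishes on the part of $I$, away from its ends, where $|\gamma^{(\ell)}(t)\cdot x|<\lambda^\ell$.
\end{enumerate}
None of these appears in your proposal, and without them the weak-type endpoint does not follow.
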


Assuming \eqref{eq: weak endpoint}, by the Lorentz space H\"older inequality,
\begin{equation*}
    \|\cE_{\gamma}^* g\|_{L^2(\R)}^2 = \inn{\cE_{\gamma} \cE_{\gamma}^* g}{g} \leq \|\cE_{\gamma}\cE_{\gamma}^* g\|_{L^{q_n, \infty}(\R^n)}\|g\|_{L^{q_n', 1}(\R^n)} \lesssim_{\gamma} \|g\|_{L^{q_n',1}(\R^n)}^2. 
\end{equation*}
Thus, by duality, we obtain the weak-type version of Theorem~\ref{thm: L2 endpoint}. Moreover, once Proposition~\ref{prop: weak endpoint} is established, we shall be able to upgrade the above estimate to the strong-type inequality of Theorem~\ref{thm: L2 endpoint} via a variant of the method of Bak--Seeger \cite{BS2011}. We remark that this upgrading procedure also exploits various ingredients introduced in the proof of Proposition~\ref{prop: weak endpoint}.

%%%%%%%%%%%%%%%%%%%%%%%%%%%%%%%%%%%%%%%%%%%%%%%%%%%%%%%%%%%%%%%%%%%%%%%%%%%%%%%%%%%%%%%%%%%%%%%%

%   Preliminaries

%%%%%%%%%%%%%%%%%%%%%%%%%%%%%%%%%%%%%%%%%%%%%%%%%%%%%%%%%%%%%%%%%%%%%%%%%%%%%%%%%%%%%%%%%%%%%%%%

\subsection{Preliminaries} \label{sub:preliminaries}

We discuss preliminaries for the proof of Proposition~\ref{prop: weak endpoint}.\medskip

\noindent \textit{Convolution kernels.} Let $a \in C^\infty(\R^n \times \R)$ be bounded and with $\supp{t}a \subseteq [-1,1]$. Define an operator by setting
\[ U[a]f \coloneq K[a] * f \quad\text{where}\quad K[a](x) \coloneq \int_{\R} e^{2 \pi i \gamma(t) \cdot x} a(x;t)\; \ud t. \]
Then $\cE_\gamma \cE_\gamma^* = U[|\alpha|^2]$, where $\alpha$ is the cutoff appearing in \eqref{eq:2eio1j2oiej12oilabel}, which depends only on $t$. To simplify notation, let $U_\gamma \coloneq \cE_\gamma \cE_\gamma^*$ and $K_\gamma \coloneq K[|\alpha|^2]$.\medskip

\noindent \textit{Quantified non-degeneracy.} Throughout what follows, we shall work with a fixed smooth, non-degenerate curve $\gamma \colon [-2, 2] \to \R^n$. The non-degeneracy condition \eqref{eq: nondegenerate}, together with a compactness argument, implies that there exists some $B \geq 1$ with $B = O_{\gamma}(1)$ such that
\begin{equation}\label{eq: non-deg 1}
 |\gamma^{(r)}(t)| \geq B^{-1} \quad \textrm{for $1 \leq r \leq n$}  \quad \textrm{and} \quad \max_{1 \leq r \leq n} |\gamma^{(r)}(t) \cdot x| \geq B^{-1}|x|
\end{equation}
and
\begin{equation}\label{eq: non-deg 2}
     |\gamma^{(r)}(t)| \leq B \qquad \textrm{for $1 \leq r \leq 2n$}
\end{equation}
for all $t \in [-2,2]$ and all $x \in \R^n$. We let $\fkG(B)$ denote the class of all curves $\gamma: [-2,2] \to \R^n$ satisfying \eqref{eq: non-deg 1} and \eqref{eq: non-deg 2}. We remark that the implicit constant in Proposition \ref{prop: weak endpoint} is uniform over all curves in a fixed class $\fkG(B)$, which will prove useful in finishing the proof of Theorem~\ref{thm: L2 endpoint} in \S\ref{sub:strong_type_upgrade_the_bak_seeger_argument}.

%%%%%%%%%%%%%%%%%%%%%%%%%%%%%%%%%%%%%%%%%%%%%%%%%%%%%%%%%%%%%%%%%%%%%%%%%%%%%%%%%%%%%%%%%%%%%%%%

%   Initial decomposition

%%%%%%%%%%%%%%%%%%%%%%%%%%%%%%%%%%%%%%%%%%%%%%%%%%%%%%%%%%%%%%%%%%%%%%%%%%%%%%%%%%%%%%%%%%%%%%%%

\subsection{Initial decomposition}

Let $C \geq 1$, chosen sufficiently large so as to satisfy the requirements of the forthcoming argument, depending only on $B$ and $n$. Fix $\zeta \in C^{\infty}_c(\R)$ satisfying
\begin{equation*}
    0 \leq \zeta \leq 1, \qquad \supp{} \zeta \subseteq [-1,1] \qquad \textrm{and} \qquad \sum_{\mu \in \Z} \zeta(\,\cdot\, - \mu) \equiv 1. 
\end{equation*}
Given $\lambda \geq 1$, we let $\fkM(\lambda) \coloneq \Z \cap [-2C\lambda, 2C\lambda]$. For all $\mu \in \fkM(\lambda)$, define $t_{\mu} \coloneq (C\lambda)^{-1}\mu \in [-2,2]$ and $\zeta_{\mu}(t) \coloneq \zeta(C\lambda t-\mu)$, so that 
\begin{equation*}%\label{eq: zeta mu}
    \supp{} \zeta_{\mu} \subseteq I_{\mu} \coloneq [t_{\mu} - (C\lambda)^{-1}, t_{\mu} + (C\lambda)^{-1}].
\end{equation*} 
Then $1 = \sum_{\mu \in \Z} \zeta_\mu(t)$ for $t \in [-1,1]$.

Let $\eta_{\circ} \in C^{\infty}_c(\R)$ be an even, radially non-increasing function, satisfying
\begin{equation*}
 0 \leq\eta_{\circ} \leq 1, \qquad \supp{} \eta_{\circ} \subseteq [-2,2] \qquad \textrm{and} \qquad\eta_{\circ}(s) = 1 \quad \textrm{for} \quad s \in [-1,1].   
\end{equation*}
For $1 \leq r \leq n$ and $\mu \in \fkM(\lambda)$, define
\[ \eta_{\mu,r}(x) \coloneq \eta_{\circ} \big( \gamma^{(r)}(t_{\mu})\cdot x \big). \]
For $1 \leq \ell \leq n$, and each $\mu \in \fkM(\lambda)$, define
\begin{equation} \label{eq: eta mu}
    \eta_\mu^{\ell}(x) \coloneq \big(1 - \eta_{\mu,\ell} ( C^{-1}\lambda^{-\ell} x ) \big)\prod_{r = 1}^{\ell - 1} \eta_{\mu,r}\left( C^{-1} \lambda^{-r} x \right) .
\end{equation}
We interpret an empty product as equal to $1$ in what follows. Thus \eqref{eq: eta mu} defines the function $\eta_{\mu}^1(x) \coloneq (1 - \eta_{\mu,1}(C^{-1}\lambda^{-1}x))$. Also, for $\mu \in \fkM(\lambda)$, define
\begin{equation*}
  \eta_\mu(x) \coloneq \prod_{r = 1}^n \eta_{\mu,r} \left( C^{-1} \lambda^{-r} x \right) \quad \textrm{so that} \quad 1 = \eta_\mu(x) + \sum_{\ell = 1}^n \eta_\mu^{\ell}(x) \quad \textrm{for all $x \in \R^n$.}
\end{equation*} 

Given a smooth bump function $\alpha$, and $\lambda > 0$, define
\begin{equation}\label{eq: a high}
  a_\high^\lambda(x;t) \coloneq \sum_{\mu \in \fkM(\lambda)} \sum_{\ell = 1}^n a_\mu^\ell(x;t) \quad\text{where}\quad a_\mu^\ell(x;t) \coloneq  \eta_\mu^{\ell}(x)\zeta_\mu(t)|\alpha(t)|^2    
\end{equation} 
and
\begin{equation*}
  a_{\low}^\lambda(x;t) \coloneq \sum_{\mu \in \fkM(\lambda)} a_\mu(x;t) \quad\text{where}\quad a_\mu(x;t) \coloneq    \eta_\mu(x) \zeta_\mu(t) |\alpha(t)|^2.   
\end{equation*} 
Thus, $|\alpha(t)|^2 = a_\high^\lambda(x;t) + a_{\low}^\lambda(x;t)$ for $t \in [-2,2]$, $x \in \R^n$, and $\lambda \geq 1$.

For each $\lambda \geq 1$, let $K^\lambda_\low \coloneq K[a^\lambda_\low]$ and $K^\lambda_\high \coloneq K[a^\lambda_\high]$ denote the corresponding kernels to the symbols defined above. Then
\[ K_\gamma = K^\lambda_\low + K^\lambda_\high. \]
Also define the associated convolution operators
\begin{equation*}
    U_{\low}^{\lambda}  \coloneq U[a^\lambda_\low] \quad\text{and}\quad U_{\high}^{\lambda} \coloneq U[a^\lambda_\high],
\end{equation*}
so that $U_\gamma = U_{\low}^{\lambda} + U_{\high}^{\lambda}$. Our goal is to prove $L^p$ bounds for $U_{\low}^{\lambda}$ and $U_{\high}^{\lambda}$, tracking the dependence of our estimates on the $\lambda$ parameter.

%%%%%%%%%%%%%%%%%%%%%%%%%%%%%%%%%%%%%%%%%%%%%%%%%%%%%%%%%%%%%%%%%%%%%%%%%%%%%%%%%%%%%%%%%%%%%%%%

%                                          Estimating the low part

%%%%%%%%%%%%%%%%%%%%%%%%%%%%%%%%%%%%%%%%%%%%%%%%%%%%%%%%%%%%%%%%%%%%%%%%%%%%%%%%%%%%%%%%%%%%%%%%

\subsection{Estimating the low part} Using the Fourier support properties of the kernel $K_{\low}^{\lambda}[a]$, we establish the following $L^2$ bound for $U^\lambda_\low$.

\begin{proposition}\label{prop: low} For all $\lambda \geq 1$ we have
\begin{equation*}
    \| \widehat{K}^\lambda_{\low} \|_{L^\infty(\R^n)} \lesssim_B \lambda^{\frac{n(n+1)}{2} - 1} \quad\text{and so}\quad \|U_{\low}^{\lambda} \|_{L^2(\R^n) \to L^2(\R^n)} \lesssim_B \lambda^{\frac{n(n+1)}{2} - 1}.
\end{equation*}
\end{proposition}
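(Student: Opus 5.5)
We prove the bound on $\widehat{K}^\lambda_{\low}$ directly. The $L^2$ operator bound then follows from Plancherel, since $U^\lambda_\low$ is convolution with $K^\lambda_\low$. Write $K^\lambda_\low = \sum_{\mu \in \fkM(\lambda)} K[a_\mu]$. Here
\[ K[a_\mu](x) = \eta_\mu(x)\, \phi_\mu(x), \qquad \phi_\mu(x) \coloneq \int_{\R} e^{2\pi i \gamma(t)\cdot x}\zeta_\mu(t)|\alpha(t)|^2\,\ud t, \]
so that $K[a_\mu]$ is a product. Hence $\widehat{K[a_\mu]} = \widehat{\eta}_\mu * \widehat{\phi}_\mu$. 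Now $\widehat{\phi}_\mu$ is the pushforward measure $\gamma_*(\zeta_\mu|\alpha|^2\,\ud t)$ (up to reflection), a positive measure of total mass $\lesssim (C\lambda)^{-1}$ supported on the short arc $\gamma(I_\mu)$.

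\textbf{Step 1 (bound on $\widehat{\eta}_\mu$).} The function $\eta_\mu$ is a smooth bump adapted to the parallelepiped
\[ P_\mu \coloneq \{x : |\gamma^{(r)}(t_\mu)\cdot x| \le 2C\lambda^r,\ 1\le r\le n\}. \]
By \eqref{eq: non-deg 1}, \eqref{eq: non-deg 2}, the vectors $\gamma^{(r)}(t_\mu)$ form a basis with determinant $\sim_B 1$. Changing variables to $y_r = \gamma^{(r)}(t_\mu)\cdot x$ writes $\eta_\mu$ as a tensor product of one-variable bumps $\eta_\circ(C^{-1}\lambda^{-r}y_r)$. Therefore $\widehat{\eta}_\mu$ is $L^1$-normalised with $\|\widehat\eta_\mu\|_{L^1}\lesssim 1$ and
\[ |\widehat{\eta}_\mu(\xi)| \lesssim_{B,N} |P_\mu| \prod_{r=1}^n \big(1 + C\lambda^r|\xi\cdot v_r^\mu|\big)^{-N}, \qquad |P_\mu|\sim_B \lambda^{n(n+1)/2}, \]
where $(v_r^\mu)$ is the dual basis to $(\gamma^{(r)}(t_\mu))$. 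In other words, $\widehat\eta_\mu$ is concentrated on the dual box $P_\mu^*$ of dimensions $\lambda^{-1}\times\cdots\times\lambda^{-n}$, oriented along the Frenet-type frame at $t_\mu$.

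\textbf{Step 2 (one piece).} Taylor expansion gives $\gamma(t) = \gamma(t_\mu) + \sum_{r=1}^{n}\frac{(t-t_\mu)^r}{r!}\gamma^{(r)}(t_\mu) + O_B(|t-t_\mu|^{n+1})$. Since $|t-t_\mu| \le (C\lambda)^{-1}$, the arc $\gamma(I_\mu)$ lies in the translate $\gamma(t_\mu) + O(1)P_\mu^*$, and in fact in a box of dimensions $\lesssim\lambda^{-1}\times\dots\times\lambda^{-n}$ along this frame. We then bound
\[ |\widehat{\eta}_\mu*\widehat\phi_\mu(\xi)| \le \|\widehat\eta_\mu\|_\infty\,\|\widehat\phi_\mu\|_{\mathrm{TV}} \lesssim |P_\mu|\,\lambda^{-1} \sim \lambda^{\frac{n(n+1)}{2}-1}. \]
This is exactly the claimed size for a single $\mu$. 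Moreover, combining the decay of $\widehat\eta_\mu$ with the support of $\widehat\phi_\mu$ gives the rapid decay
\[ |\widehat{K[a_\mu]}(\xi)|\lesssim_N \lambda^{\frac{n(n+1)}{2}-1}\big(1+\dist_{\mu}(\xi,\gamma(t_\mu)+P^*_\mu)\big)^{-N}, \]
where $\dist_\mu$ denotes the distance measured in the rescaled coordinates of $P^*_\mu$.

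\textbf{Step 3 (summing over $\mu$: the main obstacle).} We must show that a fixed $\xi$ lies in, or near in the rescaled sense, only $O_B(1)$ of the boxes $\gamma(t_\mu)+P_\mu^*$, with the tails summable. We do this by looking at the first-frame coordinate of $\xi-\gamma(t_\mu)$. If $\xi$ is within rescaled distance $D$ of both $\gamma(t_\mu)+P^*_\mu$ and $\gamma(t_{\mu'})+P^*_{\mu'}$, then $\gamma(t_\mu)-\gamma(t_{\mu'})$ has $v_1^\mu$-component of size $\lesssim D\lambda^{-1}$, using that the frames at nearby $t$ are comparable. Taylor expansion gives a $v_1^\mu$-component of $\approx (t_\mu - t_{\mu'}) = (\mu-\mu')/(C\lambda)$, valid at least when $|t_\mu-t_{\mu'}|$ is small. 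This forces $|\mu-\mu'|\lesssim D$. For far-apart $\mu,\mu'$ we instead use non-degeneracy (the curve is locally injective, with chords transverse to the frame) to get separation in some coordinate. Summing the decay factors $(1+D)^{-N}$ over $\mu$ then gives $\sum_\mu|\widehat{K[a_\mu]}(\xi)|\lesssim_B \lambda^{\frac{n(n+1)}{2}-1}$. Making this separation argument rigorous uniformly over $\fkG(B)$, including the comparison of the boxes $P^*_\mu$ for different $\mu$, is the step requiring most care.
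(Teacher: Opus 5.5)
Steps 1 and 2 are correct and follow the paper's scheme. The paper only uses the cruder isotropic consequence $|\widehat{K}^\lambda_\mu(\xi)|\lesssim_B\lambda^{\frac{n(n+1)}{2}-1}(1+\lambda\inf_{s\in I_\mu}|\xi-\gamma(s)|)^{-2}$ of integrating by parts in \eqref{eq: low 1}, so the anisotropic shape of $P_\mu^*$ plays no role here. The local half of your Step 3 is also sound. Expanding $v_1^\mu$ in the basis $(v_r^{\mu'})_{r=1}^n$, with $O_B(1)$ coefficients, gives $|v_1^\mu\cdot(\gamma(t_\mu)-\gamma(t_{\mu'}))|\lesssim_B(1+D)\lambda^{-1}$; this needs only bounded frames, not comparable ones. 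Then $v_1^\mu\cdot(\gamma(t_{\mu'})-\gamma(t_\mu))=(t_{\mu'}-t_\mu)+O_B(|t_{\mu'}-t_\mu|^{n+1})$ gives $|\mu-\mu'|\lesssim_B 1+D$ whenever $|t_\mu-t_{\mu'}|\leq\varepsilon_B$, for a suitable $\varepsilon_B>0$.

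The far-apart case is where the argument fails. Nothing in \eqref{eq: non-deg 1}--\eqref{eq: non-deg 2} forces $\gamma$ to be injective; for $n=2$, $\gamma(t)=(\cos 4t,\sin 4t)$ traverses a circle more than once and lies in $\fkG(256)$. If $\gamma(t_\mu)=\gamma(t_{\mu'})$ with $|t_\mu-t_{\mu'}|$ large, then $\xi=\gamma(t_\mu)$ lies in both boxes, so no separation in any coordinate exists. Local injectivity says nothing about distant parameters, and even for injective curves such a separation could not be uniform over $\fkG(B)$.

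You do not need it, and the paper avoids it as follows:
\begin{itemize}
\item Partition $[-2,2]$ into $O_B(1)$ intervals $J$ of length $\sim\varepsilon_B$. The paper uses the bi-Lipschitz bound \eqref{eq: low 2.5} where you use the $v_1^\mu$ computation.
\item For fixed $\xi$, let $\mu_J$ minimise your rescaled distance $D_\mu$ over $\{\mu : t_\mu\in J\}$. Your local argument, applied to the pair $\mu,\mu_J$ with $D=D_\mu$, gives $1+D_\mu\gtrsim_B 1+|\mu-\mu_J|$, hence $\sum_{t_\mu\in J}(1+D_\mu)^{-N}\lesssim_B 1$.
\item The triangle inequality over the $O_B(1)$ intervals $J$ then gives $\sum_\mu|\widehat{K[a_\mu]}(\xi)|\lesssim_B\lambda^{\frac{n(n+1)}{2}-1}$.
\end{itemize}
With this replacement for the far-apart step, your argument is complete.
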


As a preliminary to the proof, for each $\mu \in \fkM(\lambda)$, let $K_{\mu}^{\lambda} \coloneq K[a_{\mu}]$. Then
\[ K^\lambda_{\low} = \sum_{\mu \in \fkM(\lambda)} K^\lambda_\mu. \]
Note that
\begin{equation}\label{eq: low 1}
  \widehat{K}_{\mu}^{\lambda}(\xi) = \int_{\R} \int_{\R^n} e^{-2\pi i x \cdot (\xi - \gamma(s))} \eta_{\mu}(x) \,\ud x\; \zeta_{\mu}(s)|\alpha(s)|^2\,\ud s.
\end{equation}
We shall make use of this formula in the proof of Proposition~\ref{prop: low} and also later. 

\begin{proof}[Proof (of Proposition~\ref{prop: low})] 
It is not difficult to see that 
\begin{equation*}
  \| \partial_x^{\beta} \eta_{\mu} \|_{L^\infty(\R^n)} \lesssim_{\beta} \lambda^\beta \qquad \textrm{for all $\beta \in \N_0^n$.}
\end{equation*}
Furthermore, the support of $\eta_{\mu}$ (and therefore all of its derivatives) is contained in
\begin{equation*}
    \bigcap_{r = 1}^n \big\{ x \in \R^n : |\gamma^{(r)}(t_{\mu}) \cdot x| \leq 2C\lambda^r\big\}.
\end{equation*}
This is in turn contained in a parallelepiped with dimensions
\begin{equation*}
  O(\lambda) \times O(\lambda^2) \times \cdots \times O(\lambda^n),  
\end{equation*}
from which we conclude the support of $\eta_\mu$ has volume bounded by $\lambda^{\frac{n(n+1)}{2}}$.

Thus, by applying a change of variable and repeated integration-by-parts to the $x$-integral in \eqref{eq: low 1}, we see that
\begin{align}
\nonumber
   |\widehat{K}_{\mu}^{\lambda}(\xi)| &\lesssim_B \lambda^{\frac{n(n+1)}{2}} \int_{\R}(1 + \lambda|\xi - \gamma(s)|)^{-2}\, \zeta_{\mu}(s)\,\ud s\\
   \label{eq: low 2}
   &\lesssim \lambda^{\frac{n(n+1)}{2} - 1} \left( 1 + \lambda \inf_{s \in I_\mu} |\xi - \gamma(s)| \right)^{-2}.
\end{align}

 Let $\varepsilon \coloneq 2^{-1}B^{-2}$, where $B \geq 1$ is as in \eqref{eq: non-deg 1} and \eqref{eq: non-deg 2}. If $s$, $t \in [-2,2]$ satisfy $|s-t| \leq \varepsilon$, then Taylor's theorem yields
\begin{equation}\label{eq: low 2.5}
    |\gamma(s)-\gamma(t)|\geq \big(B^{-1} - B|s-t|\big)|s-t| \geq (2B)^{-1}|s-t|.
\end{equation}
We may assume without loss of generality that $\lambda^{-1} \leq \varepsilon/4$, since otherwise the desired result follows from trivial bounds. 

Let $\cJ$ denote a partition $[-2,2]$ into $O(\varepsilon^{-1})$ intervals $J$ of length $\varepsilon/4 < |J| \leq \varepsilon/2$. Define 
\begin{equation*}
   \fkM_J(\lambda) \coloneq \left\{ \nu \in \fkM(\lambda) : t_{\mu} \in J \right\} \qquad \textrm{for all $J \in \cJ$.} 
\end{equation*}
Given $J \in \cJ$, we let $\bar{J}$ denote the interval concentric to $J$ but with $|\bar{J}| = \varepsilon$. If $\mu \in \fkM_J(\lambda)$ and $t \in I_{\mu}$, then it follows that $t \in \bar{J}$.

Fix $J \in \cJ$ and suppose $\mu$, $\nu \in \fkM_J(\lambda)$ satisfy $|\mu - \nu| > 10$ and $s \in I_{\mu}$ and $t \in I_{\nu}$. Then $s$, $t \in \bar{J}$ so that $|s - t| \leq \varepsilon$ and so it follows from \eqref{eq: low 2.5} that
\begin{equation*}
 |\xi - \gamma(s)| + |\xi - \gamma(t)| \geq |\gamma(s) - \gamma(t)| \gtrsim_B |s-t| \gtrsim \lambda^{-1}|\mu - \nu|,
\end{equation*}
where the final inequality exploits the separation $|\mu - \nu| > 10$. Thus, there is a constant $0 < c \leq 1$, depending only on $B$ and $n$, such that for each $\xi \in \R^n$, either
\begin{equation}\label{eq: low 3}
    \lambda|\xi - \gamma(s)| \geq c |\mu - \nu| \qquad \textrm{or} \qquad \lambda|\xi - \gamma(t)| \geq c |\mu - \nu|.
\end{equation}

Given $\xi \in \R^n$, let $s(\xi) \in [-2,2]$ satisfy
\begin{equation*}
  |\xi - \gamma(s(\xi))| = \inf_{\mu \in \fkM_J(\lambda)}\inf_{s \in I_{\mu}} |\xi - \gamma(s)|.  
\end{equation*}
It then follows from \eqref{eq: low 2} that 
\begin{equation} \label{eq: low 4}
    |\widehat{K}_\mu^\lambda(\xi)| \lesssim_B \lambda^{\frac{n(n+1)}{2} - 1} \big(1 + \lambda|\xi - \gamma(s(\xi))|\big)^{-2} \qquad \textrm{for all $\mu \in \fkM_J(\lambda)$.}
\end{equation}

There exists some $\mu(\xi) \in \fkM_J(\lambda)$ such that $s(\xi) \in I_{\mu(\xi)}$. Decompose $\fkM_J(\lambda)$ as the disjoint union $\fkM_J(\lambda) = \fkM_J^0(\lambda; \xi) \cup \fkM_J^1(\lambda; \xi)$, where
\begin{align*}
  \fkM_J^0(\lambda; \xi) &\coloneq \left\{ \nu \in \fkM_J(\lambda) : \lambda |\xi - \gamma(s(\xi))| \geq  c|\mu(\xi) - \nu| \textrm{ or } |\mu(\xi) - \nu| \leq 10 \right\}, \\
  \fkM_J^1(\lambda; \xi) &\coloneq \fkM_J(\lambda) \setminus \fkM_J^0(\lambda; \xi). 
\end{align*}
If $\nu \in \fkM_J^1(\lambda; \xi)$, then $\lambda |\xi - \gamma(s(\xi))| \not\geq  c|\mu(\xi) - \nu|$ and $|\mu(\xi) - \nu| > 10$. Thus, applying \eqref{eq: low 3} with $s \coloneq s(\xi)$ and $t \in I_{\nu}$ arbitrary implies that $\lambda |\xi - \gamma(t)| \geq  c|\mu(\xi) - \nu|$ for all $t \in I_\nu$. Combining this with \eqref{eq: low 2} yields 
\begin{equation} \label{eq: low 5}
    |\widehat{K}_\nu^\lambda(\xi)| \lesssim_B \lambda^{\frac{n(n+1)}{2} - 1}\left( 1 + |\mu(\xi) - \nu|\right)^{- 2}. 
\end{equation}

Observe that $\#\fkM_J^0(\lambda; \xi) \lesssim_B 1 + \lambda |\xi - \gamma(s(\xi))|$ and so \eqref{eq: low 4} implies 
\begin{equation*}
   \sum_{\nu \in \fkM_J^0(\lambda;\xi)} |\widehat{K}^\lambda_\nu(\xi)|  \lesssim_B \lambda^{\frac{n(n+1)}{2} - 1} \left(1 + \lambda|\xi - \gamma(s(\xi))|\right)^{-2} \#\fkM_J^0(\lambda; \xi) \lesssim_B \lambda^{\frac{n(n+1)}{2} - 1}.
\end{equation*}
On the other hand, by \eqref{eq: low 5}, we have
\begin{equation*}
    \sum_{\nu \in \fkM_J^1(\lambda;\xi)} |\widehat{K}^\lambda_\nu(\xi)| \lesssim_B \lambda^{\frac{n(n+1)}{2} - 1} \sum_{\nu \in \fkM(\lambda)} \left( 1+ |\mu(\xi) - \nu|\right)^{-2} \lesssim \lambda^{\frac{n(n+1)}{2} - 1}.
\end{equation*}
Combining the two previous displays, summing over all $J \in \cJ$ and applying the triangle inequality, we conclude that $|\widehat{K}^\lambda_{\low}(\xi)| \lesssim_B \lambda^{\frac{n(n+1)}{2} - 1}$, as required. 
\end{proof}

%%%%%%%%%%%%%%%%%%%%%%%%%%%%%%%%%%%%%%%%%%%%%%%%%%%%%%%%%%%%%%%%%%%%%%%%%%%%%%%%%%%%%%%%%%%%%%%%

%                                          Estimating the high part

%%%%%%%%%%%%%%%%%%%%%%%%%%%%%%%%%%%%%%%%%%%%%%%%%%%%%%%%%%%%%%%%%%%%%%%%%%%%%%%%%%%%%%%%%%%%%%%%

\subsection{Estimating the high part} The second ingredient is an analogue of the dispersive estimate for the classical Stein--Tomas / Strichartz estimate. 

\begin{proposition}\label{prop: high} For all $\lambda \geq 1$ we have
\begin{equation*}    \|K_{\high}^{\lambda} \|_{L^{\infty}(\R^n)} \lesssim_B \lambda^{-1}.
\end{equation*}
\end{proposition}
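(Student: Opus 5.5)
Fix $x \in \R^n$. We have $K^\lambda_\high(x) = \sum_{\mu \in \fkM(\lambda)}\sum_{\ell=1}^n \eta^\ell_\mu(x) \int e^{2\pi i \gamma(t)\cdot x}\zeta_\mu(t)|\alpha(t)|^2\,\ud t$, and the plan is to bound each oscillatory integral over $I_\mu$ by a van der Corput-type estimate adapted to scale $(C\lambda)^{-1}$. We then show that the resulting bounds are summable in $\mu$ with total $O_B(\lambda^{-1})$. Since the $\eta$ factors depend only on $x$, for fixed $x$ each term is a genuine one-dimensional oscillatory integral in $t$. If $\eta^\ell_\mu(x)\neq 0$, then
\[ |\gamma^{(\ell)}(t_\mu)\cdot x| \geq C\lambda^\ell \quad\text{and}\quad |\gamma^{(r)}(t_\mu)\cdot x| \leq 2C\lambda^r \text{ for } r<\ell. \]
By Taylor expansion around $t_\mu$ with $|t-t_\mu|\le (C\lambda)^{-1}$, and using \eqref{eq: non-deg 2} together with the preceding bounds, this persists on $I_\mu$:
\[ |\gamma^{(\ell)}(t)\cdot x| \gtrsim C\lambda^\ell \quad\text{for } t\in I_\mu, \]
provided $C$ is large. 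The higher-order Taylor terms $\gamma^{(\ell+k)}(t_\mu)\cdot x$ are controlled crudely by $B|x|$. This is a problem when $|x|$ is huge, so there the bound must come from the top-order information instead. Compare with $|\gamma^{(\ell)}\cdot x|$: $(C\lambda)^{-k}|\gamma^{(\ell+k)}\cdot x|$ is small relative to it unless some higher derivative dominates. I would handle this by choosing, for each $t$, the index $r$ maximising $|\gamma^{(r)}(t)\cdot x|^{1/r}$, in the spirit of $H_{\gamma;J}$.

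The cleaner route is to invoke the ACK estimate \eqref{eq: ACK intro} with $J=I_\mu$ and $b=\zeta_\mu|\alpha|^2$. Here $\|b'\|_{L^1}\lesssim 1$, so each $\mu$-term is $\lesssim H_{\gamma;I_\mu}(x)^{-1}$. On the support of $\eta^\ell_\mu$ one would like $H_{\gamma;I_\mu}(x)\gtrsim \lambda$. That alone gives a bound of $\lambda^{-1}$ per $\mu$, which is too weak after summing over the $\sim\lambda$ values of $\mu$. So the sum over $\mu$ must be organised.

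The key point is therefore summation in $\mu$. I would aim for a refined per-piece bound
\[ \Big|\int e^{2\pi i\gamma(t)\cdot x}\zeta_\mu(t)|\alpha(t)|^2\,\ud t\Big| \lesssim \lambda^{-1}\big(1+\lambda^{-1}H_{\gamma;I_\mu}(x)\big)^{-1}, \]
obtained by integrating by parts at scale $(C\lambda)^{-1}$. Each derivative of $\zeta_\mu$ costs $\lambda$, which yields $\lambda^{-1}\cdot(\lambda/H)$. I would then show that $H_{\gamma;I_\mu}(x)$ grows at least like $\lambda\,\dist(t_\mu, Z(x))\cdot\lambda$ away from the $O(1)$ "critical" points $Z(x)$. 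These are the points where all $\gamma^{(r)}\cdot x$ are relatively small; there are at most $O(n)$ of them, since $\gamma^{(r)}\cdot x$ can have at most $n-r$ zeros locally by non-degeneracy. Given this growth, $\sum_\mu (1+|\mu-\mu_0|)^{-1}$ is logarithmic, which is still not summable. So I would instead use $\ell$-th order van der Corput decay, $(\lambda/H)^{\ell}$-type gains with $\ell\ge 2$ from repeated integration by parts, to get $(1+|\mu-\mu_0|)^{-2}$, as in the proof of Proposition~\ref{prop: low}. The $\ell=1$ pieces, where $|\gamma'(t)\cdot x|\gtrsim\lambda$ on $I_\mu$, are handled by non-stationary phase with arbitrarily many integrations by parts.

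The main obstacle is this geometric step. One must prove that as $\mu$ moves away from the few degenerate points of the phase $t\mapsto\gamma(t)\cdot x$, the quantity $\max_r|\gamma^{(r)}\cdot x|^{1/r}/\lambda$ grows like $|\mu-\mu_0|$, uniformly over $\fkG(B)$. I would attempt it by noting that $\phi(t)=\gamma(t)\cdot x$ has $|\phi^{(r)}|$ bounded below for some $r\le n$ at every point, by \eqref{eq: non-deg 1}. Polynomial-like (Pólya/Taylor) comparison on intervals of length $\varepsilon$ then shows that the sublevel sets $\{t:|\phi^{(r)}(t)|\le\delta\lambda^r\}$ consist of $O(1)$ intervals of length $O(\delta^{1/\ldots}\lambda^{-1})$. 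From this the required counting follows by summing over dyadic values of $H$.
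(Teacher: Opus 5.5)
Your proposal has a genuine gap at its core. Bounding each piece $P_\mu(x)=\int e^{2\pi i\gamma(t)\cdot x}\zeta_\mu(t)|\alpha(t)|^2\,\ud t$ in absolute value and summing over $\mu$ cannot give $\lambda^{-1}$, because $\sum_\mu\sum_\ell|\eta^\ell_\mu(x)|\,|P_\mu(x)|$ can be of size $1$.

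Take $n=2$ and $\gamma(t)=(t,t^2/2)$. Pick $t_1$ with $\alpha(t_1)\neq0$, and set $t_0=t_1-3/4$, $R=4C\lambda$ and $x=(-Rt_0,R)$. For $\lambda\geq4$ only $\ell=1$ contributes, and $\eta^1_\mu(x)=1-\eta_\circ(4(t_\mu-t_0))$, which equals $1$ whenever $|t_\mu-t_0|\geq1/2$. Substituting $t=t_\mu+u/(C\lambda)$, the phase on $I_\mu$ is linear up to $O(\lambda^{-1})$, with frequency $4(t_\mu-t_0)$ in $u$. Hence
\[
|P_\mu(x)|=(C\lambda)^{-1}\bigl(|\alpha(t_\mu)|^2\,|\widehat{\zeta}(4(t_\mu-t_0))|+O(\lambda^{-1})\bigr).
\]
Summing over the $\sim C\lambda$ indices with $1/2\leq|t_\mu-t_0|\leq1$ gives a Riemann sum for $\int_{1/2\leq|t-t_0|\leq1}|\alpha(t)|^2|\widehat{\zeta}(4(t-t_0))|\,\ud t$. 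This is a positive constant independent of $\lambda$, since the zeros of $\widehat{\zeta}$ are isolated.

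This is exactly an $\ell=1$ configuration, and it defeats each of your fallback steps:
\begin{itemize}
\item Here $|\gamma'(t)\cdot x|\sim C\lambda$ on $I_\mu$, which is the reciprocal width of $\zeta_\mu$. Each integration by parts therefore gains only an $O(1)$ factor, however many you perform.
\item Here $H_{\gamma;I_\mu}(x)\approx4C\lambda|t_\mu-t_0|$, so your desired growth $H_{\gamma;I_\mu}(x)\gtrsim\lambda^2\dist(t_\mu,Z(x))$ is false. The rate depends on $|x|$, not only on $\lambda$.
\item When $|x|\gg\lambda^2$, the bound $|\gamma'(t_\mu)\cdot x|>C\lambda$ need not persist on $I_\mu$ at all.
\end{itemize}
The proposition nevertheless holds at this $x$, because neighbouring pieces cancel. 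Re-summed, $\sum_\mu\eta^1_\mu(x)\zeta_\mu(t)$ is a bounded-variation cutoff supported where $|\gamma'(t)\cdot x|\gtrsim C\lambda$, and a single van der Corput estimate gives $O(\lambda^{-1})$.

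The missing idea is exactly this: do not split into individual $\mu$. The paper fixes $x$ and treats $t\mapsto\sum_\mu a^{\ell,m}_\mu(x;t)$ as one amplitude. It uses Lemma~\ref{lem: eta cutoff} (monotonicity of $t\mapsto\gamma^{(r)}(t)\cdot x$ for $r<m$) to cut $t$-space into $O_B(1)$ intervals $I$ with two properties. First, the cutoff factors $\eta_{\mu,r}$, $r<m$, evaluated at the relevant rescalings of $x$, are monotone in $\mu$ on $I$. Second, either $|I|\leq2\lambda^{-1}$ or $|\gamma^{(\ell)}(t)\cdot x|\geq\lambda^\ell/2$ on $I$, so that $H_{\gamma,I}(x)\gtrsim\lambda$.

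Since $\zeta_{\mu+1}'=-\zeta_\mu'$ on overlaps, the glued amplitude $b_I$ satisfies
\[
\|b_I'\|_{L^1}\lesssim1+\sum_\mu\bigl|\eta^\ell_{\mu+1}(x)\eta^{\ell,m}_{\mu+1}(x)-\eta^\ell_\mu(x)\eta^{\ell,m}_\mu(x)\bigr|.
\]
This total variation is $O_B(1)$, by the discrete Leibniz rule, the monotonicity of the factors, and an $O(\lambda^{-1})$ bound on the $\mu$-increments of the factors built from $\eta_{\mu,m}$. One application of van der Corput (or Theorem~\ref{thm: ACK}) on each of the $O_B(1)$ intervals then yields $\lambda^{-1}$. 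No sublevel-set counting over $\mu$ is needed.
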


Proposition~\ref{prop: high} relies on the classical argument used to prove Arkhipov--Chubarikov--Karatsuba oscillatory-type integral estimates \cite[p.13, Theorem 1.1]{ACK_book}, mentioned in the introduction. Recalling the definition in \eqref{eq: H functional} that
\[ H_{\gamma; J}(x) \coloneq \inf_{t \in J} \max_{1 \leq r \leq n} | x \cdot \gamma^{(r)}(t)|^{1/r}, \]
the following is a consequence of \cite[Theorem 3.1]{HS2025} (see also \cite{BGGIST2007}).

\begin{theorem} \label{thm: ACK}
    Let $\gamma \colon [-2,2] \to \R^n$ be a smooth, non-degenerate curve and $J \subseteq [-1,1]$ be an interval. If $b \in C_c^1(\R)$ satisfies $\supp{} b \subseteq J$, then
    \begin{equation*}%\label{eq: ACK}
        \bigg|\int_J e^{2 \pi i x \cdot \gamma(t)} b(t)\,\ud t\bigg| \lesssim \|b'\|_{L^1(\R)} H_{\gamma, J}(x)^{-1}\qquad \textrm{for all $x \in \R^n$.} 
    \end{equation*}
\end{theorem}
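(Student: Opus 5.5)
The plan is a van der Corput argument, using the non-degeneracy of $\gamma$ to control how often the derivatives of the phase can vanish.

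\textbf{Reduction to an unweighted bound.} Fix $x \in \R^n$ and write $\phi(t) \coloneq x \cdot \gamma(t)$ and $H \coloneq H_{\gamma;J}(x)$. Set $F(t) \coloneq \int_{\inf J}^t e^{2\pi i \phi(s)}\,\ud s$. Since $b$ is compactly supported in $J$, integration by parts gives
\begin{equation*}
\int_J e^{2\pi i \phi(t)} b(t)\,\ud t = -\int_J F(t)\, b'(t)\,\ud t .
\end{equation*}
So it suffices to show $\sup_{I \subseteq J} |\int_I e^{2\pi i \phi}| \lesssim H^{-1}$, the supremum taken over subintervals $I$. If $H \lesssim 1$, the trivial bound $|I| \leq 2$ already suffices. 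We may therefore assume $H$ is large.

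\textbf{Structural step: few zeros.} By \eqref{eq: nondegenerate}, $\gamma^{(1)}(t),\dots,\gamma^{(n)}(t)$ span $\R^n$ for every $t$. Hence $\gamma^{(n+1)}(t) = \sum_{r=1}^n c_r(t)\gamma^{(r)}(t)$ with smooth, bounded coefficients, so $\phi',\dots,\phi^{(n)}$ solve a linear ODE system whose coefficients do not depend on $x$. From this I will derive the following claim.
\begin{itemize}
\item[(Z)] There exist $\varepsilon>0$ and $N \in \N$, both depending only on $\gamma$, with the following property. On any interval of length at most $\varepsilon$, each function $\phi^{(r)} - c$ with $1 \leq r \leq n+1$ and $c \in \R$ has at most $N$ zeros, unless it vanishes identically.
\end{itemize}
To prove (Z), note that $\{1,\gamma_1,\dots,\gamma_n\}$ and their derivatives form extended Chebyshev systems on short intervals: the Wronskian is non-vanishing by \eqref{eq: nondegenerate}. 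Then apply Rolle's theorem. A compactness argument, for instance over the unit vectors $x/|x|$, makes $\varepsilon$ and $N$ uniform. I expect (Z) to be the main obstacle: one must check carefully that the zero count is uniform in $x$ and in the constant $c$.

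\textbf{Decomposition and van der Corput.} Partition $J$ into $O_\gamma(1)$ intervals of length at most $\varepsilon$. By the definition of $H$, every $t \in J$ satisfies $|\phi^{(r)}(t)| \geq H^r$ for some $1 \leq r \leq n$. By (Z) applied to $\phi^{(r)} \mp H^r$, each set $\{t : |\phi^{(r)}(t)| \geq H^r\}$ is a union of $O(1)$ intervals. Also, by (Z) applied to $\phi^{(r+1)}$, each $\phi^{(r)}$ changes monotonicity only $O(1)$ times.

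We can therefore cover each piece by $O(1)$ intervals $I'$ carrying a fixed index $r$ with $|\phi^{(r)}| \geq H^r$ on $I'$. When $r=1$, we may further arrange that $\phi'$ is monotone on $I'$. The van der Corput lemma then gives $|\int_{I' \cap I} e^{2\pi i\phi}| \lesssim_r (H^r)^{-1/r} = H^{-1}$. Summing over the $O_\gamma(1)$ intervals yields the bound $\lesssim H^{-1}$, and inserting this into the integration-by-parts identity completes the argument.
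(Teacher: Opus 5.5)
Your integration-by-parts reduction to $\sup_{I\subseteq J}|\int_I e^{2\pi i\phi}|\lesssim H^{-1}$ and the closing appeal to van der Corput follow the same route as the paper, which imports this from \cite{HS2025}. The gap is that the structural claim (Z) is false, so the cover you build from it need not exist.

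\textbf{Why (Z) fails.} Non-degeneracy \eqref{eq: nondegenerate} says exactly that the Wronskian of $\gamma_1',\dots,\gamma_n'$ is non-zero. On short intervals this bounds the zeros of $\phi'$ and of $\phi-c$, and nothing more. The system $1,\gamma_1^{(r)},\dots,\gamma_n^{(r)}$ has Wronskian $\det(\gamma^{(r+1)},\dots,\gamma^{(r+n)})$, which \eqref{eq: nondegenerate} does not control. Also, Rolle's theorem bounds the zeros of $f$ by those of $f'$, not conversely.

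\textbf{A concrete counterexample.} Let $h(t)= t+\epsilon\int_0^t e^{-1/u^2}\sin(1/u)\,\ud u$ with $\epsilon>0$ small, let $\gamma(t)=(h(t),h(t)^2/2)$, and take $J=[-1,1]$. Then $\det(\gamma',\gamma'')=h'(t)^3\neq 0$, so $\gamma$ is non-degenerate. For $x=Le_1$ we get $\phi=Lh$. Hence $\phi'-L=L\epsilon e^{-1/t^2}\sin(1/t)$ and $\phi''$ both have infinitely many zeros accumulating at $0$.

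Worse, once $L$ is large we have $H=L\inf_J h'$ while $|\phi''|\lesssim \epsilon L<H^2$. So $r=1$ is the only admissible index at every point. Yet $\phi'=Lh'$ is monotone on no interval having $0$ in its closure. Hence no cover by $O(1)$ intervals of the kind you describe exists. The estimate itself holds here by non-stationary phase, so it is the method that breaks.

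\textbf{The missing idea.} You need to anchor the zero counting at a derivative of size comparable to $|x|$. This is what Lemma~\ref{lem: eta cutoff}, taken from the proof of \cite[Theorem 3.1]{HS2025}, provides.

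By \eqref{eq: non-deg 1} and \eqref{eq: non-deg 2}, $J$ splits into $O_B(1)$ intervals. On each of them $|\phi^{(m)}|\geq (2B)^{-1}|x|$ for some fixed $1\leq m\leq n$; compare Step 2 of the proof of Proposition~\ref{prop: high}. If $B|x|<1$ then $H\leq 1$ and the trivial bound applies, so assume $B|x|\geq 1$.

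\emph{Case $m\geq 2$.} Here $\phi^{(m)}$ has no zeros, so Rolle applied downwards shows that $\phi^{(r)}$ has at most $m-r$ zeros for each $r<m$. Consequently:
\begin{enumerate}[(i)]
\item each $\phi^{(r)}$ with $r<m$ is monotone on $O(1)$ pieces;
\item the sets $\{|\phi^{(r)}|\geq H^r\}$ with $r<m$ are unions of $O(1)$ intervals;
\item on the remaining set, some $r'\geq m$ satisfies $H^{r'}\leq|\phi^{(r')}|\leq B|x|$, whence $H^m\lesssim_B |x|\lesssim_B|\phi^{(m)}|$.
\end{enumerate}
On the sets in (ii), van der Corput applies with the $r$th derivative, using the monotonicity from (i) when $r=1$. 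On the set in (iii), van der Corput applies with the $m$th derivative, and no monotonicity is needed.

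\emph{Case $m=1$.} Integrate by parts once:
\begin{equation*}
\Big|\int_I e^{2\pi i\phi}\Big|\lesssim_B |x|^{-1}+|I|\,|x|\,|x|^{-2}\lesssim_B H^{-1},
\end{equation*}
using $H\leq B|x|$.

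Replacing (Z) by this localisation, the rest of your argument goes through.
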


Indeed, Theorem~\ref{thm: ACK} follows from \cite[Theorem 3.1]{HS2025} by repeating the (standard) argument used to prove \cite[Corollary 3.2]{HS2025}. 

For the proof of Proposition~\ref{prop: high}, we do not use Theorem~\ref{thm: ACK} \textit{per se}, but rather the method of proof. This involves decomposing the domain into regions where classical van der Corput estimates (see, for example,  \cite[Chapter VIII, Proposition 2]{Stein_book}) can be applied. In our context, for the decomposition we rely on the following monotonicity property of the $\gamma^{(\ell)}(t)\cdot x$.

\begin{lemma}\label{lem: eta cutoff} There exists some $N \in \N$, depending only on $B$ and $n$, such that the following holds. Given $x \in \R^n$ and $I \subseteq [-1,1]$ an interval suppose that
\begin{equation*}%\label{eq: eta cutoff} 
    \inf_{t \in I}|\gamma^{(m)}(t)\cdot x| \geq c|x| \qquad \textrm{holds for some $c > 0$ and some $2 \leq m \leq n$.}
\end{equation*}
Then there exists a decomposition $\cI_x$ of $I$ into a family of intervals with the following properties:
\begin{enumerate}[i)]
    \item $\#\cI_x \leq N$;
    \item For $1 \leq r \leq m - 1$ and $I \in \cI_x$, the function $t \mapsto \gamma^{(r)}(t)\cdot x$ is monotone on $I$.
\end{enumerate}
\end{lemma}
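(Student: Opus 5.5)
Fix $x \neq 0$; by homogeneity we may normalise $|x| = 1$, so the hypothesis reads $|\gamma^{(m)}(t)\cdot x| \geq c$ on $I$. The idea is to bound the number of zeros of the functions $\phi_r(t) \coloneq \gamma^{(r)}(t)\cdot x$ for $1 \leq r \leq m-1$ on $I$, and then cut $I$ at these zeros. Note that $\phi_r' = \phi_{r+1}$, so $\phi_r$ is monotone on any interval on which $\phi_{r+1}$ does not vanish. The function $\phi_{m-1}$ is therefore already monotone on all of $I$, since its derivative $\phi_m$ has constant sign there: it is continuous and bounded away from zero in absolute value.

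Next, bound the zeros of $\phi_r$ for $r \leq m-1$ by downward induction using Rolle's theorem. Since $\phi_m = \phi_{m-1}'$ does not vanish on $I$, the function $\phi_{m-1}$ has at most one zero on $I$. By Rolle, if $\phi_{r+1}$ has at most $k$ zeros on $I$, then $\phi_r$ has at most $k+1$ zeros. Hence $\phi_r$ has at most $m-r$ zeros on $I$ for each $1 \leq r \leq m-1$.

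Now let $Z \subseteq I$ be the union over $2 \leq r \leq m$ of the zero sets of $\phi_r$ in $I$ (with $\phi_m$ contributing no zeros). Then
\[
\#Z \leq \sum_{r=2}^{m-1} (m-r) \leq \frac{n^2}{2}.
\]
Cutting $I$ at the points of $Z$ yields at most $\#Z + 1 \leq n^2$ intervals. On each such interval, every $\phi_{r+1}$ with $1 \leq r \leq m-1$ has no zero, hence constant sign, so $\phi_r$ is monotone. Taking $N = n^2$ gives (i) and (ii).

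The main point to check is that the zero counts are genuinely finite: this uses only the non-vanishing of $\phi_m$ on $I$ together with Rolle's theorem, with no multiplicity issues. At a zero of $\phi_r$ we do not need $\phi_r'$ to be nonzero, because Rolle bounds distinct zeros directly, via the existence of a zero of the derivative strictly between any two of them. Consequently $N$ depends only on $n$, which is in particular admissible. The constant $c$ and $B$ play no quantitative role beyond guaranteeing that $\phi_m \neq 0$ on $I$.
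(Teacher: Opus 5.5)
Your proof is correct. The paper gives no argument of its own for this lemma; it only remarks that the lemma follows from the proof of \cite[Theorem 3.1]{HS2025}. Your Rolle-type zero count makes this step self-contained. Since $\phi_m=\phi_{m-1}'$ has a fixed sign on $I$, downward induction shows that $\phi_r$ has at most $m-r$ zeros on $I$. Cutting at the zeros of $\phi_2,\dots,\phi_{m-1}$ then produces at most $\binom{m-1}{2}+1\le n^2$ pieces. Compared with extracting the statement from the proof of an ACK-type estimate, your route gives an explicit $N$ depending only on $n$, not on $B$ or $c$. It also uses nothing beyond $C^m$ regularity of $\gamma$ and the non-vanishing of $\phi_m$ on $I$.

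Two cosmetic points. First, dispatch the case $x=0$ separately: there all $\phi_r\equiv 0$, so $\cI_x=\{I\}$ works. Second, on a piece whose endpoint lies in your cut set, $\phi_{r+1}$ may vanish at that endpoint. The accurate statement is that $\phi_{r+1}$ has no zeros in the interior of the piece, hence a weakly constant sign on it. This still gives monotonicity of $\phi_r$ on the whole piece by continuity.
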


Lemma~\ref{lem: eta cutoff} follows from the proof of \cite[Theorem 3.1]{HS2025}.   

\begin{proof}[Proof (of Proposition~\ref{prop: high})] We assume without loss of generality that $\lambda > 2B^2$, where $B \geq 1$ is as in \eqref{eq: non-deg 1} and \eqref{eq: non-deg 2}, since the result trivially holds for $1 \leq \lambda \leq 2B^2$.\medskip

\noindent \textit{Step 1: Further decomposition.} For $\ell \leq m \leq n$ and $\mu \in \fkM(\lambda)$, define
\begin{equation}\label{eq: high 1}
    a_\mu^{\ell,m}(x;t) \coloneq a_\mu^{\ell}(x;t)\; \eta_\mu^{\ell, m}(x), 
\end{equation} 
where
\begin{equation}\label{eq: high 2}
\eta_\mu^{\ell, m}(x) \coloneq \left( 1 - \eta_{\mu,m} \left( C |x|^{-1} x \right) \right) \prod_{r = \ell}^{m-1} \eta_{\mu,r} \left( C |x|^{-1} x \right) .
\end{equation}
For all $1 \leq \ell \leq n$ and all $x \in \supp{} \eta_\mu^{\ell}$, we claim that there exists some $\ell \leq m_0 \leq n$ for which $|\gamma^{(m_0)}(t_\mu) \cdot x| \geq B^{-1} |x|$. Provided $C \geq 2B$, it then follows that
\begin{equation}\label{eq: high 3}
  \sum_{m = \ell}^n a_\mu^{\ell,m}(x;t) = a_\mu^{\ell}(x;t) \left(1 - \prod_{r = \ell}^n  \eta_{\mu,r} \left( C |x|^{-1} x \right) \right) = a_\mu^{\ell}(x;t).   
\end{equation}
Indeed, here the second equality follows because $\eta_{\mu,m_0} (C |x|^{-1} x ) = 0$. 

To verify the above claim, first note that \eqref{eq: non-deg 1} guarantees that there exists some $1 \leq m_0 \leq n$ such that $|\gamma^{(m_0)}(t_\mu) \cdot x| \geq B^{-1} |x|$. We claim that $\ell \leq m_0 \leq n$. To see this, note that for $x \in \supp{} \eta_\mu^{\ell}$ and $1 \leq r < \ell$, we have
\begin{equation*}
    |\gamma^{(r)}(t_\mu) \cdot x| \leq 2 C \lambda^r \quad\text{and}\quad |x| \geq B^{-1} |\gamma^{(\ell)}(t_\mu) \cdot x| \geq B^{-1} C \lambda^\ell.
\end{equation*} 
Provided $\lambda > 2 B^2$, it follows that
\begin{equation*}
    B^{-1} |x| \geq B^{-2} C \lambda^\ell \geq (B^{-2} C \lambda^{\ell - r}) (2C)^{-1} |\gamma^{(r)}(t_\mu) \cdot x| > |\gamma^{(r)}(t_\mu) \cdot x|.
\end{equation*} 
This concludes the proof of the claim and therefore establishes \eqref{eq: high 3}.

Having performed the decomposition \eqref{eq: high 3} of $a_\mu^\ell$ into the functions $a_\mu^{\ell,m}$, define
\begin{equation*}
     a^{\ell,m} \coloneq \sum_{\mu \in \fkM(\lambda)} a^{\ell,m}_\mu \qquad \textrm{so that} \qquad   K^\lambda_\high = \sum_{\ell = 1}^n \sum_{m = \ell}^n K[a^{\ell,m}].
\end{equation*}
Now fix $1 \leq \ell \leq m \leq n$. In light of the above, it suffices to show
\begin{equation*}
    |K[a^{\ell,m}](x)| \lesssim_B \lambda^{-1} \qquad \textrm{for all $x \in \supp{x} a^{\ell,m}$.}
\end{equation*}

\noindent \textit{Step 2: Monotonicity properties.} Fix $x \in \supp{x} a^{\ell,m}$ so that $x \in \supp{x} a^{\ell, m}_{\mu}$ for some $\mu \in \fkM(\lambda)$. Using the definitions \eqref{eq: high 1} and \eqref{eq: high 2}, together with the properties of $\eta_{\circ}$, it follows that
\begin{equation*}
  |\gamma^{(m)}(t_{\mu}) \cdot x| \geq C^{-1} |x|,  
\end{equation*}
 where $C \geq 2B$. If $t \in [-2,2]$ satisfies $|t - t_{\mu}| \leq C^{-2}$, then $|\gamma^{(m)}(t) \cdot x| \geq (2C)^{-1} |x|$. This follows immediately from the mean value theorem, using the bound $|\gamma^{(m+1)}(s) \cdot x| \leq B |x|$ for any $s \in [-2,2]$ from \eqref{eq: non-deg 2}. In view of the above observation, we may cover $\supp{t} a^{\ell,m}$ by disjoint intervals $J_1,\dots,J_N$, where $N = O_{\gamma}(1)$, such that
\begin{equation}\label{eq: high 4}
|\gamma^{(m)}(t) \cdot x| \geq (2C)^{-1} |x| \quad\text{for}\quad t \in J_1 \cup \dots \cup J_N.     
\end{equation} 
Applying Lemma~\ref{lem: eta cutoff} to each $J_i$, and taking the union over $1 \leq i \leq N$ of the families of intervals obtained from the conclusion of this lemma, we can find a family of intervals $\cI_x$ covering $\supp{t} a^{\ell,m}$ such that $\# \cI_x = O_B(1)$ and, for each $I \in \cI_x$ and $1 \leq r < m$, the function $t \mapsto \gamma^{(r)}(t) \cdot x$ is monotone on $I$.

We may decompose $\R$ into five intervals upon which $\eta_\circ$ is either strictly monotone or constant. Thus, by splitting each interval in $\cI_x$ into a bounded number of disjoint subintervals, we can pass to a further cover, which we relabel as $\cI_x$, with the following properties:
\begin{enumerate}[(1)]
    \item $ \# \cI_x  = O_B(1)$;
    \item For all $I \in \cI_x$ and all $1 \leq r < m$, the sequence
\begin{equation*}
 \fkM_I(\lambda) \coloneq \{ \eta_{\mu,r}(x) : \mu \in \fkM(\lambda)\ \text{and}\ t_\mu \in I \}    
\end{equation*} 
is monotone as a function of $\mu$.
\end{enumerate}
Furthermore, we claim that we can pass to a further cover so that the following additional property also holds:
\begin{enumerate}[(1)]\addtocounter{enumi}{2}
\item For all $I \in \cI_x$, either $|I| \leq 2\lambda^{-1}$ or 
\begin{equation} \label{eq: high 5}
    |\gamma^{(\ell)}(t) \cdot x| \geq \lambda^{\ell}/2 \qquad \textrm{for all $t \in I$.}
\end{equation}
\end{enumerate}
To ensure \eqref{eq: high 5} holds, we consider two cases.\medskip

\noindent \textit{Case 1: $\ell = m$.} Since $I$ is contained in one of the intervals $J_1,\dots,J_N$ discussed at the beginning of this decomposition, we know from \eqref{eq: high 4} that $|\gamma^{(\ell)}(t) \cdot x| \geq (2C)^{-1} |x|$ for $t \in I$. The definitions \eqref{eq: eta mu}, \eqref{eq: a high} and \eqref{eq: high 1}, together with the properties of $\eta_{\circ}$, imply $|x| \geq |\gamma^{(\ell)}(t_{\mu}) \cdot x| \geq C\lambda^{\ell}$. Thus, $|\gamma^{(\ell)}(t) \cdot x| \geq \lambda^\ell / 2$. So the desired bound \eqref{eq: high 5} holds automatically, without further partitioning.\medskip

\noindent \textit{Case 2: $\ell < m \leq n$.} Fix $I \in \cI_x$, so that $t \mapsto \gamma^{(\ell)}(t) \cdot x$ is monotone on $I$. Thus, the set $G_I \coloneq \{t \in I : |\gamma^{(\ell)}(t) \cdot x| \geq \lambda^{\ell}\}$ is the union of at most two intervals, lying at the ends of $I$. If the remaining interval $B_I \coloneq I \setminus G_I$ satisfies $|B_I| \leq 2\lambda^{-1}$, then we are done. Otherwise, we may decompose $B_I = B_I^- \cup S_I \cup B_I^+$ where $S_I$, $B_I^-$ and $B_I^+$ are all intervals, which satisfy $|B_I^-| = |B_I^+| = \lambda^{-1}$ and $\dist(S_I, G_I) \geq \lambda^{-1}$. If $|t - t_\mu| \leq \lambda^{-1}$ for some $\mu \in \fkM(\lambda)$, then $t_\mu \not \in G_I$. It follows that $\eta^{\ell}_\mu(x) = 0$ and hence $a^{\ell,m}_\mu(x;t) = 0$. This means that we can drop the interval $S_I$ from the collection and still have a covering of $\supp{t} a^{\ell, m}$. Thus, we form the new covering by replacing each $I \in \cI_x$ with the constituent intervals of $G_I$ together with $B_I$ if $|B_I| \leq 2 \lambda^{-1}$, or with $B_I^-$ and $B_I^-$ if $|B_I| \geq 2 \lambda^{-1}$.\medskip
 
We define
\begin{equation*}
a^{\ell,m}_I \coloneq \sum_{\mu \in \fkM_I(\lambda)} a^{\ell,m}_\mu \qquad \textrm{for $I \in \cI_x$.}  
\end{equation*}
It then suffices to prove that $|K[a^{\ell,m}_I](x)| \lesssim_B \lambda^{-1}$ for each $I \in \cI_x$.\medskip

\noindent \textit{Step 3: Oscillatory integral estimates.} Fix an interval $I \in \cI_x$. If $|I| \leq 2\lambda^{-1}$, then we immediately have $|K[a^{\ell,m}_I](x)| \lesssim_B \lambda^{-1}$, owing to the size of the $t$-support of $a^{\ell,m}_I$. On the other hand, if $|I| > 2\lambda^{-1}$, then it follows from property (3) above that \eqref{eq: high 5} holds. This implies that $H_{\gamma,I}(x) \geq \lambda$. 

Observe that
\begin{equation*}
    K[a^{\ell,m}_I](x) = \int_I e^{2 \pi i x \cdot \gamma(t)} b_I(t)\; \ud t \qquad \textrm{where} \qquad b_I(t) \coloneq \sum_{\mu \in \fkM_I(\lambda)} a^{\ell, m}_\mu(x;t).
\end{equation*}
Suppose we knew
\begin{equation}\label{eq: high 6}
   \| b_I' \|_{L^1([-2,2])} \lesssim_B 1.
\end{equation}
The required bound $|K[a^{\ell,m}_I](x)| \lesssim_B \lambda^{-1}$ then follows from Theorem~\ref{thm: ACK}. In fact, we can simply apply the classical van der Corput estimates (see, for example,  \cite[Chapter VIII, Proposition 2]{Stein_book}) here: the above decomposition essentially mirrors the argument used to reduce the proof of  Theorem~\ref{thm: ACK} to van der Corput's lemma.\medskip

\noindent \textit{Step 4: Bounding the amplitude.} It remains to prove \eqref{eq: high 6}. For this, we may assume $\fkM_I(\lambda)$ is nonempty. Write $\bar{\lambda} \coloneq C \lambda$. By considering the extrema of $\fkM_I(\lambda)$, we can find $\mu_0$, $\mu_1 \in \fkM_I(\lambda)$ such that $\supp{} b_I \subseteq [t_0 - \bar{\lambda}^{-1}, t_1 + \bar{\lambda}^{-1}]$ where $t_i \coloneq t_{\mu_i}$ for $i = 0$, $1$. For any interval $J \subseteq \R$ we have
\begin{equation*}
    \| b_I' \|_{L^1(J)} \leq |J| \| b_I' \|_{L^{\infty}(\R)} \lesssim_B |J|\bar{\lambda}.
\end{equation*}  
Applying this with $J = [t_0 - \bar{\lambda}^{-1}, t_0]$ and $J = [t_1, t_1 +  \bar{\lambda}^{-1}]$, we conclude $\| b_I' \|_{L^1(J)} \lesssim 1$ in either case. Matters are thus reduced to proving
\begin{equation}\label{eq: high 7}
    \| b_I' \|_{L^1([t_0,t_1])} \lesssim_B 1.
\end{equation}

A change of variables allows us to write 
\begin{align*}
    \| b_I' \|_{L^1([t_0,t_1])} = \int_0^{\bar{\lambda}^{-1}} \sum_{\mu = \mu_0}^{\mu_1 - 1} |b_I'(t_\mu + t)|\; \ud t.
\end{align*}
For each $\mu_0 \leq \mu \leq \mu_1$ and $t \in [0,\bar{\lambda}^{-1}]$, note that
\begin{equation*}
    b_I(t_{\mu} + t) = \alpha(t_{\mu} + t) \left(\zeta_\mu(t_{\mu} + t) \eta_\mu^{\ell}(x) \eta_\mu^{\ell, m}(x) + \zeta_{\mu+1}(t_{\mu} + t) \eta_{\mu+1}^{\ell}(x) \eta_{\mu+1}^{\ell, m}(x)\right). 
\end{equation*} 
We note also that $\zeta_{\mu}(t_{\mu} + t) + \zeta_{\mu+1}(t_{\mu} + t) = 1$ for $t \in [0,\bar{\lambda}^{-1}]$. Differentiating, we find that $\zeta_{\mu + 1}'(t_{\mu} + t) = - \zeta_{\mu}'(t_{\mu} + t)$ for $t \in (0,\bar{\lambda}^{-1})$. Thus,
\begin{equation}\label{eq: high 8}
     b_I'(t_{\mu} + t) = \zeta_{\mu+1}'(t_{\mu} + t) \left( \eta_{\mu+1}^{\ell}(x) \eta_{\mu+1}^{\ell, m}(x) - \eta_\mu^{\ell}(x) \eta_\mu^{\ell, m}(x) \right) + O\left( 1 \right). 
\end{equation}
Inserting the bound $|\zeta_{\mu+1}'(t_{\mu} + t)| \lesssim \bar{\lambda}$ and integrating, we conclude that
\begin{equation}\label{eq: high 9}
    \| b_I' \|_{L^1([t_0,t_1])} \lesssim_B 1 + \sum_{\mu = \mu_0}^{\mu_1 - 1} \left| \eta_{\mu+1}^{\ell}(x) \eta_{\mu+1}^{\ell, m}(x) - \eta_\mu^{\ell}(x) \eta_\mu^{\ell, m}(x) \right|. 
\end{equation} 
Here we have used the bound $\mu_1 - \mu_0 \lesssim \bar{\lambda}$ to control the contribution from the $O(1)$ term in \eqref{eq: high 8}. Thus, to complete the proof it suffices to prove the sum on the right-hand side of \eqref{eq: high 9} is $O(1)$.

For $\ell < m \leq n$, define
\begin{align*}
    A_\mu &\coloneq (1 - \eta_{\mu,\ell}( C^{-1} \lambda^{-\ell} x )) \prod_{r = 1}^{\ell - 1} \eta_{\mu,r}( C^{-1} \lambda^{-r} x) \prod_{r = \ell}^{m - 1} \eta_{\mu,r}( C |x|^{-1} x ),\\
    B_\mu &\coloneq 1 - \eta_{\mu,m}( C |x|^{-1} x ) ,
\end{align*}
and for $\ell = m$ define
\begin{align*}
    A_\mu &\coloneq \prod_{r = 1}^{\ell - 1} \eta_{\mu,r}( C^{-1} \lambda^{-r} x),\\
    B_\mu &\coloneq (1 - \eta_{\mu,m}( C |x|^{-1} x )) (1 - \eta_{\mu,m}( C^{-1} \lambda^{-\ell} x )),
\end{align*}
In either case, $\eta_\mu^{\ell}(x) \eta_\mu^{\ell, m}(x) = A_\mu B_\mu$. Note that we may write
\begin{equation}\label{eq: high 10}
   A_\mu = \prod_{i = 1}^N a_{i, \mu} 
\end{equation}
for some $1 \leq N \leq n$ and monotone real sequences $( a_{i,\mu})_{\mu = \mu_0}^{\mu_1}$ taking values in $[0,1]$ for $1 \leq i \leq N$ (indeed, these monotone quantities are precisely the terms in the definition of $A_\mu$ in either case). On the other hand, we claim that for all $\mu \in \fkM_I(\lambda)$, we have $|B_{\mu+1} - B_\mu| \lesssim_B \lambda^{-1}$. This would follow from showing that for all $\mu \in \fkM_I(\lambda)$, and all $R > 0$,
\[ |\eta_{\mu+1,m}( R^{-1} x ) - \eta_{\mu,m}( R^{-1} x )| \lesssim_B \lambda^{-1}. \]
We note that $|\gamma^{(m)}(t_\mu) \cdot x| \geq C |x|$ and $|\gamma^{(m)}(t_{\mu+1}) \cdot x| \geq C |x|$ for all $\mu \in \fkM_I(\lambda)$. So if $R \leq |x|/2$, then $\eta_{\mu,m}( R^{-1} x) = \eta_{\mu+1,m} ( R^{-1} x ) = 0$, and the claim is trivial. On the other hand, if $R \geq |x| / 2$, then using the mean-value theorem, together with \eqref{eq: non-deg 2}, we find
\begin{align*}
    |\eta_{\mu+1,m}( R^{-1} x ) - \eta_{\mu,m}( R^{-1} x )| &\lesssim R^{-1} |(\gamma^{(m)}(t_{\mu+1}) - \gamma^{(m)}(t_\mu)) \cdot x|\\
    &\leq R^{-1}\; B|x| \lambda^{-1}\\
    &\leq 2B \lambda^{-1} \lesssim_B \lambda^{-1}.
\end{align*}
So the bound continues to hold in this case.

Combining the bound $|B_{\mu+1} - B_\mu| \lesssim \lambda^{-1}$ with the bounds $|A_{\mu+1}| \leq 1$ and $|B_\mu| \leq 1$, we deduce that
\begin{align}
\nonumber
    \sum_{\mu = \mu_0}^{\mu_1 - 1} \left| \eta_{\mu+1}^{\ell}(x) \eta_{\mu+1}^{\ell, m}(x) - \eta_\mu^{\ell}(x) \eta_\mu^{\ell, m}(x) \right|
    &\leq \sum_{\mu = \mu_0}^{\mu_1 - 1} |A_{\mu+1}| |B_{\mu+1} - B_\mu| + |A_{\mu+1} - A_\mu| |B_\mu|\\
    \label{eq: high 11}
    &\lesssim_B 1 + \sum_{\mu = \mu_0}^{\mu_1 - 1} |A_{\mu+1} - A_\mu|.
\end{align}

Using the formula in \eqref{eq: high 10}, the discrete Leibnitz rule and the triangle inequality,
    \begin{align}
        \sum_{\mu = \mu_0}^{\mu_1 - 1} |A_{\mu + 1} - A_\mu| &\leq \sum_{i = 1}^N \sum_{\mu = \mu_0}^{\mu_1 - 1} |a_{i,\mu+1} - a_{i,\mu}|  \prod_{j = 1}^{i-1} |a_{j,\mu}| \prod_{j = i+1}^N |a_{j,\mu+1}|\nonumber\\
        &\leq \sum_{i = 1}^N \sum_{\mu = \mu_0}^{\mu_1 - 1} |a_{i,\mu+1} - a_{i,\mu}|. \label{eq: high 12}
    \end{align}
    The monotonicity of the sequence $( a_{i,\mu})_{\mu = \mu_0}^{\mu_1}$ implies
    \begin{equation} \label{eq: high 13}
        \sum_{\mu = \mu_0}^{\mu_1 - 1} |a_{i,\mu+1} - a_{i,\mu}| = |a_{i,\mu_1} - a_{i,\mu_0}| \leq 2.
    \end{equation}
    Combining \eqref{eq: high 11}, \eqref{eq: high 12}, \eqref{eq: high 13} with \eqref{eq: high 9} completes the proof of \eqref{eq: high 7}.
\end{proof}

%%%%%%%%%%%%%%%%%%%%%%%%%%%%%%%%%%%%%%%%%%%%%%%%%%%%%%%%%%%%%%%%%%%%%%%%%%%%%%%%%%%%%%%%%%%%%%%%

%                                          Proof of the weak-type bound: combining the estimates

%%%%%%%%%%%%%%%%%%%%%%%%%%%%%%%%%%%%%%%%%%%%%%%%%%%%%%%%%%%%%%%%%%%%%%%%%%%%%%%%%%%%%%%%%%%%%%%%

\subsection{Proof of the weak-type bound: combining the estimates}\label{subsec: combining} We now combine the results of the previous subsections to prove Proposition~\ref{prop: weak endpoint}. 

\begin{proof}[Proof (of Proposition~\ref{prop: weak endpoint})] Via the standard classification of Lorentz norm bounds via layer cake decompositions (see, for instance, \cite[Chapter V, Theorem 3.13]{MR304972}), prove \eqref{eq: weak endpoint} we may assume without loss of generality that $g = \chi_E$ is a characteristic function of a measurable set $E \subset \R^n$ of finite measure. It then suffices to show
\begin{equation}\label{eq: Lp combine 2}
    \inn{|U_\gamma \chi_E|}{\chi_F} \lesssim_{B, a} |E|^{1 - \frac{1}{n(n+1)}} |F|^{1 - \frac{1}{n(n+1)}}
\end{equation}
holds whenever $a \in \fkA^{\ell}(\gamma)$ for some $1 \leq \ell \leq n$ and $E$, $F \subseteq \R^n$ are measurable sets of finite measure. Indeed, \eqref{eq: Lp combine 2} implies \eqref{eq: weak endpoint} for $g = \chi_E$ by taking $F$ to be a superlevel set of $|U_\gamma \chi_E|$. 

Turning to the proof of \eqref{eq: Lp combine 2}, we first consider the case $|E||F| \geq 1$; as shown below, the remaining case can be dealt with using trivial estimates.

For any choice of $\lambda \geq 1$, we can split the operator into two parts, \textit{viz.}
\begin{equation*}
     \inn{|U_\gamma \chi_E|}{\chi_F} \leq  \inn{|U_{\low}^{\lambda} \chi_E|}{\chi_F} +  \inn{|U_{\high}^{\lambda} \chi_E|}{\chi_F}.
\end{equation*}
By Proposition~\ref{prop: low} and the Cauchy--Schwarz inequality, 
\begin{align}
\inn{|U_{\low}^{\lambda} \chi_E|}{\chi_F}| &\leq \|K_{\low}^{\lambda} \ast \chi_E\|_{L^2(\R^n)} |F|^{1/2}
\label{eq: Lp combine 3} \lesssim_B \lambda^{\frac{n(n+1)}{2} - 1} |E|^{1/2}|F|^{1/2}.
\end{align}
On the other hand, by Proposition~\ref{prop: high} and Young's convolution inequality, we have
\begin{equation}\label{eq: Lp combine 4}
    \inn{|U_{\high}^{\lambda} \chi_E|}{\chi_F}| \leq \|K_{\high}^{\lambda} \ast \chi_E\|_{L^{\infty}(\R^n)} |F| \lesssim_B \lambda^{-1} |E||F|.
\end{equation}
If we take $\lambda = |E|^{\frac{1}{n(n+1)}}|F|^{\frac{1}{n(n+1)}} \geq 1$, then the right-hand sides of both \eqref{eq: Lp combine 3} and \eqref{eq: Lp combine 4} agree. From this, we deduce that 
\begin{equation*}
 \inn{|U_\gamma \chi_E|}{\chi_F}| \lesssim_{B, a} |E|^{1 - \frac{1}{n(n+1)}}|F|^{1 - \frac{1}{n(n+1)}},
\end{equation*}
as required. 

It remains to consider the case  $|E||F| \leq 1$. Trivially, we have
\begin{equation*}
   \inn{|U_\gamma \chi_E|}{\chi_F} \leq \| K_\gamma \ast \chi_E \|_{L^{\infty}(\R^n)}|F| \leq \| K_\gamma \|_{L^{\infty}(\R^n)} |E||F|.
\end{equation*}
It is clear from the definitions that $\| K_\gamma \|_{L^\infty(\R^n)} \lesssim_B 1$, and so
\begin{equation*}
   \inn{|U_\gamma \chi_E|}{\chi_F} \lesssim_{B, a} |E||F| \leq  |E|^{1 - \frac{1}{n(n+1)}}|F|^{1 - \frac{1}{n(n+1)}},
\end{equation*}
where the last step uses the hypothesis $|E||F| \leq 1$.
\end{proof}

%%%%%%%%%%%%%%%%%%%%%%%%%%%%%%%%%%%%%%%%%%%%%%%%%%%%%%%%%%%%%%%%%%%%%%%%%%%%%%%%%%%%%%%%%%%%%%%%

%    Strong-type upgrade: the Bak--Seeger argument

%%%%%%%%%%%%%%%%%%%%%%%%%%%%%%%%%%%%%%%%%%%%%%%%%%%%%%%%%%%%%%%%%%%%%%%%%%%%%%%%%%%%%%%%%%%%%%%%

\subsection{Strong-type upgrade: the Bak--Seeger argument}\label{sub:strong_type_upgrade_the_bak_seeger_argument}

It remains to upgrade the weak-type inequality from Proposition~\ref{prop: weak endpoint} to the strong-type inequality from Theorem~\ref{thm: L2 endpoint}. As discussed above, this is achieved via a modification of an argument due to Bak--Seeger~\cite{BS2011}.

\begin{proof}[Proof (of Theorem~\ref{thm: L2 endpoint})] Recalling $U_{\gamma} \coloneq \cE_{\gamma}\cE_{\gamma}^*$, it suffices to show 
\begin{equation}\label{eq: B-S 1}
    \|U_\gamma f\|_{L^{q_n}(\R^n)}  \lesssim_{B, a} \|f\|_{L^{q_n'}(\R^n)}.
\end{equation}
Once \eqref{eq: B-S 1} is established, we can immediately deduce Theorem~\ref{thm: L2 endpoint} via a standard duality argument. Indeed, assuming \eqref{eq: B-S 1}, we calculate that
\begin{equation*}
    \|\cE_{\gamma}^* g\|_{L^2(\R^n)}^2 = \inn{\cE_{\gamma} \cE_{\gamma}^* g}{g} \leq \|\cE_{\gamma}\cE_{\gamma}^* g\|_{L^{q_n}(\R^n)}\|g\|_{L^{q_n', 1}(\R^n)} \lesssim_B \|g\|_{L^{q_n'}(\R^n)}^2. 
\end{equation*}
By duality, we conclude $\cE_\gamma$ is bounded from $L^2(\R^n)$ to $L^{q_n}(\R)$, as required.

Let $E$, $F \subseteq \R^n$ be measurable sets of finite measure. We shall show 
\begin{equation}\label{eq: B-S 2}
 |\inn{U_{\gamma}\chi_E}{\chi_F}| \lesssim_B |E|^{1/\rho_n}|F|^{1/\sigma_n'},
\end{equation}
where
\begin{equation*}
    \rho_n' \coloneq \frac{(n^2 + n + 2)n(n+1)}{4} \qquad \textrm{and} \qquad \sigma_n \coloneq \frac{n^2 + n + 2}{2}. 
\end{equation*}
Since the class of operators $U_{\gamma}$ is closed under taking adjoints, we also conclude from \eqref{eq: B-S 2}, with $E$ and $F$ swapped, that
\begin{equation} \label{eq: B-S 3}
    |\inn{U_{\gamma}\chi_E}{\chi_F}| = |\inn{\chi_E}{U_{\gamma}^*\chi_F}| \lesssim |E|^{1/\sigma_n'} |F|^{1/\rho_n}.
\end{equation}
A simple computation shows that the exponents $\rho_n$ and $\sigma_n$ satisfy
\begin{equation*}
    \begin{pmatrix}
        \frac{1}{q_n'} \\[4pt]
        \frac{1}{q_n}
    \end{pmatrix}
    = \frac{1}{2}
    \begin{pmatrix}
        \frac{1}{\rho_n} \\[4pt]
        \frac{1}{\sigma_n}
    \end{pmatrix} +
    \frac{1}{2}
    \begin{pmatrix}
        \frac{1}{\sigma_n'} \\[4pt]
        \frac{1}{\rho_n'}
    \end{pmatrix}.
\end{equation*}
Consequently, the desired inequality \eqref{eq: B-S 1} follows by the Marcinkiewicz interpolation theorem, using the restricted weak-type estimates \eqref{eq: B-S 2} and \eqref{eq: B-S 3}.

Turning to the proof of \eqref{eq: B-S 2}, first consider the case $|E|^{1/n(n+1)}|F|^{1/2} \leq 1$. Here \eqref{eq: B-S 2} holds due to trivial considerations. Indeed, as in the proof of Proposition~\ref{prop: weak endpoint}, we have $|\inn{U_{\gamma}\chi_E}{\chi_F}| \lesssim_B |E||F|$, which allows us to bound
\begin{equation*}
|\inn{U_{\gamma}\chi_E}{\chi_F}| \lesssim_B |E|^{1/\rho_n}|E|^{1/\rho_n'}|F| \leq |E|^{1/\rho_n} |F|^{- \frac{n(n+1)}{2\rho_n'}} |F| = |E|^{1/\rho_n}|F|^{1/\sigma_n'}, 
\end{equation*}
as required. 

Henceforth, assume $|E|^{1/n(n+1)}|F|^{1/2} \geq 1$. For $\lambda \geq 1$, by the Cauchy--Schwarz inequality,
\begin{equation}\label{eq: B-S 4}
    |\inn{U_{\low}^{\lambda}\chi_E}{\chi_F}| \leq \|U_{\low}^{\lambda}\chi_E\|_{L^2(\R^n)}|F|^{1/2}.
\end{equation}
It follows from Proposition~\ref{prop: low} that the multiplier $m_{\low}^{\lambda}$ associated to $U_{\low}^{\lambda}$ satisfies 
\begin{equation*}
   \|m_{\low}^{\lambda}\|_{L^{\infty}(\widehat{\R}^n)} = \|U_{\low}^{\lambda}\|_{L^2(\R^n) \to L^2(\R^n)}\lesssim_B \lambda^{\frac{n(n+1)}{2} - 1}. 
\end{equation*}
 Thus, by Plancherel's theorem,
\begin{equation} \label{eq: B-S 5}
    \|U^\lambda_{\low}\chi_E\|_{L^2(\R^n)}^2 \lesssim_B \lambda^{\frac{n(n+1)}{2} - 1} \int_{\widehat{\R}^n} |m^{\lambda}_{\low}(\xi)| |\widehat{\chi}_E(\xi)|^2\,\ud \xi.
\end{equation}

For $t \in [-2,2]$, let $M_t$ be the matrix with columns $\gamma^{(1)}(t), \dots, \gamma^{(n)}(t)$, and $D_\lambda$ the diagonal matrix satisfying $D_{\lambda}e_j = \lambda^j e_j$ for $1 \leq j \leq n$. For $\xi \in \widehat{\R}^n$, define the perturbed curves 
\begin{equation*}
    \gamma^\lambda(\,\cdot\,;\xi) \colon [-2,2] \to \R^n, \qquad \gamma^\lambda(t;\xi) \coloneq \gamma(t) - M_t D_\lambda^{-1} \xi \qquad \textrm{for $t \in [-2,2]$.}
\end{equation*}
Note that there exists some $c > 0$, depending only on $n$ and $\gamma$, such that for $|\xi| \leq c \lambda$ the curve $\gamma^\lambda(\,\cdot\,;\xi)$ lies in $\fkG(2B)$. We shall show in Lemma~\ref{lem: foliate} below that
\begin{equation} \label{eq: B-S 6}
    \int_{\widehat{\R}^n} |m^{\lambda}_{\low}(\xi)| |\widehat{\chi}_E(\xi)|^2\,\ud \xi \lesssim_{B, N} \int_{\R^n} \int_{\R} | \widehat{\chi}_E( \gamma^\lambda(t; \xi) ) |^2 |\alpha(t)|^2\; \ud t\, (1 + |\xi|)^{-N}\, \ud \xi
\end{equation}
for all $N \in \N_0$. 

We decompose the right-hand integral in \eqref{eq: B-S 6} into two regions: $|\xi| \leq c \lambda$ and $|\xi| > c \lambda$. For a fixed $t \in [-2,2]$, by the Riemann--Lebesgue Lemma,
\begin{equation} \label{eq:1o2ij312oije21oije21oilabel}
    \int_{|\xi| > c \lambda} | \widehat{\chi}_E( \gamma^\lambda(t; \xi) ) |^2 (1 + |\xi|)^{-N}\, \ud \xi \lesssim \lambda^{-N} \| \widehat{\chi}_E \|_{L^\infty(\R^n)}^2 \leq \lambda^{-N} |E|^2.
\end{equation}
Plancherel's theorem, after a change of variables $\xi \mapsto M_t^{-1} D_\lambda \xi$, guarantees that
\begin{align}
    \int_{|\xi| > c \lambda} | \widehat{\chi}_E( \gamma^\lambda(t; \xi) ) |^2 (1 + |\xi|)^{-N}\, \ud \xi &\lesssim_B \lambda^{\frac{n(n+1)}{2} -N} \int_{\widehat{\R}^n} |\widehat{\chi}_E(\xi)|^2\, \ud \xi \nonumber\\
    &\lesssim \lambda^{\frac{n(n+1)}{2} -N} |E| \label{eq:123pk4j1odije2of233421341412label}
\end{align}
Thus combining \eqref{eq:1o2ij312oije21oije21oilabel} and \eqref{eq:123pk4j1odije2of233421341412label}, for the global contribution, provided $N = O(1)$ is sufficiently large, 
\begin{align}
  \int_{|\xi| > c \lambda}  \int_{\R} & | \widehat{\chi}_E( \gamma^\lambda(t; \xi) ) |^2 |\alpha(t)|^2\; \ud t\, (1 + |\xi|)^{-N}\, \ud \xi \nonumber\\
  &\quad\quad\quad\quad\quad \lesssim_B \lambda^{-\frac{n(n+1)}{2} - 1} \min\{|E|, 1\}|E| \label{eq: B-S 7}.  
\end{align}

For the local contribution, fixing $|\xi| \leq c \lambda$, it follows by duality that
\begin{equation*}
  \int_{\R} |\widehat{\chi}_E(\gamma^\lambda(t;\xi))|^2 |\alpha(t)|^2\; \ud t  = \int_{\R^n} \overline{\chi_E(x)} (U_{\gamma^\lambda(\;\cdot\;;\xi)} \chi_E)(x)\; \ud x.  
\end{equation*}
Thus, by the Lorentz-space H\"{o}lder inequality and Proposition~\ref{prop: weak endpoint} applied to the perturbed curve $\gamma^\lambda(\; \cdot\; ; \xi)$, which lies in $\fkG(2B)$,
\begin{align}
    \int_{\R} |\widehat{\chi}_E(\gamma^\lambda(t;\xi))|^2|\alpha(t)|^2\; \ud t &\lesssim \| \chi_E \|_{L^{q_n',1}(\R^n)} \| U_{\gamma^\lambda(\;\cdot\;;\xi)} \chi_E \|_{L^{q_n,\infty}(\R^n)} \nonumber\\
    &\lesssim_B |E|^{2/q_n'}. \label{eq: B-S 8}
\end{align}

Combining \eqref{eq: B-S 4}, \eqref{eq: B-S 5} and \eqref{eq: B-S 6} with \eqref{eq: B-S 7} and \eqref{eq: B-S 8} yields
\begin{equation}\label{eq: B-S 9}
\begin{split}
|\inn{U_{\low}^{\lambda}\chi_E}{\chi_F}| &\lesssim_B \lambda^{-1}\min\{|E|^{1/2}, 1\}|E|^{1/2}|F|^{1/2}\\
&\quad\quad\quad\quad\quad\quad + \lambda^{\frac{1}{2} \left( \frac{n(n+1)}{2} - 1 \right)} |E|^{1/q_n'}|F|^{1/2}.
\end{split}
\end{equation}
Using the hypothesis of the case, $|F|^{1/2} \leq |E|^{1/n(n+1)}|F|$ and so
\begin{equation*}
   \min\{|E|^{1/2}, 1\}|E|^{1/2}|F|^{1/2} \leq  \min\{|E|^{1/2}, 1\}|E|^{1/2 + 1/n(n+1)}|F| \leq |E||F|. 
\end{equation*}
Thus, \eqref{eq: B-S 9} implies
\begin{equation}\label{eq: B-S 10}
 |\inn{U_{\low}^{\lambda}\chi_E}{\chi_F}| \lesssim_B \lambda^{-1}|E||F| +   \lambda^{\frac{1}{2} \left( \frac{n(n+1)}{2} - 1 \right)} |E|^{1/q_n'}|F|^{1/2}.   
\end{equation}
On the other hand, we know from Proposition~\ref{prop: high} that
\begin{equation}\label{eq: B-S 11}
 |\inn{U_{\high}^{\lambda}\chi_E}{\chi_F}| \lesssim_B \lambda^{-1} |E||F|.   
\end{equation}

We are free to choose $\lambda \geq 1$ in both \eqref{eq: B-S 10} and \eqref{eq: B-S 11}. Since, by hypothesis, $|E|^{\frac{1}{n(n+1)}}|F|^{1/2} \geq 1$, we may take
\begin{equation*}
    \lambda = \left[ |E|^{\frac{1}{n(n+1)}}|F|^{1/2} \right]^{\frac{4}{n^2 + n + 2}}
\end{equation*}
so that the right-hand sides of \eqref{eq: B-S 9} and \eqref{eq: B-S 11} agree. From this, we immediately deduce \eqref{eq: B-S 3}, which concludes the proof. 
\end{proof}

\begin{lemma} \label{lem: foliate}
    For the multiplier $m^\lambda_\low$ defined above and $E \subseteq \R^n$ measurable with finite measure, we have
    \begin{equation*}
\int_{\widehat{\R}^n} |m^{\lambda}_{\low}(\xi)| |\widehat{\chi}_E(\xi)|^2\,\ud \xi \lesssim \int_{\R^n} \int_{\R}  | \widehat{\chi}_E( \gamma^\lambda(t; \xi) ) |^2 |\alpha(t)|^2\, \ud t\, (1 + |\xi|)^{-2n}\, \ud \xi.
    \end{equation*}
\end{lemma}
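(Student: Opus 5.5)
The plan is to bound the multiplier $m^\lambda_\low = \widehat{K}^\lambda_\low$ pointwise by an average of "bump functions adapted to anisotropic neighbourhoods of the curve", and then change variables to express the result through the perturbed curves $\gamma^\lambda(t;\xi)$. By \eqref{eq: low 1}, $\widehat{K}^\lambda_\mu(\xi) = \int \widehat{\eta}_\mu(\xi - \gamma(s))\zeta_\mu(s)|\alpha(s)|^2\,\ud s$. Here $\eta_\mu(x) = \phi(C^{-1} D_\lambda^{-1} M_{t_\mu}^{\top} x)$ with $\phi(y) \coloneq \prod_{r=1}^n \eta_\circ(y_r)$. A change of variables then gives
\begin{equation*}
\widehat{\eta}_\mu(\zeta) = C^n \lambda^{\frac{n(n+1)}{2}}|\det M_{t_\mu}|^{-1}\,\widehat{\phi}\big(C D_\lambda M_{t_\mu}^{-1}\zeta\big),
\end{equation*}
and therefore $|\widehat{\eta}_\mu(\zeta)| \lesssim_{B,N} \lambda^{\frac{n(n+1)}{2}}(1+|D_\lambda M_{t_\mu}^{-1}\zeta|)^{-N}$. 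Summing over $\mu$ and using the triangle inequality,
\begin{equation*}
|m^\lambda_\low(\xi)| \lesssim_{B,N} \lambda^{\frac{n(n+1)}{2}} \sum_{\mu} \int (1+|D_\lambda M_{t_\mu}^{-1}(\xi-\gamma(s))|)^{-N}\zeta_\mu(s)|\alpha(s)|^2\,\ud s.
\end{equation*}

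Next I would replace $t_\mu$ by $s$ in the matrix. For $|s-t_\mu|\le (C\lambda)^{-1}$, the matrices $D_\lambda M_{s}^{-1} M_{t_\mu} D_\lambda^{-1}$ and their inverses are bounded uniformly. This uses the Taylor expansion of $M_s$ about $t_\mu$: the $(i,j)$ entry of $M_{t_\mu}^{-1}M_s$ is of size $|s-t_\mu|^{j-i}$ when $j\ge i$, which is exactly compensated by the factor $\lambda^{i-j}$ coming from conjugation by $D_\lambda$. This step needs care (and the bounds \eqref{eq: non-deg 2}) but is a routine anisotropic-scaling computation. Accepting it, the weights are comparable and $\sum_\mu \zeta_\mu = 1$, so
\begin{equation*}
|m^\lambda_\low(\xi)| \lesssim \lambda^{\frac{n(n+1)}{2}}\int (1+|D_\lambda M_s^{-1}(\xi-\gamma(s))|)^{-2n}|\alpha(s)|^2\,\ud s .
\end{equation*}
One also has to make sure $|\alpha|^2$ survives, or is dominated by a slightly wider cutoff; I would keep it since $\zeta_\mu |\alpha|^2$ appears directly.

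Finally, insert this bound into $\int |m^\lambda_\low||\widehat{\chi}_E|^2$, apply Fubini, and for each fixed $s$ change variables $\xi = \gamma(s) - M_s D_\lambda^{-1}\omega$. The Jacobian is $|\det M_s|\lambda^{-\frac{n(n+1)}{2}}$, which exactly cancels the prefactor $\lambda^{\frac{n(n+1)}{2}}$ up to $O_B(1)$, and $\xi$ becomes $\gamma^\lambda(s;\omega)$. This produces $\int\int |\widehat{\chi}_E(\gamma^\lambda(s;\omega))|^2|\alpha(s)|^2\,\ud s\,(1+|\omega|)^{-2n}\,\ud\omega$, the sign convention of $\gamma^\lambda$ matching after $\omega\mapsto\omega$. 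The exponent $2n$, or any $N$, is arbitrary, since the decay of $\widehat{\phi}$ is Schwartz.

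The main obstacle is the matrix comparability in the second step: verifying that $D_\lambda M_s^{-1}M_{t_\mu}D_\lambda^{-1} = I + O(1)$ entrywise with a bounded inverse, uniformly for $|s-t_\mu|\lesssim\lambda^{-1}$. I would do this by writing $\gamma^{(j)}(s) = \sum_k \gamma^{(j+k)}(t_\mu)(s-t_\mu)^k/k! + O(|s-t_\mu|^{n-j+1})$ and expressing the higher derivatives $\gamma^{(j+k)}(t_\mu)$ in the basis given by the columns of $M_{t_\mu}$.
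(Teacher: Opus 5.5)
Your proposal is correct and is essentially the paper's proof. Both arguments use the same change of variables $\xi=\gamma^\lambda(t;\omega)$ and the same Taylor-expansion estimate $D_\lambda M_{t_\mu}^{-1}M_tD_\lambda^{-1}=I+O(C^{-1})$ for $|t-t_\mu|\le (C\lambda)^{-1}$; you compare the matrices before changing variables, the paper afterwards, and one small slip should be fixed: the $(i,j)$ entry of $M_{t_\mu}^{-1}M_s$ is $(s-t_\mu)^{i-j}/(i-j)!$ for $i\ge j$ plus an $O(|s-t_\mu|^{n-j+1})$ remainder, so the factor $\lambda^{i-j}$ compensates the entries on and below the diagonal, not those with $j\ge i$ as you wrote.
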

\begin{proof}
    For $\mu \in \fkM(\lambda)$, recall that $K^\lambda_\mu = K[a_\mu]$ and \eqref{eq: low 1}. By a change of variables,
\begin{equation}\label{eq: foliate 1}
\widehat{K}^\lambda_\mu(\xi)= \frac{\lambda^{\frac{n(n+1)}{2}}}{|\det M_{t_\mu} |} \int_{\R} \zeta_\mu(t) v^\lambda_\mu(\xi;t) |\alpha(t)|^2\, \ud t
\end{equation}
where
\begin{equation*}
v^\lambda_\mu(\xi;t) \coloneq u\left(D_\lambda M_{t_\mu}^{-1} (\xi - \gamma(t))\right) \quad \textrm{for} \quad   u(\xi) \coloneq C^n  \prod_{r = 1}^n \widecheck{\eta}_\circ \left( C \xi_r \right).  
\end{equation*} 
Using the rapid decay of the $\widecheck{\eta}_\circ$, we have
\begin{equation} \label{eq: foliate 2}
    | u(\xi) | \lesssim_{\gamma, N} (1 + |\xi|)^{-N} \quad\text{for all $N \in \N_0$.}
\end{equation}

For $E \subseteq \R^n$ measurable with finite measure, it follows from \eqref{eq: foliate 1} that
\begin{multline*}
   \int_{\widehat{\R}^n} |\widehat{\chi}_E(\xi)|^2 |\widehat{K}^\lambda_\low(\xi)|\; \ud \xi \\ \leq  \int_{\widehat{\R}^n} |\widehat{\chi}_E(\xi)|^2 \int_{\R}  \sum_{\mu \in \fkM(\lambda)} \frac{\lambda^{\frac{n(n+1)}{2}}}{|\det(M_{t_\mu})|} \left|\zeta_\mu(t) v^\lambda_\mu(\xi;t) \right||\alpha(t)|^2\, \ud t\, \ud \xi.
\end{multline*}
Recall that $\gamma^\lambda(t;\xi) \coloneq \gamma(t) - M_t D_\lambda^{-1} \xi$. By changing the order of integration and applying a change of variables, we may express the right-hand side of the above display as 
\begin{equation*}
    \int_{\widehat{\R}^n} \int_{\R} |\widehat{\chi}_E(\gamma^\lambda(t;\xi))|^2 \sum_{\mu \in \fkM(\lambda)} \frac{|\det M_t|}{|\det M_{t_\mu}|} \left| \zeta_\mu(t) \,u\left( D_\lambda M_{t_\mu}^{-1} M_t D_\lambda^{-1} \xi\right)  \right| |\alpha(t)|^2 \ud t\, \ud \xi.
\end{equation*}
Note that the Jacobian factor
satisfies $|\det M_t| |\det M_{t_\mu}|^{-1} \lesssim_{\gamma} 1$ for all $t \in [-2,2]$ and $\mu \in \fkM(\lambda)$. In particular, 
\begin{multline}\label{eq: foliate 3}
    \int_{\widehat{\R}^n} |\widehat{\chi}_E(\xi)|^2 |\widehat{K}^\lambda_\low(\xi)|\; \ud \xi \\ \lesssim_B \int_{\widehat{\R}^n} \int_{\R} |\widehat{\chi}_E(\gamma^\lambda(t;\xi))|^2 \sum_{\mu \in \fkM(\lambda)} \left| \zeta_\mu(t) \,u\left( D_\lambda M_{t_\mu}^{-1} M_t D_\lambda^{-1} \xi\right) \right||\alpha(t)|^2 \,\ud t\, \ud \xi  .
\end{multline}

We perform a Taylor expansion to estimate $D_\lambda M_{t_\mu}^{-1} M_t D_\lambda^{-1}$. Let $M_{t_{\mu}}^{(k)}$ be the matrix obtained by taking the $k$th derivative of $M_t$ entrywise and evaluating at $t = t_{\mu}$. Then $M_{t_{\mu}}^{(k)}$ has columns $\gamma^{(1+k)}(t_{\mu}), \dots, \gamma^{(n+k)}(t_{\mu})$. So for $1 \leq k \leq n-1$, we deduce that
\begin{equation*}
    M_{t_{\mu}}^{-1} M_{t_{\mu}}^{(k)} = \begin{pmatrix} 0 & * \\ I_{n-k} & * \end{pmatrix}. 
\end{equation*}
For $1 \leq a$, $b \leq n$, we conclude that $(D_\lambda M_{t_\mu}^{-1} M_{t_\mu}^{(k)} D_\lambda^{-1})_{ab} = \lambda^{a - b} (M_{t_\mu}^{-1} M_{t_\mu}^{(k)})_{ab}$ is equal to zero unless $1 \leq b \leq n - k$ and $a - b = k$ or $n - k <b  \leq n$. Thus, each entry of the matrix is $O(\lambda^k)$. On the other hand, for each $s \in [-2,2]$, every entry of the matrix $D_\lambda M_{t_{\mu}}^{-1} M_s^{(n)} D_\lambda^{-1}$ is $O_B(\lambda^{n-1})$. Taylor expansion allows us to write $D_\lambda M_{t_\mu}^{-1} M_t D_\lambda^{-1}$ as
\begin{equation*}
  I + \sum_{k = 1}^{n-1} D_\lambda M_{t_\mu}^{-1} M_{t_\mu}^{(k)} D_\lambda^{-1} \frac{(t - t_\mu)^k}{k!} + O \left( |t - t_\mu|^n \sup_{s \in [-2,2]} D_\lambda M_{t_{\mu}}^{-1} M_s^{(n)} D_\lambda^{-1} \right).   
\end{equation*} 
Suppose $t \in \supp{}\zeta_\mu$, so that $|t - t_\mu| \leq (C\lambda)^{-1}$. From this, we deduce that
\begin{equation}\label{eq: foliate 4}
    |D_\lambda M_{t_\mu}^{-1} M_t D_\lambda^{-1} - I| \lesssim_{\gamma} C^{-1} \quad \textrm{and therefore} \quad | (D_\lambda M_{t_\mu}^{-1} M_t D_\lambda^{-1}) \xi | \gtrsim |\xi|,
\end{equation}
provided $C \geq 1$ is chosen sufficiently large, depending only on $n$ and $B$.

The sets $\supp{}\zeta_\mu$ are boundedly overlapping over $\mu \in \fkM(\lambda)$ and $|\zeta_\mu(t)| \leq 1$ for all $t \in [-1,1]$. Combining these observations with \eqref{eq: foliate 2} and \eqref{eq: foliate 4}, it follows that
\begin{equation*}
    \sum_{\mu \in \fkM(\lambda)} \left| \zeta_\mu(t)\, u\left( D_\lambda M_{t_\mu}^{-1} M_t D_\lambda^{-1} \xi\right)  \right| \lesssim_{B, N} (1 + |\xi|)^{-N}. 
\end{equation*} 
Substituting this bound into \eqref{eq: foliate 3} completes the proof.
\end{proof}

%%%%%%%%%%%%%%%%%%%%%%%%%%%%%%%%%%%%%%%%%%%%%%%%%%%%%%%%%%%%%%%%%%%%%%%%%%%%%%%%%%%%%%%%%%%%%%%%

%    Square function bounds related to orthonormal Fourier restriction

%%%%%%%%%%%%%%%%%%%%%%%%%%%%%%%%%%%%%%%%%%%%%%%%%%%%%%%%%%%%%%%%%%%%%%%%%%%%%%%%%%%%%%%%%%%%%%%%

\section{Square functions related to orthonormal Fourier restriction}

The goal of this section is to prove Theorem~\ref{thm: square fn}. 

%%%%%%%%%%%%%%%%%%%%%%%%%%%%%%%%%%%%%%%%%%%%%%%%%%%%%%%%%%%%%%%%%%%%%%%%%%%%%%%%%%%%%%%%%%%%%%%%

%   Wave packets

%%%%%%%%%%%%%%%%%%%%%%%%%%%%%%%%%%%%%%%%%%%%%%%%%%%%%%%%%%%%%%%%%%%%%%%%%%%%%%%%%%%%%%%%%%%%%%%%

\subsection{Wave packets} We begin by giving the precise definition of a \textit{wave packet}, and the notion of \textit{direction-separated plates}, as featured in Theorem~\ref{thm: square fn}. Fix $\gamma \colon [-2,2] \to \R^n$ a smooth, nondegenerate curve.

\begin{definition} For $R \geq 1$ and $0 < \delta < 1$, an \textit{$(R, \delta)$-plate associated to $\gamma$} is a set of the form
\begin{equation*}
    T = \Big\{ x \in \R^n : \big|(x-x_T) \cdot \gamma^{(r)}(s_T)\big| < R\, \delta^{\frac{n-r}{n-1}} \textrm{ for $1 \leq r \leq n$} \Big\}
\end{equation*}
for some $x_T \in \R^n$ and $s_T \in [-1,1]$. We refer to $x_T$ as the \textit{centre} $T$ and $s_T$ as the \textit{direction} of $T$. If $R = 1$, then we simply refer to $T$ as $\delta$-plate associated to $\gamma$. 
\end{definition}

We now introduce the notion of a \textit{wave packet}, as featured in the statement of Theorem~\ref{thm: square fn}. For this purpose, fix a smooth, non-negative bump function $\zeta$ satisfying $\supp{} \zeta \subseteq [-1/2, 1/2]$ and normalised so that $\int_{\R} \zeta(t)^2\; \ud t = 1$. 

\begin{definition}\label{dfn: wp} Let $R \geq 1$ and $\delta \coloneq R^{- \frac{n-1}{n}}$ and let $T$ be a $(R, \delta)$-plate associated to $\gamma$. We define the \textit{$L^2$-normalised wavepacket associated to $T$} by the formula
\begin{equation}\label{eq: wp}
    g_T(s) \coloneq R^{1/2n}e^{-2\pi i x_T \cdot \gamma(s)} \zeta(R^{1/n}(s - s_T)) \qquad \textrm{for all $T \in \T$.}
\end{equation} 
\end{definition}

The system $(g_T)_{T \in \bdT}$ in Theorem~\ref{thm: square fn} is understood to be comprised of functions of the above form, associated to some fixed family of $(R, R^{- \frac{n-1}{n}})$-plates $\bdT$ for some fixed value $R \geq 1$. The plates in $\bdT$ are also assumed to be \textit{direction-separated}, in the following sense. 

\begin{definition}\label{dfn: dir sep} For $R \geq 1$ and $0 < \delta < 1$, let $T_1$, $T_2$ be $(R,\delta)$-plates associated to $\gamma$. We say $T_1$, $T_2$ are \textit{direction-separated} if $|s_{T_1} - s_{T_2}| > \delta^{\frac{1}{n-1}}$. If $\bdT$ is a set of $(R, \delta)$-plates, then we say $\bdT$ is  \textit{direction-separated} if $T_1$, $T_2$ are direction separated for any distinct $T_1$, $T_2 \in \bdT$.
\end{definition}

Suppose $\bdT$ is a direction-separated family of $(R, R^{- \frac{n-1}{n}})$-plates. By definition, the family of wave packets $(g_T)_{T\in \bdT}$ form an orthonormal family, since their supports are disjoint. The freedom to choose the oscillation parameter $x_T$ in \eqref{eq: wp} ensures that families of direction seperated wave packets form a rich class of testing functions. In this way, we can understand Theorem~\ref{thm: square fn} as partial progress towards Conjecture~\ref{conj: CDH}. 

%%%%%%%%%%%%%%%%%%%%%%%%%%%%%%%%%%%%%%%%%%%%%%%%%%%%%%%%%%%%%%%%%%%%%%%%%%%%%%%%%%%%%%%%%%%%%%%%

%   Reduction to a Kakeya-type estimate

%%%%%%%%%%%%%%%%%%%%%%%%%%%%%%%%%%%%%%%%%%%%%%%%%%%%%%%%%%%%%%%%%%%%%%%%%%%%%%%%%%%%%%%%%%%%%%%%

\subsection{Reduction to a Kakeya-type estimate} Theorem~\ref{thm: square fn} is a direct consequence of a geometric inequality for $\delta$-plates. The latter is valid under the following weak separation hypothesis.

\begin{definition} Let $0 < \delta < 1$ and $T_1$, $T_2$ be $\delta$-plates associated to $\gamma$. We say $T_1$, $T_2$  are \textit{essentially distinct} if either of the following hold:
\begin{enumerate}[i)]
\item $|s_{T_1} - s_{T_2}| > \delta^{\frac{1}{n-1}}$;
\item $|s_{T_1} - s_{T_2}| \leq \delta^{\frac{1}{n-1}}$ and
\begin{equation*}
    \max_{1 \leq r \leq n} \delta^{-\frac{n-r}{n-1}}|(x_{T_1} - x_{T_2}) \cdot \gamma^{(r)}(s_{T_i})| > 1 \qquad \textrm{for $i \in \{ 1, 2 \}$.}
\end{equation*}
\end{enumerate}
\end{definition}

Defining the exponents
\begin{equation*}
s_n \coloneq \frac{q_n}{2} = \frac{n(n+1)}{2} \quad \textrm{and} \quad    e(n; s) \coloneq \frac{n^2}{n-1}\cdot \frac{1}{s} - \frac{1}{n-1} \quad \textrm{for $s_n \leq s \leq \infty$,}
\end{equation*}
the key geometric inequality reads thus. 

\begin{proposition}\label{prop: Kakeya} For $s_n < s \leq \infty$, the inequality
\begin{equation*}
    \left\|\sum_{T \in \T} \nu_T\chi_T \right\|_{L^s(\R^n)} \lesssim_{\gamma, s} \delta^{e(n;s)} \left(\sum_{T \in \T} |\nu_T|^{s/n}\right)^{n/s}
\end{equation*}
holds whenever $\T$ is a family of essentially distinct $\delta$-plates associated to $\gamma$ and $(\nu_T)_{T \in \T}$ are complex coefficients. 
\end{proposition}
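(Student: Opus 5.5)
We will prove Proposition~\ref{prop: Kakeya} by a multilinear Kakeya estimate followed by a Bourgain--Guth broad/narrow reduction, as foreshadowed in the introduction. First record the endpoints.

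At $s=\infty$ we have $e(n;\infty)=-\frac{1}{n-1}$, and $\|\sum_T \nu_T\chi_T\|_{L^\infty}\le \sup_x\sum_{T\ni x}|\nu_T|$. Essentially distinct plates through a common point with directions in a fixed $\delta^{1/(n-1)}$-interval are $O(1)$ in number. Hence at most $O(\delta^{-1/(n-1)})$ plates pass through any point, and the bound follows from $\ell^{1/n\cdot\infty}$-type (i.e.\ $\ell^\infty$) control.

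The trivial $L^1$ bound uses $|T|\sim\delta^{\sum_r (n-r)/(n-1)}=\delta^{n/2}$, which lies outside the range. By interpolation (real interpolation on the coefficient side with $\ell^{s/n}$), it therefore suffices to prove a restricted weak-type estimate arbitrarily close to $s_n$, together with the $L^\infty$ endpoint.

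\textbf{Step 1 (multilinear estimate).} Consider $n$ subfamilies $\T_1,\dots,\T_n$ whose directions lie in intervals $J_1,\dots,J_n\subseteq[-1,1]$ that are pairwise separated by $\sim K^{-1}$. We aim for
\[
\Big\|\prod_{i=1}^n\Big|\sum_{T\in\T_i}\nu_T\chi_T\Big|^{1/n}\Big\|_{L^{s}} \lesssim_K \delta^{e(n;s)}\prod_i\Big(\sum_{T\in\T_i}|\nu_T|^{s/n}\Big)^{1/s}
\]
in the range down to $s=s_n$. The elementary argument is to expand the $n$-fold product and bound $|T_1\cap\cdots\cap T_n|$ for transverse plates.

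Each plate is contained in a slab of width $\delta^{\frac{n-1}{n-1}}=\delta$ in the direction $\gamma^{(1)}(s_T)$. The normals $\gamma^{(1)}(s_{T_i})$ of separated directions are linearly independent, with determinant $\gtrsim_K 1$ by non-degeneracy. Hence $|T_1\cap\dots\cap T_n|\lesssim_K\delta^n$, and for fixed $T_1,\dots,T_{n-1}$ the number of $T_n\in\T_n$ meeting the intersection is controlled using essential distinctness.

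Taking $s=\infty$ for each factor and interpolating with this ``Loomis--Whitney for slabs'' $L^{1}$-type product bound (in the form $\int\prod_i\chi_{T_i}\lesssim\delta^n$, summed) gives the multilinear inequality at exponent $s=s_n$ after checking that the numerology matches $e(n;s_n)=\frac{2n}{(n-1)(n+1)}-\frac1{n-1}$. If the straightforward count is off, the fallback is to use the finer intersection geometry, namely intersecting $n$ plates in the $\gamma^{(r)}$ frame.

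\textbf{Step 2 (broad/narrow).} Partition $[-1,1]$ into intervals $\tau$ of length $K^{-1}$ and set $f_\tau=\sum_{s_T\in\tau}\nu_T\chi_T$. At each point $x$, either $n$ intervals $\tau_1,\dots,\tau_n$ with mutual separation $\gtrsim K^{-1}$ satisfy $f_{\tau_i}(x)\ge K^{-n}f(x)$ (the broad case), or $f(x)\lesssim \max_{\tau'} f_{\tau'}(x)$, where $\tau'$ ranges over unions of $O(n)$ adjacent intervals (the narrow case).

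The broad part is bounded by $K^{O(1)}$ times the multilinear estimate, summed over $O(K^{n})$ tuples. For the narrow part, the plates with directions in an interval of length $K^{-1}$ centred at $s_0$ are rescaled by the affine map adapted to the frame $\gamma^{(r)}(s_0)$, with scaling $K^{-r}$ in direction $r$. Non-degenerate curves are stable under this rescaling, in the spirit of the class $\fkG(B)$: the rescaled curve lies in $\fkG(O(B))$, and the plates become essentially distinct $\delta'$-plates with $\delta'=\delta K^{n-1}$.

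\textbf{Step 3 (induction on scales).} Induct on $\delta$, with constant $C(\delta)$ uniform over $\fkG(O(B))$. The narrow contribution is
\[
\Big(\sum_{\tau}\|f_\tau\|_{L^s}^s\Big)^{1/s}\lesssim C(\delta K^{n-1})\,\mathrm{Jac}^{1/s}\,(\delta K^{n-1})^{e(n;s)}\Big(\sum_T|\nu_T|^{s/n}\Big)^{n/s},
\]
using $\ell^{s/n}\subseteq\ell^{s}$-type summation over $\tau$ (valid because $s/n\le s$). A computation shows the total power of $K$ is $K^{-\epsilon(s)}$ with $\epsilon(s)>0$ exactly when $s>s_n$. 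Choosing $K$ large closes the induction, which explains why the endpoint is lost.

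I expect the main obstacle to be Step 3's scaling computation, in particular confirming that the Jacobian and $\delta$-exponent gains beat the loss precisely for $s>s_n$, together with verifying that essential distinctness is preserved under rescaling. If the plain $\ell^s$ summation over $\tau$ is too lossy, I would instead keep $\ell^{s/n}$ on coefficients directly, since the $\tau$ are disjoint in $T$.
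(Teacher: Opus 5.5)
Your architecture matches the paper's proof:
\begin{itemize}
\item reduction to restricted-type bounds on an open range, and induction on $\delta$;
\item a Bourgain--Guth broad/narrow split at angular scale $K^{-1}=\eta^{1/(n-1)}$;
\item the $n$-linear bound from expanding $\int\prod_i\sum_{T_i\in\T_i}\chi_{T_i}$ with $|T_1\cap\dots\cap T_n|\lesssim\nu^{-1}\delta^n$, interpolated against the $L^\infty$ bound $\delta^{-1/(n-1)}$ from essential distinctness;
\item anisotropic rescaling to $\tilde\delta=\delta K^{n-1}$.
\end{itemize}
No fallback is needed in Step 1. Since $e(n;n)=1$, the slab count gives exactly $\nu^{-1/n}\delta\,\#\T$ at $s=n$, and log-convexity then gives the multilinear bound for every $n\le s\le\infty$. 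Your single global affine map for each $\T[\tau]$ is also legitimate, and slightly simpler than the paper's tiling by $\eta$-plates. The linear part of $\bdA_P$ is the same for all $P$ with a given direction. The proof of Lemma~\ref{lem: rescaling} never uses $T\cap P\neq\emptyset$. Both conditions in the definition of essential distinctness transform exactly. So Lemma~\ref{lem: Kakeya orthog} can be bypassed.

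Two points, however, would make the argument fail as written, and one normalisation needs correcting.

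\textbf{Transversality is only local.} It is false that $\gamma'(s_1),\dots,\gamma'(s_n)$ are linearly independent for separated $s_i$ merely by non-degeneracy. For example, $\gamma(t)=\tfrac{1}{10}(\cos 10t,\sin 10t)$ is non-degenerate, yet $\gamma'(s_1)\parallel\gamma'(s_2)$ when $|s_1-s_2|=\pi/10$. For such a broad pair the two plates are parallel $\delta\times 1$ rectangles, and the slab-intersection bound fails. The comparison \eqref{eq: non-deg trans} holds only when $\max_{i,j}|s_i-s_j|<\rho_B$. You must therefore first pigeonhole to an interval $I_\circ$ of length $\sim\rho_B$, as in \eqref{eq: b/n 0.5}, before running broad/narrow. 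The resulting $O_B(1)$ loss is harmless: the right-hand side still involves $\#\T$, and the loss is absorbed when $K$ and the induction constant are chosen.

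\textbf{The rescaled curve must stay in a fixed class.} Knowing only that the rescaled curve lies in $\fkG(O(B))$ does not close the induction. If the class parameter grows at each step, so do the broad-part constants and the required $K$. What you need is that for $K\geq K_0(B)$ the rescaled curve is $C^{2n}$-close to the moment curve. It then lies in a fixed class depending only on $n$, which may be assumed contained in $\fkG(B)$ (this is the content of Lemma~\ref{lem: resc curve}). The induction hypothesis can then be stated over the single class $\fkG(B)$.

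\textbf{Normalisation of the rescaling.} To land on unit-scale $\tilde\delta$-plates, the frame coordinate $\gamma^{(r)}(s_0)\cdot x$ must be multiplied by $K^{n-r}$. This is your $K^{-r}$ composed with an isotropic dilation by $K^{n}$. It contributes $K^{-n(n-1)/(2s)}$ in $L^s$. Together with $\tilde\delta^{e(n;s)}=\delta^{e(n;s)}K^{(n-1)e(n;s)}$ this gives the factor $K^{-(1-s_n/s)}$, which is the gain you anticipated.
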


We shall deduce Theorem~\ref{thm: square fn} from  Proposition~\ref{prop: Kakeya}, using the Arkhipov--Chubarikov--Karatsuba estimate in Theorem~\ref{thm: ACK}. The argument is somewhat similar to that used to prove \eqref{eq: ACK nec} in \cite[p.21, Theorem 1.3]{ACK_book} (see also \cite{dAW}). 

\begin{proof}[Proof (of Theorem~\ref{thm: square fn}, assuming Proposition~\ref{prop: Kakeya})] It suffices to prove the theorem for $q > q_n$ and $r = q/2n$, since the remaining cases follow by the nesting of $\ell^r$.

For $R \geq 1$ and $\delta \coloneq R^{- \frac{n-1}{n}}$, let $\bdT$ be a direction-separated family of $(R, \delta)$-plates  and $(\nu_T)_{T \in \bdT}$ a sequence of complex coefficients. For each $T \in \bdT$, let $J(T) \coloneq \{s \in \R : |s - s_T| \leq (2R^{1/n})^{-1}\}$. Then $J(T)$ is an interval containing the support of $g_T$. It follows from the Arkhipov--Chubarikov--Karatsuba estimate in Theorem~\ref{thm: ACK} that 
\begin{align}\nonumber
    |\cE_{\gamma} g_T(x)| &= R^{1/2n} \left| \int_{\R} e^{2 \pi i (x-x_T) \cdot \gamma(s)} \zeta(R^{1/n} (s - s_T))\; \ud s \right|\\
    \label{eq: Kak vs rest 1}
    &\lesssim R^{1/2n} \big(R^{1/n} + H_{\gamma, J(T)}(x - x_T)\big)^{-1}. 
\end{align}

To estimate the right-hand side of \eqref{eq: Kak vs rest 1}, we define the level sets
\begin{align*}
E_0[T] &\coloneq \big\{ x \in \R^n : H_{\gamma, J(T)}(x - x_T) \leq R^{1/n} \big\}, \\
E_{\ell}[T] &\coloneq \big\{ x \in \R^n : (2^{\ell-1}R)^{1/n} < H_{\gamma, J(T)}(x - x_T) \leq (2^{\ell}R)^{1/n} \big\}, \qquad \ell \in \N.
\end{align*}
For each $\ell \in \N_0$, let $R_{\ell} \coloneq 2^n2^{\ell}R$ and $\delta_{\ell} \coloneq R_{\ell}^{- \frac{n-1}{n}}$. For $0 < c_{\gamma} < 1$ a constant chosen sufficiently small, depending only on $\gamma$ and $n$, to satisfy the forthcoming requirements of the proof, let $\cJ_{\ell}(T)$ be a maximal $c_{\gamma}\delta_{\ell}^{1/(n-1)}$-separated subset of $J(T)$. We define $\bdT_{\ell}[T]$ to be the set of all $(R_{\ell}, \delta_{\ell})$-plates $T_{\ell}$ with common centre $x_{T_{\ell}} \coloneq x_T$ and directions $s_{T_{\ell}}$ varying over $\cJ_{\ell}(T)$. Note that $\#\bdT_{\ell}[T] = \# \cJ_{\ell}(T) \lesssim_{\gamma} 2^{\ell/n}$. We further define 
\begin{equation*}
\bdT_{\ell} \coloneq \bigcup_{T \in \bdT} \bdT_{\ell}[T].
\end{equation*}
The above definitions satisfy the following properties:
\begin{enumerate}
    \item For all $T \in \bdT$ and $\ell \in \N_0$, we have 
    \begin{equation}\label{eq: Kak vs rest 2}
    E_{\ell}[T] \subseteq \bigcup_{T_{\ell} \in \bdT_{\ell}[T]} T_{\ell};
\end{equation}
    \item For all $\ell \in \N_0$, we can write $\bdT_{\ell}$ as a union of $O_{\gamma}(1)$ direction-separated subfamilies. 
\end{enumerate}

To see Property (1), let $x \in E_{\ell}[T]$ so there exists some $s_x \in J(T)$ such that $|(x - x_T)\cdot \gamma^{(r)}(s_x)| \leq 2^{-r}R_{\ell}^{r/n}$ for $1 \leq r \leq n$. By the definition of $\cJ_{\ell}(T)$, there exists some $s \in \cJ_{\ell}(T)$ such that $|s_x - s| \leq c_{\gamma}R_{\ell}^{-1/n}$. By Taylor's theorem,
\begin{equation*}
  |(x-x_T) \cdot \gamma^{(r)}(s)| \leq \sum_{j = r}^{n-1} |(x-x_T) \cdot \gamma^{(j)}(s_x)||s-s_x|^{j-r} + O_{\gamma}\big(R_{\ell}|s - s_x|^{n-r}\big),  
\end{equation*}
where the error term is controlled by using the nondegeneracy condition to bound
\begin{equation*}
    |x-x_T| \sim_{\gamma} \sum_{j=1}^n |(x-x_T) \cdot \gamma^{(j)}(s_x)| \lesssim R_{\ell}.
\end{equation*}
It therefore follows that 
\begin{equation*}
|(x-x_T) \cdot \gamma^{(r)}(s)| \leq R_{\ell}^{r/n}(1/2 + O_{\gamma}(c_{\gamma})) \qquad \textrm{for $1 \leq r \leq n$.}    
\end{equation*}
Let $T_{\ell} \in \bdT_{\ell}[T]$ be the $(R_{\ell}, R_{\ell}^{-(n-1)/n})$-plate with centre $x_{T_{\ell}} \coloneq x_T$ and direction $s_{T_{\ell}} \coloneq s$. Provided $c_{\gamma} > 0$ is chosen sufficiently small, it follows from the above that $x \in T_{\ell}$. This establishes \eqref{eq: Kak vs rest 2}.

On the other hand, property (2) follows immediately from the direction-separation of $\bdT$ and the above definition. In particular, the direction separation of the tubes $\bdT$ implies that the intervals $\{ J(T) : T \in \bdT \}$ are almost-disjoint. In addition, for each $T \in \bdT$, the set $\bdT_{\ell}[T]$ contains one plate for each direction in $\cJ_\ell(T)$, and $\cJ_\ell(T)$ can be written  as a union of $O_\gamma(1)$ sets which are $\delta_{\ell}^{1/(n-1)}$ separated. 

For each $T \in \bdT$, square both sides of \eqref{eq: Kak vs rest 1}. If $x \in E_{\ell}[T]$ for some $\ell \in \N_0$, then the resulting right-hand side is bounded by $R^{1/n}(2^{\ell}R)^{-2/n} \sim_{\gamma} 2^{-\ell/n} R_{\ell}^{-1/n}$. Combining this observation with \eqref{eq: Kak vs rest 2}, we deduce that
\begin{equation}\label{eq: Kak vs rest 3}
    |\nu_T||\cE_{\gamma} g_T(x)|^2 \lesssim \sum_{\ell = 0}^{\infty} 2^{-\ell/n} R_{\ell}^{-1/n} \sum_{T_{\ell} \in \bdT_{\ell}[T]} |\nu_{T_{\ell}}| \, \chi_{T_{\ell}}(x),
\end{equation}
where we define $\nu_{T_{\ell}} \coloneq \nu_T$ for all $T_{\ell} \in \bdT_{\ell}[T]$. 

Let $\T_{\ell} \coloneq \{R_{\ell}^{-1} \cdot T : T \in \bdT_{\ell}\}$ be the set of $\delta_{\ell}$-plates formed by applying the dilation map $x \mapsto R_{\ell}^{-1} x$ to each $T \in \bdT_{\ell}$. We then define $\nu_{T_{\ell}} \coloneq \nu_{R_{\ell} \cdot T_{\ell}}$ for all $T_{\ell} \in \T_{\ell}$. Summing \eqref{eq: Kak vs rest 3} over $T \in \bdT$ and rescaling, we conclude that
\begin{equation*}
    \sum_{T \in \bdT} |\nu_T||\cE_{\gamma} g_T(x)|^2 \lesssim \sum_{\ell = 0}^{\infty} 2^{-\ell/n} R_{\ell}^{-1/n} \sum_{T_{\ell} \in \T_{\ell}} |\nu_{T_{\ell}}|\, \chi_{T_{\ell}}(R_{\ell}^{-1} x).
\end{equation*}
Consequently, with $s \coloneq q/2$, we have
\begin{equation}\label{eq: Kak vs rest 4}
        \left\| \sum_{T \in \bdT}\nu_T |\cE_{\gamma} g_T|^2 \right\|_{L^{q/2}(\R^n)} \lesssim \sum_{\ell = 0}^{\infty} 2^{-\ell/n} R_{\ell}^{-1/n} R_{\ell}^{n/s} \left\|\sum_{T \in \T_{\ell}}|\nu_{T_{\ell}}|\, \chi_{T_{\ell}} \right\|_{L^s(\R^n)}.
\end{equation}
By property $(2)$ above, $\T_{\ell}$ can be written as a union of $O_{\gamma}(1)$ direction-separated families of $\delta_{\ell}$-plates. Proposition~\ref{prop: Kakeya} implies that for each $\ell \in \N_0$, we have
\[ \left\|\sum_{T \in \T_{\ell}}|\nu_{T_{\ell}}|\, \chi_{T_{\ell}} \right\|_{L^s(\R^n)} \lesssim R_\ell^{ - (\frac{n-1}{n}) e(n;s)} \left( \sum_{T \in \T_\ell} |\nu_T|^{s/n} \right)^{n/s}. \]
We may use this together with the triangle inequality to bound the norms on the right-hand side of \eqref{eq: Kak vs rest 4}. A simple computation show that the resulting $R_{\ell}$ exponent satisfies
\begin{equation*}
    -\frac{1}{n} + \frac{n}{s} - \frac{n-1}{n}\cdot e(n;s) = 0.
\end{equation*}
On the other hand, 
\begin{equation*}
    \sum_{T_{\ell} \in \T_{\ell}} |\nu_{T_{\ell}}|^{s/n} = \sum_{T \in \bdT} \sum_{T_{\ell} \in \bdT_{\ell}[T]} |\nu_{T_{\ell}}|^{s/n} = \sum_{T \in \bdT} |\nu_T|^{s/n} \#\bdT_{\ell}[T] \lesssim_{\gamma} 2^{\ell/n}\sum_{T \in \bdT} |\nu_T|^{s/n} . 
\end{equation*}
Thus, altogether we get 
\begin{equation*}
    \left\|\sum_{T \in \bdT}\nu_T |\cE_{\gamma} g_T|^2 \right\|_{L^{q/2}(\R^n)} \lesssim_{\gamma} \sum_{\ell = 0}^{\infty} 2^{-\ell(1/n - 1/s)} \left( \sum_{T \in \bdT} |\nu_T|^{q/2n} \right)^{2n/q}.
\end{equation*}
Since $s > n$, the above geometric series is summable. 
\end{proof}

%%%%%%%%%%%%%%%%%%%%%%%%%%%%%%%%%%%%%%%%%%%%%%%%%%%%%%%%%%%%%%%%%%%%%%%%%%%%%%%%%%%%%%%%%%%%%%%%

%   Preliminaries for the proof of the Kakeya-type estimate

%%%%%%%%%%%%%%%%%%%%%%%%%%%%%%%%%%%%%%%%%%%%%%%%%%%%%%%%%%%%%%%%%%%%%%%%%%%%%%%%%%%%%%%%%%%%%%%%

\subsection{Preliminaries for the proof of the Kakeya-type estimate} 

It remains to prove Proposition~\ref{prop: Kakeya}. In this subsection, we present some preliminaries for the argument. In \S\ref{subsec: combining} below, we combine the various ingredients discussed here with a Bourgain--Guth~\cite{BG2011} iteration scheme to complete the proof. 

%%%%%%%%%%%%%%%%%%%%%%%%%%%%%%%%%%%%%%%%%%%%%%%%%%%%%%%%%%%%%%%%%%%%%%%%%%%%%%%%%%%%%%%%%%%%%%%%

%   Quantified nondegeneracy 

%%%%%%%%%%%%%%%%%%%%%%%%%%%%%%%%%%%%%%%%%%%%%%%%%%%%%%%%%%%%%%%%%%%%%%%%%%%%%%%%%%%%%%%%%%%%%%%%

\subsubsection*{Quantified nondegeneracy} For technical reasons, it is convenient to work with a weakly quantified version of the nondegeneracy condition. Recall the class of smooth, non-degenerate curves $\fkG(B)$ for $B \geq 1$ from \S\ref{sub:preliminaries}. In what follows, we prove estimates which are uniform over all curves $\gamma \in \fkG(B)$ for some fixed $B \geq 1$.

%%%%%%%%%%%%%%%%%%%%%%%%%%%%%%%%%%%%%%%%%%%%%%%%%%%%%%%%%%%%%%%%%%%%%%%%%%%%%%%%%%%%%%%%%%%%%%%%

%   Multilinear estimates

%%%%%%%%%%%%%%%%%%%%%%%%%%%%%%%%%%%%%%%%%%%%%%%%%%%%%%%%%%%%%%%%%%%%%%%%%%%%%%%%%%%%%%%%%%%%%%%%

\subsubsection*{Multilinear estimates} We begin by proving an elementary $n$-linear variant of Proposition~\ref{prop: Kakeya}, under a transversality hypothesis. Let $0 < \delta \leq \nu \leq 1$ and $\T_1, \dots, \T_n$ be families of $\delta$-plates associated to $\gamma$. We say that $\T_1, \dots, \T_n$ are $\nu$-\textit{transversal} if
\begin{equation}\label{eq: trans}
    \big|\det\begin{pmatrix}
    \gamma'(s_{T_1}) & \cdots & \gamma'(s_{T_n}) 
    \end{pmatrix}
    \big| \geq  \nu \quad \text{for all $T_\ell \in \T_\ell$, $1 \leq \ell \leq n$.}
\end{equation}
With this definition, our multilinear estimate reads thus. 

\begin{proposition}\label{prop: n linear} For $0 < \delta \leq \nu \leq 1$ and $n \leq s \leq \infty$, the inequality
\begin{equation}\label{eq: n linear}
    \left\| \prod_{\ell=1}^n \bigg|\sum_{T_{\ell} \in \T_{\ell}} \chi_{T_{\ell}} \bigg|^{1/n} \right\|_{L^s(\R^n)} \lesssim \nu^{-1/s} \delta^{e(n;s)}[\#\T]^{n/s}
\end{equation}
    holds whenever $\T$ is a family of essentially distinct $\delta$-plates associated to $\gamma$ and $\T_1, \dots, \T_n \subseteq \T$ are $\nu$-transversal.
\end{proposition}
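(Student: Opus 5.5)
The plan is to prove \eqref{eq: n linear} at the two endpoints $s = n$ and $s = \infty$ and then interpolate via H\"older's inequality (log-convexity of $L^s$ norms). Write $F \coloneq \prod_{\ell=1}^n |\sum_{T_\ell \in \T_\ell}\chi_{T_\ell}|^{1/n}$. Note that $e(n;n) = 1$ and $e(n;\infty) = -\frac{1}{n-1}$. The two target estimates are therefore
\begin{equation*}
\|F\|_{L^n(\R^n)} \lesssim \nu^{-1/n}\delta\,[\#\T] \qquad\text{and}\qquad \|F\|_{L^\infty(\R^n)} \lesssim \delta^{-\frac{1}{n-1}}.
\end{equation*}
Given these, $\|F\|_{L^s} \leq \|F\|_{L^n}^{n/s}\|F\|_{L^\infty}^{1-n/s}$ yields the factors $\nu^{-1/s}$ and $[\#\T]^{n/s}$. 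The resulting power of $\delta$ is $\frac{n}{s} - \frac{1}{n-1}(1 - \frac{n}{s}) = \frac{n^2}{(n-1)s} - \frac{1}{n-1} = e(n;s)$, as required.

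\emph{The $L^n$ endpoint.} Expanding the product gives
\begin{equation*}
\|F\|_{L^n}^n = \sum_{T_1 \in \T_1}\cdots\sum_{T_n\in\T_n} |T_1 \cap \cdots \cap T_n|.
\end{equation*}
Taking only the $r=1$ constraint in the definition of a $\delta$-plate, each $T_\ell$ lies in the slab $\{x : |(x - x_{T_\ell})\cdot\gamma'(s_{T_\ell})| < \delta\}$. The intersection of $n$ such slabs is a parallelepiped, the preimage of a cube of side $2\delta$ under the linear map with rows $\gamma'(s_{T_\ell})$. Its volume is therefore $\lesssim \delta^n |\det(\gamma'(s_{T_1}),\dots,\gamma'(s_{T_n}))|^{-1} \leq \delta^n\nu^{-1}$ by \eqref{eq: trans}. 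The sum has at most $[\#\T]^n$ terms, which gives the $L^n$ bound.

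\emph{The $L^\infty$ endpoint; main obstacle.} Since $\T_\ell \subseteq \T$, we have $\|F\|_\infty \leq \sup_x \#\{T \in \T : x \in T\}$. The task is to show that at most $O_B(\delta^{-1/(n-1)})$ essentially distinct plates contain a given point $x$. I would partition $[-1,1]$ into $O(\delta^{-1/(n-1)})$ intervals of length $\delta^{1/(n-1)}$. It then suffices to show that only $O_B(1)$ plates with directions in a single such interval can contain $x$; this is the step requiring care.

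Take two such plates $T_1, T_2 \ni x$ with $|s_{T_1} - s_{T_2}| \leq \delta^{1/(n-1)}$. Taylor expanding $\gamma^{(r)}(s_{T_2})$ about $s_{T_1}$ and using $|x - x_{T_2}| \lesssim 1$, I would show that $|(x - x_{T_2})\cdot\gamma^{(r)}(s_{T_1})| \lesssim_B \delta^{\frac{n-r}{n-1}}$ for each $r$. Hence every such centre $x_T$ lies in a fixed dilate $C_B \cdot T_x$ of the plate $T_x$ centred at $x$ with direction $s_{T_1}$. Essential distinctness (case ii) says that distinct centres are separated by more than $1$ in the anisotropic norm $\max_r \delta^{-\frac{n-r}{n-1}}|(\cdot)\cdot\gamma^{(r)}(s_{T_i})|$. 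Up to $O_B(1)$ distortion from switching between $s_{T_1}$ and $s_{T_2}$ (again controlled by Taylor's theorem), this norm is comparable to the one defined by $s_{T_1}$. A volume-packing argument in these rescaled coordinates then bounds the number of such centres by $O_B(1)$. The delicate point is making this comparability of norms uniform; if it proves awkward, I would first split each direction interval into $O_B(1)$ subintervals of length $c_B\delta^{1/(n-1)}$ so that the distortion constant is at most $2$.
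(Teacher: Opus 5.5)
Your proposal is correct and follows the paper's proof. It uses the same $s=n$ and $s=\infty$ endpoints, the same slab-intersection bound $\lesssim \nu^{-1}\delta^n$ for the $L^n$ estimate, and log-convexity interpolation, and your $L^\infty$ argument is a reproof of the paper's Lemma~\ref{lem: bdd mult}. The only cosmetic difference is that the paper packs the centres using $d(x,y) = \max_{s \in I}\max_{r}\delta^{-\frac{n-r}{n-1}}|(x-y)\cdot\gamma^{(r)}(s)|$, which makes the separation from essential distinctness immediate and sidesteps the norm comparability you flag; that comparability in any case holds with $O_B(1)$ constants, so no further subdivision is needed, since packing only requires separation $\gtrsim_B 1$.
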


We remark that, using the nondegeneracy condition on $\gamma$, we can express the determinant in \eqref{eq: trans} in a convenient alternative form. In particular, suppose $\gamma \in \fkG(B)$
for some $B \geq 2$. It follows from \cite[Proposition 4.1]{GGPRY2021} that there exists some $0 < \rho_B \leq 2$, depending only on $B$, such that
\begin{equation}\label{eq: non-deg trans}
   \big|\det\big(
    \gamma'(s_1) \, \cdots \, \gamma'(s_n) 
    \big)
    \big| \sim \prod_{1 \leq i < j \leq n} |s_i - s_j| \quad \textrm{for $\displaystyle \max_{1 \leq i < j \leq n} |s_i - s_j| < \rho_B$.} 
\end{equation}
Hence, Proposition~\ref{prop: n linear} applies in situations where the directions between typical $n$-tuples of intersecting plates are well-separated. 

To prove Proposition~\ref{prop: n linear}, we make use of the following elementary observation. 

\begin{lemma}\label{lem: bdd mult} Let $0 < \delta < 1$ and $\T$ be a family of essentially distinct $\delta$-plates associated to $\gamma$. Then 
\begin{equation*}
    \bigg\| \sum_{T \in \T} \chi_T \bigg\|_{L^{\infty}(\R^n)} \lesssim \delta^{-\frac{1}{n-1}}. 
\end{equation*}
\end{lemma}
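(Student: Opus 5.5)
Fix $x \in \R^n$ and count the plates $T \in \T$ with $x \in T$. The plan is to show that the directions of such plates are essentially $\delta^{1/(n-1)}$-separated, and that only $O(1)$ of them can share any given $\delta^{1/(n-1)}$-scale direction window. Since the directions $s_T$ range over $[-1,1]$, this gives at most $O(\delta^{-1/(n-1)})$ plates through $x$, which is the claimed bound.

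First, partition $[-1,1]$ into intervals of length $\delta^{1/(n-1)}$, and fix one such interval $I$. Consider the plates $T \ni x$ with $s_T \in I$. For two such plates $T_1, T_2$ we have $|s_{T_1}-s_{T_2}| \le \delta^{1/(n-1)}$, so essential distinctness must hold via condition ii). That condition gives
\[
\max_r \delta^{-\frac{n-r}{n-1}}|(x_{T_1}-x_{T_2})\cdot\gamma^{(r)}(s_{T_1})| > 1 .
\]
On the other hand, $x \in T_1 \cap T_2$ gives $|(x-x_{T_i})\cdot \gamma^{(r)}(s_{T_i})| < \delta^{\frac{n-r}{n-1}}$ for each $i$ and $r$. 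The task is to transfer the bound for $T_2$ from direction $s_{T_2}$ to direction $s_{T_1}$.

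Do this by Taylor expansion, exactly as in the proof of property (1) in the reduction above. Write $y = x - x_{T_2}$ and $h = s_{T_1}-s_{T_2}$, with $|h| \le \delta^{1/(n-1)}$. Then
\[
|y\cdot\gamma^{(r)}(s_{T_1})| \le \sum_{j=r}^{n-1} |y\cdot\gamma^{(j)}(s_{T_2})|\,|h|^{j-r} + O\big(|y|\,|h|^{n-r}\big).
\]
Each term of the sum is at most $\delta^{\frac{n-j}{n-1}}\delta^{\frac{j-r}{n-1}} = \delta^{\frac{n-r}{n-1}}$. For the error term, nondegeneracy \eqref{eq: non-deg 1} gives $|y| \lesssim 1$, so it is $O(\delta^{\frac{n-r}{n-1}})$ as well. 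Combining this with the estimate for $T_1$, we obtain $|(x_{T_1}-x_{T_2})\cdot\gamma^{(r)}(s_{T_1})| \le A\,\delta^{\frac{n-r}{n-1}}$ for all $r$, with $A = O_B(1)$.

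It remains to count: essential distinctness says these plates are ``$1$-separated'' in the normalised coordinates, yet all lie within $A$ of each other in those same coordinates. I would turn this into a bounded count by a volume/packing argument at a fixed reference direction $s_0 \in I$. The map $z \mapsto (\delta^{-\frac{n-r}{n-1}} z\cdot\gamma^{(r)}(s_0))_r$ is a linear change of variables. By the same Taylor argument, all the centres $x_T$ land in a ball of radius $O(1)$ in these normalised coordinates. Pairwise, the differences have sup-norm $\gtrsim 1$ when measured at $s_{T_i}$. The main obstacle is that the separation is measured at varying directions $s_{T_i}$ rather than at $s_0$. To handle it, I would note that the Taylor transfer above is invertible with $O(1)$ loss, after first shrinking the window $I$ to length $c\,\delta^{1/(n-1)}$ with $c$ small, which costs only a constant factor in the number of windows. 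With that adjustment the separation becomes $\gtrsim 1$ at $s_0$, and packing gives $O(1)$ plates per window. If the constants do not cooperate, the fallback is to bound the count directly by a volume comparison of the $O(1)$-dilated plates.

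Summing the $O(1)$ bound over the $O(\delta^{-1/(n-1)})$ windows completes the proof.
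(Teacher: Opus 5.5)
Your proposal is correct and follows essentially the same route as the paper: fix $x$, split the directions into windows of length $\delta^{1/(n-1)}$, use condition ii) of essential distinctness to separate the centres, use Taylor expansion to bound their spread by $O(1)$, and conclude with a packing argument giving $O(1)$ plates per window.

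The only difference is how the varying directions are handled. The paper sidesteps your ``main obstacle'' by using the norm $d(y,z)=\max_{s\in I}\max_{r}\delta^{-\frac{n-r}{n-1}}|(y-z)\cdot\gamma^{(r)}(s)|$, for which the separation is automatic because $s_{T_i}\in I$. Your fixed-$s_0$ norm is comparable to this $d$ by the same Taylor estimate, if you bound the remainder using $|w|\lesssim\max_r|w\cdot\gamma^{(r)}(s_0)|$; this makes the comparison multiplicative, so shrinking the window is unnecessary.
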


\begin{proof} Fix $x \in \R^n$ and let $\cI$ be a boundedly overlapping cover of $[-1,1]$ by closed intervals of length $\delta^{1/(n-1)}$, so that $\#\cI \lesssim \delta^{-1/(n-1)}$. It then follows that
\begin{equation*}
   \sum_{T \in \T} \chi_T(x) \lesssim  \delta^{- \frac{1}{n-1}}\, \max_{I \in \cI} \# \T[x; I] \quad \textrm{where} \quad \T[x; I] \coloneq \{T \in \T : x \in T,\, s_T \in I\}. 
\end{equation*}
It remains to show $\#\T[x; I] \lesssim 1$ for all $I \in \cI$. Fix $I \in \cI$ with $\T[x; I] \neq \varnothing$ and define a metric $d$ on $\R^n$ by
\begin{equation*}
    d(x,y) \coloneq \max_{s \in I} \max_{1 \leq r \leq n} \delta^{- \frac{n-r}{n-1}}|(x - y) \cdot \gamma^{(r)}(s)|, \qquad x,\,y \in \R^n.
\end{equation*}
Note that, because the metric balls $B_d(x,\rho)$ defined with respect to the metric $d$ satisfy $B_d(x,\rho) = \rho \cdot B_d(x,1)$, the metric is doubling, in the sense that $|B_d(x, 2\rho)| \lesssim_{\gamma} |B_d(x, \rho)|$ for all $x \in \R^n$ and $\rho > 0$. Suppose $T_1$, $T_2 \in \T[x; I]$ and $T_1 \neq T_2$. Then $|s_{T_1} - s_{T_2}| \leq \delta^{1/(n-1)}$. Since the $\delta$-plates in $\T$ are essentially distinct, it follows that $d(x_{T_1}, x_{T_2}) \geq 1$. In other words, the set of centres
\begin{equation*}
    \cC[x;I] \coloneq \big\{ x_T \in \R^n : T \in \T[x; I] \big\}
\end{equation*}
is $1$-separated and $T \mapsto x_T$ is a bijection from $\T[x;I]$ to $\cC[x;I]$. 

On the other hand, writing $x_i \coloneq x_{T_i}$ and $s_i \coloneq s_{T_i}$ for $i \in \{1,2\}$, for $1 \leq r \leq n$ we have
\begin{equation*}
    |(x_1 - x_2) \cdot \gamma^{(r)}(s)| \leq \sum_{i = 1}^2 |(x_i - x) \cdot \gamma^{(r)}(s_i)| + \big|(x_i - x) \cdot \big(\gamma^{(r)}(s) - \gamma^{(r)}(s_i)\big)\big|. 
\end{equation*}
Since $x \in T_i$, we have $|(x_i - x) \cdot \gamma^{(r)}(s_i)| < \delta^{\frac{n-r}{n-1}}$. On the other hand, by Taylor expansion, 
\begin{align*}
    \big|(x_i - x) \cdot \big(\gamma^{(r)}(s) - \gamma^{(r)}(s_i)\big)\big| &\lesssim_{B} \sum_{j = r+1}^{n-1} |(x_i - x)\cdot\gamma^{(j)}(s_i)| |s-s_i|^{j-r} + |s-s_i|^{n-r}\\
    &\lesssim \delta^{\frac{n-r}{n-1}},
\end{align*}
where we again use $x \in T_i$ and also $|s - s_i| \leq \delta^{1/(n-1)}$. Thus, we see that the diameter of $\cC[x; I]$ with respect to the $d$ metric is $O_{B}(1)$. The family of balls $\{ B_d(x,1/2): x \in \cC[x;I] \}$ are thus disjoint, yet contained in $B_d(x,O_{B}(1))$ for each $x \in \cC[x,I]$; the doubling property of the metric implies the size of the balls $B_d(\,\cdot\,,1/2)$ and $B_d(\,\cdot\,,O(1))$ are comparable, hence $\#\cT[x;I] =\#\cC[x;I] \lesssim_{\gamma} 1$, as required. 
\end{proof}

We now turn to the proof of the multilinear estimate. 

\begin{proof}[Proof (of Proposition~\ref{prop: n linear})] For $n \leq s \leq \infty$ and for $\nu$-transversal $\T_1, \dots, \T_n$ subsets of a family $\T$ of essentially distinct $\delta$-plates, we shall show
\begin{equation}\label{eq: n linear 1}
    \left\| \prod_{\ell=1}^n \bigg|\sum_{T_{\ell} \in \T_{\ell}} \chi_{T_{\ell}} \bigg|^{1/n} \right\|_{L^n(\R^n)} \lesssim \nu^{-1/n} \delta\, \#\T
\end{equation}
and
\begin{equation}\label{eq: n linear 2}
    \left\| \prod_{\ell=1}^n \bigg|\sum_{T_{\ell} \in \T_{\ell}} \chi_{T_{\ell}}\bigg|^{1/n}\right\|_{L^{\infty}(\R^n)} \lesssim \delta^{- \frac{1}{n-1}}, 
\end{equation}
corresponding to the $s = n$ and $s = \infty$ cases of \eqref{eq: n linear}, respectively. Once this is established, the desired conclusion follows by interpolation via logarithmic convexity property of $L^p$. Furthermore, the inequality \eqref{eq: n linear 2} is a trivial consequence of Lemma~\ref{lem: bdd mult}. Thus, the problem is further reduced to proving \eqref{eq: n linear 1} only. 

Given $T \in \T$, we let $S_T$ denote the slab
\begin{equation*}
    S_T \coloneq  \big\{ x \in \R^n : \big|(x-x_T) \cdot \gamma^{(1)}(s_T)\big| < \delta\big\}
\end{equation*}
formed by relaxing all but the first inequality in the definition of $T$. The transversality hypothesis guarantees that
    \begin{equation*}
        \left| \bigcap_{\ell = 1}^n T_{\ell} \right| \leq \left| \bigcap_{\ell = 1}^n S_{T_{\ell}} \right| \sim \nu^{-1} \delta^n \qquad \textrm{for $T_\ell \in \T_\ell$,  $1 \leq \ell \leq n$.}
    \end{equation*}
   Thus, by expanding out the $L^n(\R^n)$-norm, we conclude that
    \begin{equation*}
        \left\| \prod_{\ell=1}^n  \bigg| \sum_{T_\ell \in \T_{\ell}} \chi_{T_\ell} \bigg|^{1/n} \right\|_{L^n(\R^n)}^n = \sum_{T_1 \in \T_1} \cdots \sum_{T_n \in \T_n} \left| \bigcap_{\ell = 1}^n T_{\ell} \right| \lesssim \nu^{-1} \delta^n \prod_{\ell=1}^n \# \T_\ell.
    \end{equation*}
    Bounding $\#\T_{\ell} \leq \#\T$ for all $1 \leq \ell \leq n$ and taking the $n$th root then yields \eqref{eq: n linear 1}.
\end{proof}

%%%%%%%%%%%%%%%%%%%%%%%%%%%%%%%%%%%%%%%%%%%%%%%%%%%%%%%%%%%%%%%%%%%%%%%%%%%%%%%%%%%%%%%%%%%%%%%%

%   Angular vs spatial localisation 

%%%%%%%%%%%%%%%%%%%%%%%%%%%%%%%%%%%%%%%%%%%%%%%%%%%%%%%%%%%%%%%%%%%%%%%%%%%%%%%%%%%%%%%%%%%%%%%%

\subsubsection*{Angular vs spatial localisation } Let $B \geq 1$ and $\gamma \in \fkG(B)$. Further, let $0 < \delta \leq \eta \leq 1$ and $I \subseteq [-1,1]$ be an interval of radius $\eta^{1/(n-1)}$ centred at $s_I \in [-1,1]$. For any family $\T$ of $\delta$-plates associated to $\gamma$, define
\begin{equation*}
    \T[I] \coloneq \{ T \in \T : s_T \in I \}. 
\end{equation*}
We further define $\bbP[I]$ to be a family of $\eta$-plates, each with direction $s_I$, tiling $\R^n$ with disjoint interiors. We further decompose
\begin{equation*}
   \T[I] = \bigcup_{P \in \bbP[I]} \T[I; P] \qquad \textrm{where} \qquad  \T[I; P] \coloneq \{ T \in \T[I] : T \cap P \neq \emptyset\}.
\end{equation*}
Finally, given $C > 0$ and any $P \in \bbP[I]$, we let $C \cdot P$ denote the central dilate of $P$ by a factor of $C$; that is, for $s_P = s_I$, we have
\begin{equation*}
    C \cdot P \coloneq \big\{ x \in \R^n : |(x-x_P) \cdot \gamma^{(r)}(s_P)| \leq C \eta^{ \frac{n-r}{n-1}} \textrm{ for $1 \leq r \leq n$}\big\}.
\end{equation*}
With this definition, we have the following lemma, which tells us that the plates in $\T[I]$ naturally partition into non-interacting subfamilies. 

\begin{lemma}\label{lem: Kakeya orthog} With the above setup, there exists a constant $C_B \geq 1$ depending only on $B \geq 1$, such that if $T \in \T[I; P]$ , then $T \subseteq C_B \cdot P$. 
\end{lemma}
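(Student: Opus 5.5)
The plan is to show that every point $y \in T$ satisfies $|(y - x_P)\cdot\gamma^{(r)}(s_I)| \lesssim_B \eta^{\frac{n-r}{n-1}}$ for $1 \leq r \leq n$. Since $T \cap P \neq \emptyset$, fix $z \in T \cap P$ and split $y - x_P = (y - z) + (z - x_P)$. The term $z - x_P$ is controlled by the definition of $P$: $|(z-x_P)\cdot\gamma^{(r)}(s_I)| < \eta^{\frac{n-r}{n-1}}$. So everything reduces to a containment statement: a $\delta$-plate $T$ with direction $s_T \in I$ is contained in a translate of an $O_B(1)$-dilate of an $\eta$-plate with direction $s_I$. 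Concretely, we need
\[
|w\cdot\gamma^{(r)}(s_I)| \lesssim_B \eta^{\frac{n-r}{n-1}} \qquad \textrm{whenever } |w\cdot\gamma^{(j)}(s_T)| < 2\delta^{\frac{n-j}{n-1}} \textrm{ for } 1 \leq j \leq n,
\]
applied with $w = y - z$. The hypothesis holds for this $w$ by the triangle inequality through $x_T$.

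To verify the containment, Taylor expand $\gamma^{(r)}$ about $s_T$ to order $n-r$, exactly as in the proof of Lemma~\ref{lem: bdd mult}:
\[
w\cdot\gamma^{(r)}(s_I) = \sum_{j=r}^{n-1} \frac{(s_I - s_T)^{j-r}}{(j-r)!}\, w\cdot\gamma^{(j)}(s_T) + O_B\big(|w|\,|s_I - s_T|^{n-r}\big).
\]
We have $|s_I - s_T| \leq \eta^{1/(n-1)}$ since $I$ has radius $\eta^{1/(n-1)}$. Also $\delta \leq \eta$, so each main term is bounded by
\[
\delta^{\frac{n-j}{n-1}}\eta^{\frac{j-r}{n-1}} \leq \eta^{\frac{n-j}{n-1}}\eta^{\frac{j-r}{n-1}} = \eta^{\frac{n-r}{n-1}}.
\]
For the remainder we need $|w| \lesssim_B 1$. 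This follows from the second condition in \eqref{eq: non-deg 1} at $t = s_T$: $|w| \leq B\max_j |w\cdot\gamma^{(j)}(s_T)| \leq 2B$, because every $\delta^{\frac{n-j}{n-1}} \leq 1$. The remainder is therefore $O_B(\eta^{\frac{n-r}{n-1}})$. Uniformity of the constants in $\gamma$ comes from \eqref{eq: non-deg 2}, whose derivative bounds up to order $2n$ cover the $(n+1)$st derivatives appearing here.

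The only point requiring any care is this remainder term, specifically the case $r = n$ where the expansion has no main terms. There the claim is simply $|w\cdot\gamma^{(n)}(s_I)| \lesssim |w| \lesssim_B 1 = \eta^0$, which is immediate. Collecting the bounds gives $y \in C_B \cdot P$ with $C_B = 1 + O_B(1)$, depending only on $B$ and $n$.
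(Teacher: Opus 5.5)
Your argument is correct and is essentially the paper's proof. The paper likewise Taylor-expands $\gamma^{(r)}$ about $s_T$, using $|s_T - s_P|\le\eta^{1/(n-1)}$, $\delta\le\eta$ and the bound on $\gamma^{(n)}$, to show $|(z-x_T)\cdot\gamma^{(r)}(s_P)|\lesssim_B\eta^{\frac{n-r}{n-1}}$ for $z\in T$, and then concludes by the triangle inequality through a point of $T\cap P$. The only differences are cosmetic: you merge the two $T$-terms into the single vector $w=y-z$, and you make explicit the bound $|w|\lesssim_B 1$ from \eqref{eq: non-deg 1}, which the paper's remainder estimate uses implicitly.
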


For the constant $C_B \geq 1$ guaranteed by Lemma~\ref{lem: Kakeya orthog}, we let $\overline{P} \coloneq C_B \cdot P$ whenever $P$ is an $\eta$-plate associated to $\gamma \in \fkG(B)$ for some $0 < \eta < 1$.

\begin{proof}[Proof (of Lemma~\ref{lem: Kakeya orthog})] Let $0 < \delta \leq \eta \leq 1$ and $I \subseteq [-1,1]$ be an interval of radius $\eta^{1/(n-1)}$. Fix $P \in \bbP[I]$, $T \in \T[I; P]$ and $z \in T$. The key observation is that
\begin{equation}\label{eq: Kakeya orthog 1}
    |(z - x_T) \cdot \gamma^{(r)}(s_P)| \lesssim_B \eta^{ \frac{n-r}{n-1}} \qquad \textrm{for $1 \leq r \leq n$.}
\end{equation}
The case $r = n$ of \eqref{eq: Kakeya orthog 1} is immediate. For the remaining cases,  note that the conditions $P \in \bbP[I]$ and $T \in \T[I;P]$ imply that 
\begin{equation*}
|s_T - s_P| \leq \eta^{\frac{1}{n-1}} \qquad \textrm{and} \qquad |(z - x_T) \cdot \gamma^{(r)}(s_T)| \leq \eta^{ \frac{n-r}{n-1} } \qquad \textrm{for $1 \leq r \leq n$.}   
\end{equation*}
It therefore follows by Taylor's theorem, using the upper bound on the $n$th derivative from \eqref{eq: non-deg 2}, that 
\begin{align*}
  |(z - x_T) \cdot \gamma^{(r)}(s_P)| &\leq \sum_{\ell = r}^{n-1} |(z - x_T) \cdot \gamma^{(r)}(s_T)|\frac{|s_P - s_T|^{r - \ell}}{(r - \ell)!} + O_B\big(|s_P - s_T|^{n-r}\big) \\
 & \lesssim_B \eta^{\frac{n-r}{n-1}}
\end{align*}
for $1 \leq r \leq n-1$, which establishes the remaining cases of \eqref{eq: Kakeya orthog 1}.

Since $T \in \T[I; P]$, we have $T \cap P \neq \emptyset$, and so we know there exists some $y \in T \cap P$. Given any $x \in T$, for $1 \leq r \leq n$ we obtain the bound
\begin{equation}\label{eq: Kakeya orthog 2}
\begin{split}
    |(x-x_P)\cdot \gamma^{(r)}(s_P)| &\leq |(x-x_T)\cdot \gamma^{(r)}(s_P)|\\
    &\quad\quad + |(y-x_T)\cdot \gamma^{(r)}(s_P)| + |(y-x_P)\cdot \gamma^{(r)}(s_P)|.
\end{split}
\end{equation}
By applying \eqref{eq: Kakeya orthog 1} with $z = x \in T$ and $z = y \in T$, we see that first two terms on the right-hand side of \eqref{eq: Kakeya orthog 2} are both $O_{\gamma}(\eta^{(n-r)/(n-1)})$. On the other hand, since $y \in P$, the third term on the right-hand side of \eqref{eq: Kakeya orthog 2} is $O(\eta^{(n-r)/(n-1)})$. Thus, 
\begin{equation*}
   |(x-x_P)\cdot \gamma^{(r)}(s_P)| \lesssim_B \eta^{\frac{n-r}{n-1}} \qquad \textrm{for all $1 \leq r \leq n$,} 
\end{equation*}
which means $x \in C_B \cdot P$ for a suitable choice of constant $C_B \geq 1$, depending only on $B$. 
\end{proof}

%%%%%%%%%%%%%%%%%%%%%%%%%%%%%%%%%%%%%%%%%%%%%%%%%%%%%%%%%%%%%%%%%%%%%%%%%%%%%%%%%%%%%%%%%%%%%%%%

%   Rescaling

%%%%%%%%%%%%%%%%%%%%%%%%%%%%%%%%%%%%%%%%%%%%%%%%%%%%%%%%%%%%%%%%%%%%%%%%%%%%%%%%%%%%%%%%%%%%%%%%

\subsubsection*{Rescaling} Continuing with the above setup, fix an $\eta$-plate $P \in \bbP[I]$ associated to $\gamma \in \fkG(B)$. The dilate $\overline{P}$ may be expressed as
\begin{equation*}
  \overline{P} =  \big\{ x \in \R^n : |\bdA_P(x)|_{\infty} \leq C_B \big\}
\end{equation*}
for a suitable choice of affine map $\bdA_P \colon \R^n \to \R^n$. More precisely, 
\begin{equation}\label{eq: rescaling 1}
    \bdA_P(x) \coloneq \big(\bdD_P\big)^{-1} \circ \big([\gamma]_P\big)^{\top} (x - x_P) \qquad \textrm{for all $x \in \R^n$,}
\end{equation}
where $\bdD_P$ is the $n \times n$ diagonal matrix which acts on the coordinate vectors so that $\bdD_P e_r = \eta^{(n-r)/(n-1)} e_r$ for $1 \leq r \leq n$, and 
\begin{equation*}
    [\gamma]_P \coloneq 
    \big(
        \gamma^{(1)}(s_P) \, \cdots \, \gamma^{(n)}(s_P)
    \big).
\end{equation*}
We further define the rescaled curve $\gamma_P \colon [-2,2] \to \R^n$ associated to $P$ by 
\begin{equation*}%\label{eq: rescaling 2}
    \gamma_P(s) \coloneq \eta^{- \frac{n}{n-1}} \bdD_P \circ \big([\gamma]_P\big)^{-1} \left(\gamma \big(s_P + \eta^{ \frac{1}{n-1} }s \big) - \gamma(s_P)\right) \quad s \in [-2,2].
\end{equation*}
This rescaled curve is also nondegenerate.

\begin{lemma}\label{lem: resc curve} With the above setup, there exists a constant $c_B  >0$, depending only on $B$, such that $\gamma_P \in \fkG(2)$ whenever $0 < \eta < c_B$.
\end{lemma}

Versions of this lemma have appeared many times in the literature; see, for instance, \cite[Lemma 2.1]{HL2014}.

%We include a proof for completeness, and because it is relatively simple.

%\begin{proof}[Proof (of Lemma~\ref{lem: resc curve})] Let $\gamma_{\circ} \colon \R \to \R^n$ denote the moment curve
%\begin{equation}\label{eq: resc curve 1}
%    \gamma_{\circ}(s) \coloneq \Big(s, \frac{s^2}{2}, \dots, \frac{s^n}{n!} \Big), \qquad s \in \R. 
%\end{equation}
%By Taylor's theorem, there exists a smooth function $\cE_P \colon [-1,1] \to \R^n$ such that
%\begin{equation}\label{eq: resc curve 2}
%    \gamma \big(s_P + \eta^{\frac{1}{n-1}}s \big) - \gamma(s_P) = \eta^{\frac{n}{n-1}} [\gamma]_P \circ \bdD_P \gamma_{\circ}(s) + \cE_P(s) 
%\end{equation}
%and
%\begin{equation}\label{eq: resc curve 3}
%    |\cE_P^{(r)}(s)|_{\infty} \leq \frac{B}{(n-r+1)!} \eta^{\frac{n+1}{n-1}} \qquad \textrm{for all $s \in [-1,1]$.}
%\end{equation}
%Note that 
%\begin{equation*}
%  \|\big([\gamma]_P\big)^{-1}\|_{\ell^{\infty}(\txM_n(\R))} \leq B^n,
%\end{equation*}
%where the above norm on the space $\txM_n(\R)$ of $n \times n$ matrices is given by taking the maximum absolute value of the entries. In light of this, \eqref{eq: resc curve 1} and the definition of $\gamma_P$ from \eqref{eq: rescaling 2} and \eqref{eq: resc curve 2} and \eqref{eq: resc curve 3}, we deduce that
%\begin{equation*}
%  |\gamma_P^{(r)}(s) - \gamma_{\circ}^{(r)}(s)|_{\infty} \leq n B^{n+1}\eta^{\frac{1}{n-1}}.
%\end{equation*}
%Since $\gamma_{\circ} \in \fkG(1)$, the desired result now follows. 
%\end{proof}

\begin{lemma}\label{lem: rescaling} With the above setup, for $P \in \bbP[I]$ and $T \in \T[I; P]$, let $\wt{T}$ denote the $\delta/\eta$-plate associated to $\gamma_P$ centred at $x_{\wt{T}} \coloneq \bdA_P(x_T)$ with direction $s_{\wt{T}}$ satisfying
\[ s_T = s_P + \eta^{\frac{1}{n-1}}s_{\wt{T}}. \]
Then $\bdA_P(T) = \wt{T}$. 
\end{lemma}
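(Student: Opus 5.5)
The plan is a direct change of variables: I will show that $x \in T$ if and only if $\bdA_P(x) \in \wt{T}$. Since $\bdA_P$ is an invertible affine map (both $\bdD_P$ and $[\gamma]_P$ are invertible, the latter by non-degeneracy), this gives $\bdA_P(T) = \wt{T}$.

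First I check that $\wt{T}$ is a legitimate $\delta/\eta$-plate associated to $\gamma_P$. Since $T \in \T[I;P]$, we have $s_T \in I$, and $I$ has radius $\eta^{1/(n-1)}$ about $s_I = s_P$. Hence $s_{\wt{T}} = \eta^{-1/(n-1)}(s_T - s_P) \in [-1,1]$. Also $\delta/\eta \in (0,1]$ because $\delta \leq \eta$.

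The key computation is the chain rule for the rescaled curve. Differentiating the definition of $\gamma_P$ $r$ times gives
\begin{equation*}
\gamma_P^{(r)}(s_{\wt{T}}) = \eta^{-\frac{n-r}{n-1}}\, \bdD_P\, ([\gamma]_P)^{-1} \gamma^{(r)}(s_T), \qquad 1 \leq r \leq n,
\end{equation*}
using $s_T = s_P + \eta^{1/(n-1)}s_{\wt{T}}$.

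Now fix $x \in \R^n$ and set $y \coloneq \bdA_P(x)$. Since $x_{\wt{T}} = \bdA_P(x_T)$, inverting \eqref{eq: rescaling 1} gives
\begin{equation*}
x - x_T = \big(([\gamma]_P)^{\top}\big)^{-1}\bdD_P\,(y - x_{\wt{T}}).
\end{equation*}
Taking the inner product with $\gamma_P^{(r)}(s_{\wt{T}})$ and using that $\bdD_P$ is diagonal, hence symmetric, I will transpose the matrices across the inner product to obtain
\begin{equation*}
(y - x_{\wt{T}})\cdot \gamma_P^{(r)}(s_{\wt{T}}) = \eta^{-\frac{n-r}{n-1}}\,(x - x_T)\cdot\gamma^{(r)}(s_T).
\end{equation*}
Therefore the defining condition $|(x-x_T)\cdot\gamma^{(r)}(s_T)| < \delta^{\frac{n-r}{n-1}}$ for $T$ is equivalent to
\begin{equation*}
|(y-x_{\wt{T}})\cdot\gamma_P^{(r)}(s_{\wt{T}})| < (\delta/\eta)^{\frac{n-r}{n-1}},
\end{equation*}
for each $1 \leq r \leq n$. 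This is exactly the defining condition for $\wt{T}$, with $R = 1$ and $\delta/\eta$ in place of $\delta$.

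There is no real obstacle. The only points needing care are the bookkeeping of the powers of $\eta$ in the chain rule, where $\eta^{-n/(n-1)}\eta^{r/(n-1)} = \eta^{-(n-r)/(n-1)}$, and the transpose manipulation. Notably, the argument does not need Lemma~\ref{lem: resc curve} or Lemma~\ref{lem: Kakeya orthog}; those results matter only when the rescaled plates are used later.
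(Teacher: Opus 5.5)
Your proof is correct and is essentially the paper's argument. Your chain-rule formula for $\gamma_P^{(r)}(s_{\wt{T}})$ is exactly the paper's identity $[\gamma_P]_{\wt{T}} = \bdD_P([\gamma]_P)^{-1}[\gamma]_T(\bdD_P)^{-1}$; the only difference is that the paper pushes the parametrisation $x = x_T + ([\gamma]_T)^{-\top}\bdD_T y$, $y \in [-1,1]^n$, through $\bdA_P$, whereas you transpose across the defining inner products. Your version has the minor advantage of showing transparently that the diagonal scalings combine as $(\bdD_P)^{-1}\bdD_T$ to give the $\delta/\eta$-plate scaling, which is the step where the paper's final display misplaces the factor $(\bdD_P)^{-1}$.
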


\begin{proof} Under the hypotheses of the lemma, a simple computation shows that 
\begin{equation}\label{eq: rescaling 3}
    [\gamma_P]_{\wt{T}} = \bdD_P \circ \big([\gamma]_P\big)^{-1} \circ [\gamma]_T \circ (\bdD_P)^{-1}
\end{equation}

The elements $x \in T$ are precisely those of the form 
\begin{equation*}
    x = x_T + \big([\gamma]_T\big)^{-\top} \bdD_T \,y \qquad \textrm{for some $y \in [-1,1]^n$.}
\end{equation*}
Thus, applying $\bdA_P$, we see that $x \in T$ if and only if 
\begin{equation*}
    \bdA_P(x) = \bdA_P(x_T) + \big(\bdD_P\big)^{-1} \circ \big([\gamma]_P\big)^{\top} \circ \big([\gamma]_T\big)^{-\top} \circ \bdD_T \,y \quad \textrm{for some $y \in [-1,1]^n$.}
\end{equation*}
Using \eqref{eq: rescaling 3}, we may write
\begin{align*}
   \big(\bdD_P\big)^{-1} \circ\big([\gamma]_P\big)^{\top} \circ \big([\gamma]_T\big)^{-\top} &= \Big(\big(\bdD_P\big) \circ\big([\gamma]_P\big)^{-1} \circ [\gamma]_T  \circ \big(\bdD_P\big)^{-1} \Big)^{-\top} \circ \big(\bdD_P\big)^{-1} \\
   &= \big( [\gamma_P]_{\wt{T}}  \big)^{-\top} \circ \big(\bdD_P\big)^{-1}
\end{align*}
Thus, combining the previous two displays, we deduce that $x \in T$ if and only if 
\begin{equation*}
    \bdA_P(x) = \bdA_P(x_T) + \big(\bdD_P\big)^{-1} \big( [\gamma_P]_{\wt{T}}  \big)^{-\top} \circ \bdD_{\wt{T}} \,y \quad \textrm{for some $y \in [-1,1]^n$.}
\end{equation*}
In particular, $x \in T$ if and only if $\bdA_P(x) \in \wt{T}$, as required. 
\end{proof}

%%%%%%%%%%%%%%%%%%%%%%%%%%%%%%%%%%%%%%%%%%%%%%%%%%%%%%%%%%%%%%%%%%%%%%%%%%%%%%%%%%%%%%%%%%%%%%%%

%   The proof of the Kakeya-type estimate

%%%%%%%%%%%%%%%%%%%%%%%%%%%%%%%%%%%%%%%%%%%%%%%%%%%%%%%%%%%%%%%%%%%%%%%%%%%%%%%%%%%%%%%%%%%%%%%%

\subsection{The proof of the Kakeya-type estimate}  We are now ready to prove Proposition~\ref{prop: Kakeya}, following a standard broad-narrow argument as per \cite{BG2011, HL2014}. 

\begin{proof}[Proof (of Proposition~\ref{prop: Kakeya})] Fix $B \geq 2$ and $s > s_n$ and let $\bdC_{B, s} \geq 1$ be a constant, depending only on $B$, $n$ and $s$, chosen sufficiently large so as to satisfy the requirements of the forthcoming argument. The goal is to show 
\begin{equation}\label{eq: b/n 0}
    \bigg\|\sum_{T \in \T} \chi_{T} \bigg\|_{L^s(\R^n)} \leq \bdC_{B, s} \delta^{e(n;s)} \big[ \# \T\big]^{n/s}
\end{equation}
holds whenever $\gamma \in \fkG(B)$ and $\T$ is a family of essentially distinct $\delta$-plates associated to~$\gamma$. The desired result (which involves coefficients $(\nu_T)_{T \in \T}$) then follows from restricted weak-type interpolation, using the fact we are working with an open range of estimates. 

We argue by induction on $\delta$. For $1 \geq \delta > 1/100$, provided $\bdC_{B, s}$ is chosen sufficiently large, the desired bound \eqref{eq: b/n 0} trivially holds. This serves as the base case for the induction.

Fix $0 < \delta \leq 1/100$ and assume the following. \medskip

\noindent \textbf{Induction hypothesis.} For all $2\delta \leq \tilde{\delta} \leq 1$, the inequality
\begin{equation}\label{eq: ind hyp}
    \bigg\|\sum_{\wt{T} \in \wt{\T}} \chi_{\wt{T}} \bigg\|_{L^s(\R^n)} \leq \bdC_{B, s} \tilde{\delta}^{e(n;s)} \big[ \# \wt{\T}\big]^{n/s}
\end{equation}
holds for $\tilde{\gamma} \in \fkG(B)$ and a family $\wt{\T}$ of essentially distinct $\tilde{\delta}$-plates associated to $\tilde{\gamma}$.\medskip 

Fix $\gamma \in \fkG(B)$ and a direction-separated family of $\delta$-plates $\T$ associated to $\gamma$. Let $0 < \rho_B < 1$ be the constant appearing in \eqref{eq: non-deg trans}. By pigeonholing, we may find some interval $I_{\circ} \subseteq [-1,1]$ of radius $\rho_B/2$ such that 
\begin{equation}\label{eq: b/n 0.5}
    \bigg\|\sum_{T \in \T} \chi_{T} \bigg\|_{L^s(\R^n)} \lesssim_B \bigg\|\sum_{T \in \T[I_{\circ}]} \chi_{T} \bigg\|_{L^s(\R^n)}.   
\end{equation}

Now let $0 < \eta < \min\{c_B, \rho_B/2\}$, where $0 < c_B < 1$ is the constant with the property guaranteed by Lemma~\ref{lem: resc curve}. Let $\cI(\eta)$ be a boundedly overlapping cover of $I_{\circ}$ by intervals of length $\eta^{1/(n-1)}$ and define
\begin{equation*}
    \cI_{\mathrm{broad}}^n(\eta) \coloneq \left\{ (I_1, \dots, I_n) \in \cI(\eta)^n : \min_{1 \leq j < k \leq n} \dist(I_j, I_k) > \eta^{\frac{1}{n-1}} \right\}. 
\end{equation*}
We apply a standard broad-narrow decomposition (see, for instance, \cite[Lemma 2.8]{HL2014}) to pointwise dominate
\begin{align}
\nonumber
    \sum_{T \in \T[I_{\circ}]} \chi_T(x) &\leq \sum_{I \in \cI(\eta)} \sum_{T \in \T[I]} \chi_T(x) \\ 
    & \lesssim \left(\sum_{I \in \cI(\eta)} \bigg| \sum_{T \in \T[I]} \chi_T(x)\bigg|^s\right)^{1/s} \nonumber\\
    &\quad\quad\quad\quad\quad + \sum_{(I_1, \dots, I_n) \in \cI_{\mathrm{broad}}^n(\eta) } \prod_{\ell = 1}^n \bigg| \sum_{T_{\ell} \in \T[I_{\ell}]} \chi_{T_{\ell}}(x) \bigg|^{1/n} \label{eq: b/n 1}.
\end{align}

If $(I_1, \dots, I_n) \in \cI_{\mathrm{broad}}^n(\eta)$, then it follows from \eqref{eq: non-deg trans} that $\T[I_1], \dots, \T[I_n] \subseteq \T$ are $\Omega(\eta^{n/2})$-transversal. Thus, by the multilinear estimate from Proposition~\ref{prop: n linear}, we have
\begin{equation}\label{eq: b/n 2}
    \bigg\| \prod_{\ell=1}^n \bigg|\sum_{T_{\ell} \in \T_{\ell}} \chi_{T_{\ell}}\bigg|^{1/n}\bigg\|_{L^s(\R^n)} \lesssim \eta^{- \frac{n}{2s}} \delta^{e(n; s)}[\#\T]^{n/s}.
\end{equation}
Summing over all $(I_1, \dots, I_n) \in \cI_{\mathrm{broad}}^n(\eta)$, we deduce that there exists some $E_n > 0$ such that
\begin{equation*}
    \sum_{(I_1, \dots, I_n) \in \cI_{\mathrm{broad}}^n(\eta) }\bigg\| \prod_{\ell=1}^n \bigg|\sum_{T_{\ell} \in \T_{\ell}} \chi_{T_{\ell}}\bigg|^{1/n}\bigg\|_{L^s(\R^n)} \lesssim \eta^{-E_n} \delta^{e(n; s)}[\#\T]^{n/s}.
\end{equation*}
This provides a bound for the contributions arising from the second term on the right-hand side of \eqref{eq: b/n 1}. 

We now turn to estimating the contributions arising from the first term on the right-hand side of \eqref{eq: b/n 1}. Here we use a combination of rescaling and the induction hypothesis. First note that, by Lemma~\ref{lem: Kakeya orthog} and the bounded overlap between parallel $\overline{P}$, we have
\begin{equation}\label{eq: b/n 3}
   \bigg\|\sum_{T \in \T[I]} \chi_T \bigg\|_{L^s(\R^n)} \lesssim_B  \left(\sum_{P \in \bbP[I]} \bigg\|\sum_{T \in \T[I; P]} \chi_T \bigg\|_{L^s(\overline{P})}^s\right)^{1/s}.
\end{equation}
Fix $P \in \bbP[I]$ and let $\tilde{\delta} \coloneq \delta/\eta$ so that $2 \delta \leq \tilde{\delta} \leq 1$. To estimate the summand in \eqref{eq: b/n 3} corresponding to a fixed $P$, we apply a change of variables via the affine transformation $\bdA_P$ from \eqref{eq: rescaling 1}. In particular, in view of Lemma~\ref{lem: rescaling}, we deduce that
\begin{equation*}
    \bigg\|\sum_{T \in \T[I; P]} \chi_T \bigg\|_{L^s(\overline{P})} \lesssim_B \eta^{\frac{n}{2s}} \bigg\|\sum_{\wt{T} \in \wt{\T}[I; P]} \chi_{\wt{T}} \bigg\|_{L^s(\R^n)}
\end{equation*}
where $\wt{\T}$ is a direction-separated family of $\tilde{\delta}$-plates associated to some non-degenerate curve $\tilde{\gamma}$. Furthermore, by Lemma~\ref{lem: resc curve}, we know $\tilde{\gamma} \in \fkG(2) \subseteq \fkG(B)$. Thus, by the induction hypothesis \eqref{eq: ind hyp}, we have
\begin{align}
\nonumber
    \bigg\|\sum_{T \in \T[I; P]} \chi_T \bigg\|_{L^s(\overline{P})} &\lesssim_B \bdC_{B, s} \eta^{n/(2s)} \tilde{\delta}^{e(n;s)} \big[\#\wt{\T}[I; P]\big]^{n/s} \\
    \label{eq: b/n 4}
    &= \eta^{\varepsilon(s)} \bdC_{B, s}  \delta^{e(n;s)} \big[\# \T[I; P]\big]^{n/s}
\end{align}
where
\begin{equation*}
    \varepsilon(s) \coloneq \frac{n}{2}\cdot \frac{1}{s} - e(n;s) = \frac{1}{n-1} \Big( 1 - \frac{n(n+1)}{2} \cdot \frac{1}{s} \Big) > 0 \quad \textrm{for} \quad s > \frac{n(n+1)}{2}.
\end{equation*}

Recalling \eqref{eq: b/n 3}, if we take the $\ell^s$-norm of both sides of \eqref{eq: b/n 4} with respect to both $P \in \bbP[I]$ and $I \in \cI(\eta)$, it then follows that
\begin{align}
\nonumber
    \left(\sum_{I \in \cI(\eta)} \Big\|\sum_{T \in \T[I]} \chi_T \Big\|_{L^s(\R^n)}^s \right)^{1/s} &\lesssim_B \eta^{\varepsilon(s)} \bdC_{B, s}  \delta^{e(n;s)} \bigg( \sum_{I \in \cI(\eta)} \sum_{P \in \bbP[I]} \big[\# \T[I; P]\big]^n \bigg)^{1/s} \\
     \label{eq: b/n 5}
     &\lesssim_B \eta^{\varepsilon(s)} \bdC_{B, s}  \delta^{e(n;s)} [\#\T]^{n/s},
\end{align}
where the last step follows from the nesting of $\ell^p$.

We now take the $L^s(\R^n)$-norm of both sides of \eqref{eq: b/n 1}, apply the triangle inequality and use \eqref{eq: b/n 5} and \eqref{eq: b/n 2} to bound the resulting expressions. It then follows that there exists a constant $C_B \geq 1$, depending only on $B \geq 1$, such that
\begin{equation}\label{eq: b/n 6}
    \bigg\|\sum_{T \in \T[I_{\circ}]} \chi_T \bigg\|_{L^s(\R^n)} \leq C_B \big( \eta^{\varepsilon(s)}\bdC_{B, s} + \eta^{-E_n} \big) \delta^{e(n;s)} [\#\T]^{n/s}. 
\end{equation}
In the above argument, we are free to choose $0 < \eta < \min\{c_B, \rho_B/2\}$. In particular, we may ensure $\eta$ is sufficiently small, depending only on $B$ and $s$, so as to satisfy $C_B \eta^{\varepsilon(s)} \leq 1/2$. On the other hand, we may also ensure $\bdC_{B, s} \geq 1$ is chosen sufficiently large, again depending only on $B$ and $s$, so that $C_B \eta^{-E_n} \leq \bdC_{B, s}/2$. With these choices, recalling \eqref{eq: b/n 0.5}, the bound \eqref{eq: b/n 6} implies
\begin{equation*}
    \bigg\|\sum_{T \in \T} \chi_T \bigg\|_{L^s(\R^n)} \leq \bdC_{B, s} \delta^{e(n;s)} [\#\T]^{n/s}, 
\end{equation*}
which closes the induction and completes the proof. 
\end{proof}

%%%%%%%%%%%%%%%%%%%%%%%%%%%%%%%%%%%%%%%%%%%%%%%%%%%%%%%%%%%%%%%%%%%%%%%%%%%%%%%%%%%%%%%%%%%%%%%%

%                                         Comments and further results

%%%%%%%%%%%%%%%%%%%%%%%%%%%%%%%%%%%%%%%%%%%%%%%%%%%%%%%%%%%%%%%%%%%%%%%%%%%%%%%%%%%%%%%%%%%%%%%%

\subsection{Comments and further results} Although Proposition~\ref{prop: Kakeya} is stated for essentially disjoint plates, Theorem~\ref{thm: square fn} is only stated for wave packets associated to the more restrictive class of direction-separated plates. This is simply because wave packets associated to essentially disjoint plates do not, in general, form orthonormal systems. For $n = 2$, they do form `almost' orthonormal systems, which allows one to prove a variant of Theorem~\ref{thm: square fn} for essentially disjoint plates in this case. However, the almost orthonormal property no longer holds in higher dimensions. 
 
 It is also natural to consider the geometric inequality in the situation where the essentially distinct condition is strengthened to direction-separation. Under this hypothesis, a larger range of estimates holds. 

\begin{proposition}\label{prop: true Kakeya} Let $1 \leq r \leq s \leq \infty$ satisfy $\frac{n(n-1)}{2} \frac{1}{s} > 1 - \frac{1}{r}$. Then the inequality

\begin{equation*}
    \bigg\|\sum_{T \in \T} \chi_T \bigg\|_{L^s(\R^n)} \lesssim_{\gamma, r, s} \delta^{n/2s} \big[ \# \T\big]^{1/r}
\end{equation*}
whenever $\T$ is a direction-separated family of $\delta$-plates associated to $\gamma$. 
\end{proposition}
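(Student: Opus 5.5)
The plan is to run the broad/narrow induction on scales from the proof of Proposition~\ref{prop: Kakeya} once more. This time the input is the direction-separation hypothesis, which gives the extra counting bound $\#\T \lesssim \delta^{-1/(n-1)}$ and the same bound for every rescaled subfamily. Two preliminary observations. First, a single $\delta$-plate has volume $\sim \delta^{\sum_{r=1}^n \frac{n-r}{n-1}} = \delta^{n/2}$, so $\|\chi_T\|_{L^s(\R^n)} \sim \delta^{n/2s}$; hence the case $r = 1$ is just the triangle inequality, and $s=\infty$ forces $r=1$. Second, the exponent pairs compatible with a bound of the form $\delta^{n/2s}[\#\T]^{1/r}$ interpolate by log-convexity of $L^p$ norms, linearly in $(1/s,1/r)$. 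Since $N \coloneq \#\T \geq 1$, it therefore suffices to treat pairs $(r,s)$ with $1/r$ arbitrarily close to the critical line $1/r = 1 - \frac{n(n-1)}{2}\frac{1}{s}$.

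For fixed such $(r,s)$, I will prove $\|\sum_{T\in\T}\chi_T\|_{L^s} \leq \bdC\, \delta^{n/2s} N^{1/r}$ by induction on $\delta$, uniformly over $\gamma \in \fkG(B)$. The first steps are exactly as before: pigeonhole to an interval $I_\circ$ of radius $\rho_B/2$, then apply the pointwise broad/narrow decomposition \eqref{eq: b/n 1} at scale $\eta$.

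\emph{Broad part.} I apply Proposition~\ref{prop: n linear}, which is valid since direction-separated plates are essentially distinct, and then trade powers of $N$ for powers of $\delta$ via $N \lesssim \delta^{-1/(n-1)}$. In the regime $n/s \geq 1/r$ this gives
\[
\delta^{e(n;s)} N^{n/s} \lesssim \delta^{n/2s} N^{1/r}\, \delta^{\kappa}, \qquad \kappa \coloneq \frac{n}{2s} - \frac{1}{n-1}\Big(1 - \frac1r\Big).
\]
Here $\kappa > 0$ is \emph{precisely} the hypothesis $\frac{n(n-1)}{2}\frac1s > 1 - \frac1r$. So the broad contribution is $\lesssim \eta^{-E_n}\delta^{\kappa}\,\delta^{n/2s}N^{1/r}$, which is harmless. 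This computation is what explains the shape of the stated range. In the complementary regime $n/s < 1/r$, I will instead interpolate the broad term between $L^n$, namely \eqref{eq: n linear 1}, and the trivial $L^1$ bound, aiming for the same saving.

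\emph{Narrow part.} I use Lemma~\ref{lem: Kakeya orthog} and Lemma~\ref{lem: rescaling} as in \eqref{eq: b/n 3}--\eqref{eq: b/n 4}. The rescaled families $\wt{\T}[I;P]$ are again direction-separated, now at scale $\tilde\delta = \delta/\eta$ for the curve $\gamma_P \in \fkG(2)$ given by Lemma~\ref{lem: resc curve}. The induction hypothesis then yields
\[
\eta^{n/2s}\,\tilde\delta^{\,n/2s}\, N_{I,P}^{1/r} = \delta^{n/2s} N_{I,P}^{1/r}, \qquad N_{I,P} \coloneq \#\T[I;P].
\]
Summing in $\ell^s$ over $(I,P)$ and using $s \geq r$ gives $\big(\sum N_{I,P}^{s/r}\big)^{1/s} \leq N^{1/r}$.

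The main obstacle is that, unlike in Proposition~\ref{prop: Kakeya}, there is no $\eta^{\varepsilon}$ gain in the narrow term: the scaling is exactly critical, so the induction does not close as written. To manufacture a gain I would try two things. First, I would exploit $s > r$: $\big(\sum N_{I,P}^{s/r}\big)^{1/s} \leq N^{1/r}\big(\max_{I,P} N_{I,P}/N\big)^{1/r - 1/s}$, which gives a gain unless a single $(I,P)$ carries almost all plates. Second, I would handle that concentrated case separately. There the plates have directions confined to one $\eta^{1/(n-1)}$-interval, so $N \lesssim (\eta/\delta)^{1/(n-1)}$, and the counting step of the broad part, applied at scale $\tilde\delta$, should then give a factor $\eta^{\kappa'}$ with $\kappa' > 0$. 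If this case split fails to close, my fallback is to induct on the stronger quantity $\delta^{n/2s}\,N^{1/r}\,(\delta^{1/(n-1)}N)^{\theta}$ for a small $\theta > 0$. Its extra factor decreases under the narrow rescaling, which should supply the missing power of $\eta$. At the end I would remove the extra factor by $\delta^{1/(n-1)}N \lesssim 1$, using the strict inequality in the hypothesis to absorb the change in exponents.
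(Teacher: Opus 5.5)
Your broad-part computation is correct, and it is the improvement the paper indicates: the counting bound $N=\#\T\lesssim\delta^{-1/(n-1)}$ turns the multilinear estimate into a saving $\delta^{\kappa}$, and $\kappa>0$ is exactly the hypothesis. You have also found the real difficulty, which the paper's sketch (``following through the proof of Proposition~\ref{prop: Kakeya}'') does not address. For the target $\delta^{n/2s}N^{1/r}$ the narrow step is exactly scale-invariant, so the constants lost in \eqref{eq: b/n 0.5}, \eqref{eq: b/n 1} and \eqref{eq: b/n 3} cannot be absorbed. Neither of your remedies closes the induction, so the proposal is not a proof.

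\emph{The fallback weight goes the wrong way.} If a single $(I,P)$ carries $N_{I,P}\approx N$ plates, then $(\tilde\delta^{1/(n-1)}N_{I,P})^{\theta}=\eta^{-\theta/(n-1)}(\delta^{1/(n-1)}N_{I,P})^{\theta}$. That is a loss exactly where you need a gain. With the opposite sign of $\theta$ the induction does close for $r<s$, since the broad saving $\delta^{\kappa}$ absorbs the weight. But it then proves only the weaker bound $\delta^{n/2s}N^{1/r}(\delta^{1/(n-1)}N)^{-\theta}$, and $\delta^{1/(n-1)}N\lesssim 1$ cannot remove the extra factor; for $N=O(1)$ it is a $\delta^{-\theta/(n-1)}$ loss.

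\emph{The case split fails too.} The $\ell^{s/r}$ inequality gains only when $\max N_{I,P}\leq c_0 N$ for a fixed $c_0$ dictated by the lost constants. So the complementary case is a fixed fraction of the plates on one $(I,P)$, not almost all of them, and for $r=s$ the inequality gains nothing. Even a fully concentrated family rescales to a direction-separated family at scale $\tilde\delta$, and $N\lesssim(\eta/\delta)^{1/(n-1)}$ is just the counting bound at that scale. The broad saving there controls only the broad part of the rescaled family, whose narrow part is critical again. Iterating therefore costs a constant per scale, i.e.\ a $\delta^{-\epsilon}$ loss.

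On the diagonal the obstruction is not merely technical: for $n=2$ the statement is false when $r=s\in(1,2)$, a pair the hypothesis allows. Take $\gamma(t)=(t,t^2/2)$; the $\delta$-plates are then $\delta\times 1$ tubes, and direction separation means $\delta$-separated slopes. A Besicovitch-type construction (e.g.\ a neighbourhood of a Besicovitch set, or a Perron tree) gives $N\sim\delta^{-1}$ such tubes whose union $E$ satisfies $|E|\to 0$; indeed $|E|\lesssim 1/\log(1/\delta)$. Since $\int\sum_T\chi_T\sim N\delta\sim 1$, H\"older gives $\|\sum_T\chi_T\|_{L^s}\gtrsim |E|^{-(1-1/s)}\to\infty$, while the claimed bound is $(\delta N)^{1/s}\sim 1$. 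So a correct argument must use $r<s$ essentially.

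For $r<s$, your sign-flipped bound does give the claim whenever $N\geq\delta^{-c}$, for any fixed $c>0$: apply it with $1/r$ replaced by $1/r-\epsilon'$ and let $N^{-\epsilon'}$ absorb the weight. The regime $N=\delta^{-o(1)}$ still needs an input whose loss is in $N$ rather than in $\delta$. For $n=2$ this is C\'ordoba's bound $\|\sum_T\chi_T\|_{L^2}^2\lesssim\delta N\log(2N)$. Nothing in your proposal, or in the broad/narrow scheme as written, supplies such an input.
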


Proposition~\ref{prop: true Kakeya} follows from a minor modification of the argument used to prove Proposition~\ref{prop: Kakeya}. Note that any direction-separated family $\T$ of $\delta$-plates satisfies $\# \T \lesssim \delta^{-1/(n-1)}$. This property yields an improved $L^\infty$ bound to that of Lemma~\ref{lem: bdd mult}, which yields an improved multilinear estimate when interpolated with the $L^n(\R^n)$ multilinear estimate in~\ref{prop: n linear}. Following through the proof of Proposition~\ref{prop: Kakeya} yields the claim. We leave the details to the interested reader.

%%%%%%%%%%%%%%%%%%%%%%%%%%%%%%%%%%%%%%%%%%%%%%%%%%%%%%%%%%%%%%%%%%%%%%%%%%%%%%%%%%%%%%%%%%%%%%%%

%                                          REFERENCES

%%%%%%%%%%%%%%%%%%%%%%%%%%%%%%%%%%%%%%%%%%%%%%%%%%%%%%%%%%%%%%%%%%%%%%%%%%%%%%%%%%%%%%%%%%%%%%%%

\bibliography{Reference}
\bibliographystyle{amsplain}

\end{document}